\DeclareMathOperator{\Char}{char}
\DeclareMathOperator{\Ext}{\mathrm{Ext}}
\DeclareMathOperator{\Gal}{Gal}
\DeclareMathOperator{\Hom}{Hom}
\DeclareMathOperator{\id}{id}
\DeclareMathOperator{\Ker}{Ker}
\DeclareMathOperator{\pr}{pr}
\DeclareMathOperator{\res}{res}
\DeclareMathOperator{\Skew}{Skew}
\DeclareMathOperator{\trg}{trg}
\begin{document}

\newtheorem{thm}{Theorem}[section]
\newtheorem{cor}[thm]{Corollary}
\newtheorem{lem}[thm]{Lemma}
\newtheorem{prop}[thm]{Proposition}
\newtheorem{defin}[thm]{Definition}
\newtheorem{exam}[thm]{Example}
\newtheorem{examples}[thm]{Examples}
\newtheorem{rem}[thm]{Remark}
\newtheorem{case}{\sl Case}
\newtheorem{claim}{Claim}
\newtheorem{prt}{Part}
\newtheorem*{mainthm}{Main Theorem}
\newtheorem*{thmA}{Theorem A}
\newtheorem*{thmB}{Theorem B}
\newtheorem{question}[thm]{Question}
\newtheorem*{notation}{Notation}
\swapnumbers
\newtheorem{rems}[thm]{Remarks}
\newtheorem*{acknowledgment}{Acknowledgment}

\newtheorem{questions}[thm]{Questions}
\numberwithin{equation}{section}

\newcommand{\dec}{\mathrm{dec}}
\newcommand{\dirlim}{\varinjlim}
\newcommand{\discup}{\ \ensuremath{\mathaccent\cdot\cup}}
\newcommand{\divis}{\mathrm{div}}
\newcommand{\nek}{,\ldots,}
\newcommand{\inv}{^{-1}}
\newcommand{\isom}{\cong}
\newcommand{\ndiv}{{\not|}}
\newcommand{\sep}{\mathrm{sep}}
\newcommand{\sym}{\mathrm{sym}}
\newcommand{\tagg}{^{''}}
\newcommand{\tensor}{\otimes}

\newcommand{\alp}{\alpha}
\newcommand{\gam}{\gamma}
\newcommand{\Gam}{\Gamma}
\newcommand{\del}{\delta}
\newcommand{\Del}{\Delta}
\newcommand{\eps}{\epsilon}
\newcommand{\lam}{\lambda}
\newcommand{\Lam}{\Lambda}
\newcommand{\sig}{\sigma}
\newcommand{\Sig}{\Sigma}
\newcommand{\bfA}{\mathbf{A}}
\newcommand{\bfB}{\mathbf{B}}
\newcommand{\bfC}{\mathbf{C}}
\newcommand{\bfF}{\mathbf{F}}
\newcommand{\bfP}{\mathbf{P}}
\newcommand{\bfQ}{\mathbf{Q}}
\newcommand{\bfR}{\mathbf{R}}
\newcommand{\bfS}{\mathbf{S}}
\newcommand{\bfT}{\mathbf{T}}
\newcommand{\bfZ}{\mathbf{Z}}
\newcommand{\dbA}{\mathbb{A}}
\newcommand{\dbC}{\mathbb{C}}
\newcommand{\dbF}{\mathbb{F}}
\newcommand{\dbN}{\mathbb{N}}
\newcommand{\dbQ}{\mathbb{Q}}
\newcommand{\dbR}{\mathbb{R}}
\newcommand{\dbZ}{\mathbb{Z}}
\newcommand{\grf}{\mathfrak{f}}
\newcommand{\gra}{\mathfrak{a}}
\newcommand{\grm}{\mathfrak{m}}
\newcommand{\grp}{\mathfrak{p}}
\newcommand{\grq}{\mathfrak{q}}
\newcommand{\calA}{\mathcal{A}}
\newcommand{\calC}{\mathcal{C}}
\newcommand{\calE}{\mathcal{E}}
\newcommand{\calG}{\mathcal{G}}
\newcommand{\calL}{\mathcal{L}}
\newcommand{\calW}{\mathcal{W}}
\newcommand{\calV}{\mathcal{V}}

\title[On the descending central sequence]{On the descending central sequence of absolute Galois groups}
\author{Ido Efrat}
\address{Mathematics Department\\
Ben-Gurion University of the Negev\\
P.O.\ Box 653, Be'er-Sheva 84105\\
Israel}
\email{efrat@math.bgu.ac.il}

\author{J\'an Min\'a\v c}
\address{Mathematics Department\\
University of Western Ontario\\
London\\
Ontario\\
Canada N6A 5B7}
\email{minac@uwo.ca}
\thanks{The first author was supported by the Israel Science Foundation (grant No.\ 23/09).
The second author was supported in part by  National Sciences and Engineering Council of Canada grant R0370A01.}

\begin{abstract}
Let $p$ be an odd prime number and $F$ a field containing a primitive $p$th root of unity.
We prove a new restriction on the group-theoretic structure of the absolute Galois group $G_F$ of $F$.
Namely, the third subgroup $G_F^{(3)}$ in the descending $p$-central sequence of $G_F$ is the
intersection of all open normal subgroups $N$ such that $G_F/N$ is $1$, $\dbZ/p^2$,
or the extra-special group $M_{p^3}$ of order $p^3$ and exponent $p^2$.
\end{abstract}

\keywords{descending central sequence, absolute Galois group, Galois cohomology, embedding problem,
$W$-group, Bockstein map}
\subjclass[2000]{Primary 12F10; Secondary 12G05, 12E30}

\maketitle

\section{Introduction}
Let $q=p^d$ be a prime power, and let $G$ be a profinite group.
The \textbf{descending $q$-central sequence} of $G$ is defined inductively by
\[
G^{(1)}=G,\quad
G^{(i+1)}=(G^{(i)})^q[G^{(i)},G],\quad i=1,2,\ldots \ .
\]
Thus $G^{(i+1)}$ is the closed subgroup of $G$ generated by all powers $h^q$ and all commutators $[h,g]=h\inv g\inv hg$,
where $h\in G^{(i)}$ and $g\in G$.

Now suppose that $q=p$.
Let $F$ be  a field containing a primitive $p$th root of unity $\zeta_p$,
and let $G=G_F$ be its absolute Galois group.
Let $M_{p^3}$ be the unique nonabelian group of order $p^3$
and exponent $p^2$ (see \S8).

\begin{mainthm}
For $p\neq2$ and for $G=G_F$ as above, $G^{(3)}$ is the intersection of all open normal subgroups
$N$ of $G$ such that $G/N$ is isomorphic to one of $1$, $\dbZ/p^2$, and $M_{p^3}$.
\end{mainthm}

Determining the profinite groups which are realizable as absolute Galois groups of fields
is a major open problem in Galois theory.
Our Main Theorem appears to be simple yet powerful restriction on the possible structure of such groups,
and on their quotients $G_F/G_F^{(3)}$.
These quotients an extremely important invariant of fields, carrying a substantial information about
their arithmetical structure.
For example, when $p=2$ it encodes the orderings and the Witt ring of quadratic forms
of $F$ (\cite{MinacSpira90}, \cite{MinacSpira96}) as well as some non-trivial valuations \cite{MaheMinacSmith04}.
Further, it encodes the entire mod $2$ Galois cohomology ring of $G_F$ \cite{AdemKaraMinac99}*{Th.\ 3.14}.
In a joint work with S.\ Chebolu \cite{CheboluEfratMinac}, we show that $G_F/G_F^{(3)}$ can be in fact thought of as a group-theoretic
analog of the Galois cohomology of $F$ for any $p$, and we use this close relationship between
the Galois cohomology of $F$ and $G_F/G_F^{(3)}$ to provide
new examples of profinite groups which are not realizable as absolute Galois groups of fields.

The analog of our Main Theorem for $p=2$ was discovered by Villegas in a different formalism \cite{Villegas88}.
The second author and Spira reformulated and reproved it in \cite{MinacSpira96}*{Cor.\ 2.18}
using the descending $2$-central sequence of $G_F$.
Namely, then  $G^{(3)}=G_F^{(3)}$ is the intersection of all open normal subgroups $N$ of $G$
such that $G/N$ is isomorphic to $1$, $\dbZ/2$, $\dbZ/4$, or to the dihedral group $D_4=M_8$ of order $8$.

A main difference between the case $p>2$ and the case $p=2$ is the existence in the former case of elements in
$H^2((\dbZ/p)^n,\dbZ/p)$ which are not expressible as sums of cup products of
elements in $H^1((\dbZ/p)^n,\dbZ/p)$.
To handle this new kind of element we study the Bockstein homomorphism $\beta_G\colon H^1(G,\dbZ/p)\to  H^2(G,\dbZ/p)$
and its relation to Galois theory.

Our approach is purely cohomological.
Thus we prove the Main Theorem more generally for profinite groups $G$ which satisfy two simple conditions
on their lower cohomology.
These conditions are known to hold for $G=G_F$, with $F$ as above, where they are consequences of the
following two Galois-theoretic facts (see \S3 for details and terminology):
\begin{enumerate}
\item[(i)]
the Galois symbol $K^M_2(F)/p\to H^2(G,\dbZ/p)$ is injective (it is actually bijective by the
Merkurjev--Suslin theorem, which is a special case of the Rost--Voevodsky's theorem); and
\item[(ii)]
$\beta_G$ is the cup product by the Kummer element $(\zeta_p)\in H^1(G,\dbZ/p)$.
\end{enumerate}

More generally, when $q=p^d$ is an arbitrary prime power and $F$ is a field containing a primitive $q$th
root of unity, we
characterize $G_F^{(3)}$ as the intersection of all open normal subgroups $N$ of $G_F$ such that $G_F/N$
belongs to a certain
cohomologically defined class of finite groups (Theorem \ref{formal main theorem}).
This is based on the natural generalizations of (i) and (ii) above, as well as the following additional
property of $G_F$:
\begin{enumerate}
\item[(iii)]
the map $H^1(G_F,\dbZ/q)\to H^1(G_F,\dbZ/p^i)$, $1\leq i\leq d$, is surjective.
\end{enumerate}

Our analysis applies also to $p=2$.
Thus we give a new cohomological proof of the above-mentioned result of \cite{Villegas88} and \cite{MinacSpira96}
and  generalize it to profinite groups $G$ satisfying the appropriate conditions on their lower cohomology.
We also show that the group $\dbZ/2$ can be omitted from the list unless $F$ is a
Euclidean field (Corollary \ref{short list p even}).

The paper is organized as follows:
In \S2 we collect various cohomological preliminaries, especially facts related to the Bockstein
homomorphism $\beta_G$ and its connections with roots of unity and cup products.
In \S3 we introduce the key notion of a profinite group of Galois relation type.
This notion axiomatizes the cohomological properties of absolute Galois groups that we need for our proofs ((i)--(iii) above).
In \S4 we define an abelian group $\Omega(G)$ and a homomorphism $\Lambda_G\colon \Omega(G)\to
H^2(G,\dbZ/q)$.
These extend the cup product map $\cup\colon H^1(G,\dbZ/q)^{\tensor2}\to H^2(G,\dbZ/q)$ but also take into account
the Galois-theoretic role of  $\beta_G$.
Our axioms on $G$ imply that $\Ker(\Lambda_G)$ is generated by elements of simple type
(Definition \ref{simple type} and Proposition \ref{decomposition}).
These simple type elements are in turn related to cohomologically defined open subgroups
$N$ of $G$ of index dividing $q^3$, which we call ``distinguished subgroups".
In \S5 we translate the above result about $\Ker(\Lam_G)$ to the language of distinguished subgroups,
and prove the crucial Theorem \ref{formal main theorem}:
for $G$ of Galois relation type, $G^{(3)}$ is the intersection of all distinguished subgroups of $G$.

In \S\S6--10 we build a ``dictionary" between the images under $\Lam_G$ of simple type elements of
$\Omega(G)$ and some special group extensions.
The solutions of the resulting embedding problems correspond to distinguished subgroups of $G$.
This is then used in \S11 to prove the Main Theorem and the analogous results for $p=2$ in the
general setting of profinite groups of Galois relation type.

In \S12 we study $G/G^{(3)}$ for $G$ of Galois relation type.
As a corollary we recover some known ``automatic realization" results in Galois theory.
Our approach seems to provide a good explanation why these curious automatic realization
results are true.
Finally, in \S13 we give examples showing that all the finite groups in our lists are
indeed necessary.

We are grateful to the referee for his/her detailed comments and valuable suggestions.
We also thank  P.\ Deligne, L.\ Moret-Baily,  A.\  Shalev, and T.\ Weigel for their interest in this work
and their comments related to talks given at the Israel Mathematical Union 2008 conference and
the 2008 conference on Profinite Groups in ESI, Vienna.
Finally, we thank Y.\ Tschinkel for his kind encouragement.

\section{cohomological preliminaries}
Let $p$ be a prime number, let $q=p^d$ be a power of $p$, and let $G$ be a profinite group.
We write $H^i(G)$ for the profinite cohomology group $H^i(G,\dbZ/q)$, where $G$ acts trivially on $\dbZ/q$.
Thus $H^1(G)=\Hom(G,\dbZ/q)$ consists of all continuous group homomorphisms $G\to\dbZ/q$.
We consider $H^*(G)=\bigoplus_{i=0}^\infty H^i(G)$ as a graded anti-commutative ring with respect
to the cup product $\cup$.

\subsection*{A)\ Normal Subgroups}
Let $N$ be a normal closed subgroup of $G$.
Then $G$ acts canonically on $H^i(N)$.
Denote the group of all $G$-invariant elements of $H^i(N)$ by $H^i(N)^G$.
For $i=1$ this action is given by $\varphi\mapsto\varphi^g$, where $\varphi^g(n)=\varphi(g\inv ng)$
for $g\in G$ and $n\in N$.
Thus $H^1(N)^G$ consists of all homomorphisms $\varphi\colon N\to\dbZ/q$ which are trivial on $N^q[N,G]$.

The next lemma provides a fundamental connection between the descending $q$-central sequence of $G$ and cohomology.

\begin{lem}
\label{Pontryagin}
For a normal closed subgroup $N$ of $G$ one has
\[
\bigcap\{\Ker(\varphi)\ |\ \varphi\in H^1(N)^G \}=N^q[N,G].
\]
\end{lem}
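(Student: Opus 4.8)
The plan is to prove the two inclusions separately; the inclusion $\supseteq$ is formal, and the content lies in $\subseteq$, which amounts to a separation-of-points (Pontryagin duality) argument.

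For $\supseteq$, recall from the paragraph preceding the statement that every $\varphi\in H^1(N)^G$ is trivial on $N^q[N,G]$; hence $N^q[N,G]\subseteq\Ker(\varphi)$ for each such $\varphi$, and intersecting over all $\varphi$ gives the desired inclusion.

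For $\subseteq$, write $M=N^q[N,G]$, understood as the \emph{closed} subgroup of $G$ topologically generated by the elements $n^q$ and the commutators $[n,g]$ with $n\in N$, $g\in G$, as in the definition of the descending $q$-central sequence. Then $M$ is a closed normal subgroup of $G$, so $\bar N:=N/M$ is a profinite group, and by construction it is abelian and annihilated by $q$, i.e.\ a profinite $\dbZ/q$-module. I would first identify $H^1(N)^G$ with the group of continuous homomorphisms $\bar N\to\dbZ/q$: one direction is the description recalled before the lemma, and for the other, if $\varphi\colon N\to\dbZ/q$ factors through $\bar N$ then for all $g\in G$, $n\in N$ one has $g\inv ng=n[n,g]$ with $[n,g]\in M$, whence $\varphi^g(n)=\varphi(n)+\varphi([n,g])=\varphi(n)$, so $\varphi\in H^1(N)^G$. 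With this identification it suffices to show that the continuous homomorphisms $\bar N\to\dbZ/q$ separate the points of $\bar N$; granting this, $\bigcap_\varphi\Ker(\varphi)$ has trivial image in $\bar N$, hence equals $M$, as required.

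To prove the separation, let $1\neq\bar n\in\bar N$ and choose an open subgroup $U\le\bar N$ with $\bar n\notin U$. Then $\bar N/U$ is a finite $\dbZ/q$-module in which the image of $\bar n$ is nonzero. Since $\dbZ/q=\dbZ/p^d$ is a self-injective ring containing a copy of $\dbZ/p$, it is an injective cogenerator in the category of finite $\dbZ/q$-modules; hence there is a homomorphism $\bar N/U\to\dbZ/q$ not annihilating the image of $\bar n$, and composing with the projection $\bar N\twoheadrightarrow\bar N/U$ yields the required character. (Equivalently, this is Pontryagin duality for profinite abelian groups of exponent dividing $q$, every $\dbQ/\dbZ$-valued character of such a group landing in $\frac1q\dbZ/\dbZ\isom\dbZ/q$; for $q=p$ it is just linear algebra over $\dbF_p$.) The only point requiring a little care is precisely this last separation statement when $q=p^d$ with $d>1$, where one must invoke the self-injectivity of $\dbZ/p^d$ rather than pure linear algebra; everything else is a direct unwinding of the definitions.
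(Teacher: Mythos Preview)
Your proof is correct and follows essentially the same approach as the paper: both set $\bar N=N/N^q[N,G]$, identify $H^1(N)^G$ with $\Hom_{\mathrm{cont}}(\bar N,\dbZ/q)$ via inflation, and then invoke Pontryagin duality to see that these characters separate points of $\bar N$. The only difference is that the paper cites Pontryagin duality directly, whereas you unpack the separation step via the self-injectivity of $\dbZ/q$; this is a matter of exposition, not method.
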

\begin{proof}
Consider the natural projection  $\pi\colon N\to\bar N=N/N^q[N,G]$.
The abelian torsion group $\bar N$ has Pontryagin dual $H^1(\bar N)$.
By the Pontryagin duality \cite{NeukirchSchmidtWingberg}*{Th.\ 1.1.11},
$\bigcap_{\bar\varphi\in H^1(\bar N)}\Ker(\bar\varphi)=\{0\}$, whence
 $\bigcap_{\bar\varphi\in H^1(\bar N)}\pi\inv(\Ker(\bar\varphi))=N^q[N,G]$.
Further, if $\bar\varphi\in H^1(\bar N)$ and $\varphi=\inf_N(\bar\varphi)$,
then $\Ker(\varphi)=\pi\inv(\Ker(\bar\varphi))$.
Finally, by the previous remarks, $\textstyle{\inf_N}\colon H^1(\bar N)\to H^1(N)^G$ is an isomorphism.
The assertion follows.
\end{proof}

\begin{cor}
There is a natural non-degenerate pairing
\[
N/N^q[N,G]\times H^1(N)^G\to\dbZ/q.
\]
\end{cor}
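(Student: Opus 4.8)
The plan is to take the pairing to be the tautological evaluation map and to read off its non-degeneracy from Lemma~\ref{Pontryagin} together with Pontryagin duality. Concretely, I would set
\[
\bigl(nN^q[N,G],\ \varphi\bigr)\ \longmapsto\ \varphi(n),\qquad n\in N,\ \varphi\in H^1(N)^G .
\]
The only thing to check for well-definedness is independence of the coset representative $n$, which is precisely the observation recorded just before Lemma~\ref{Pontryagin}: every $\varphi\in H^1(N)^G$ vanishes on $N^q[N,G]$. Bi-additivity is then immediate, and naturality in the pair $(G,N)$ is clear from the formula, inflation and restriction on $H^1$ being given by composition of homomorphisms.

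For non-degeneracy in the first variable, suppose $n\in N$ satisfies $\varphi(n)=0$ for every $\varphi\in H^1(N)^G$. Then $n$ lies in $\bigcap\{\Ker(\varphi)\mid \varphi\in H^1(N)^G\}$, which equals $N^q[N,G]$ by Lemma~\ref{Pontryagin}, so the class of $n$ in $N/N^q[N,G]$ is trivial.

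For non-degeneracy in the second variable I would push the pairing through $\bar N:=N/N^q[N,G]$ and use the isomorphism $\inf_N\colon H^1(\bar N)\to H^1(N)^G$ from the proof of Lemma~\ref{Pontryagin}. Under it the pairing becomes the canonical evaluation pairing $\bar N\times H^1(\bar N)\to\dbZ/q$ between the abelian torsion group $\bar N$ of exponent dividing $q$ and its Pontryagin dual $H^1(\bar N)=\Hom(\bar N,\dbZ/q)$, which is perfect by Pontryagin duality \cite{NeukirchSchmidtWingberg}*{Th.\ 1.1.8}. (The elementary half, that a $\varphi$ pairing trivially with all of $N$ is the zero homomorphism, is of course immediate on its own.)

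I do not anticipate a genuine obstacle: the substance is already contained in Lemma~\ref{Pontryagin} and in the quoted Pontryagin duality theorem, and what remains is bookkeeping. The one point deserving a line of care is to fix the intended meaning of ``non-degenerate'' in the profinite setting --- namely that the induced maps $N/N^q[N,G]\to\Hom(H^1(N)^G,\dbZ/q)$ and $H^1(N)^G\to\Hom(N/N^q[N,G],\dbZ/q)$ are injective, in fact topological isomorphisms since $\bar N$ is profinite and $H^1(N)^G$ discrete --- and to observe that the identification of our pairing with the canonical evaluation pairing is compatible with $\inf_N$ by the definition of inflation on $H^1$.
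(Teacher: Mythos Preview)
Your proposal is correct and matches the paper's intent: the corollary is stated there without proof, as an immediate consequence of Lemma~\ref{Pontryagin} and the Pontryagin duality already invoked in its proof, and your evaluation pairing together with the identification $\inf_N\colon H^1(\bar N)\xrightarrow{\sim} H^1(N)^G$ is exactly the argument the paper has in mind.
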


\begin{cor}
$G^{(i)}/G^{(i+1)}$ is dual to $H^1(G^{(i)})^G$ for $i\geq1$.
\end{cor}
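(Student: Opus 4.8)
The plan is simply to specialize the preceding corollary to the normal subgroup $N=G^{(i)}$. First I would record, by induction on $i$, that each $G^{(i)}$ is a closed normal subgroup of $G$: the base case $G^{(1)}=G$ is clear, and if $G^{(i)}$ is closed and normal, then both $(G^{(i)})^q$ and $[G^{(i)},G]$ are normal in $G$ — conjugating a power $h^q$ or a commutator $[h,g]$ by $g'\in G$ yields $({g'}^{-1}hg')^q$, resp.\ $[{g'}^{-1}hg',{g'}^{-1}gg']$, and ${g'}^{-1}hg'\in G^{(i)}$ since $G^{(i)}$ is normal — so their product $G^{(i+1)}$ is again a closed normal subgroup of $G$. (One also notes in passing that $G^{(i)}/G^{(i+1)}$ is abelian of exponent dividing $q$, since $[G^{(i)},G^{(i)}]\subseteq[G^{(i)},G]\subseteq G^{(i+1)}$ and $(G^{(i)})^q\subseteq G^{(i+1)}$.)

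With this in hand, the defining relation $G^{(i+1)}=(G^{(i)})^q[G^{(i)},G]$ identifies $N^q[N,G]$ with $G^{(i+1)}$, so the non-degenerate pairing of the previous corollary becomes a non-degenerate pairing
\[
G^{(i)}/G^{(i+1)}\times H^1(G^{(i)})^G\to\dbZ/q.
\]
It then remains to upgrade this to the assertion that $G^{(i)}/G^{(i+1)}$ is the (Pontryagin) dual of $H^1(G^{(i)})^G$. For this I would invoke the identification already established inside the proof of Lemma~\ref{Pontryagin}: writing $\bar N=N/N^q[N,G]=G^{(i)}/G^{(i+1)}$, inflation gives an isomorphism $H^1(\bar N)\to H^1(G^{(i)})^G$, and $H^1(\bar N)$ is by definition the Pontryagin dual of the profinite abelian group $\bar N$ of exponent dividing $q$. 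By reflexivity of Pontryagin duality for (locally compact, in particular profinite) abelian groups, $\bar N=G^{(i)}/G^{(i+1)}$ is in turn the Pontryagin dual of $H^1(G^{(i)})^G$, and the displayed pairing is precisely the evaluation pairing.

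There is no real obstacle here: the substance of the statement is the isomorphism $H^1(\bar N)\cong H^1(N)^G$ proved in Lemma~\ref{Pontryagin}, combined with Pontryagin reflexivity. The only point deserving a line of care is that the pairing be continuous and perfect in the topological sense, rather than merely non-degenerate as a pairing of abstract groups; but this is automatic once one works through the identification $H^1(G^{(i)})^G\cong H^1(G^{(i)}/G^{(i+1)})$ instead of with an abstract bilinear form.
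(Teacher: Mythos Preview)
Your proposal is correct and follows exactly the intended route: the paper states this corollary without proof, and the argument is precisely to specialize the preceding corollary to $N=G^{(i)}$, using that $G^{(i+1)}=(G^{(i)})^q[G^{(i)},G]$ by definition. Your extra care in passing from a non-degenerate pairing to an actual Pontryagin duality via the isomorphism $H^1(\bar N)\cong H^1(N)^G$ from Lemma~\ref{Pontryagin} and reflexivity is the right way to justify the word ``dual'' here.
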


\subsection*{B)\ Spectral sequences}
Let $N$ be a closed normal subgroup of $G$.
Recall that the Hochschild--Serre spectral sequence
\[
E_2^{ij}=H^i(G/N,H^j(N))\Rightarrow H^{i+j}(G)
\]
induces the $5$-term exact sequence
\begin{equation}
\label{5-term sequence}
0\to H^1(G/N)\xrightarrow{\inf_G}   H^1(G) \xrightarrow{\res_N} H^1(N)^G \xrightarrow{\trg_{G/N}}
H^2(G/N)  \xrightarrow{\inf_G} H^2(G).
\end{equation}
Here  $\trg_{G/N}$ is the differential $d_2^{0,1}$ of the spectral sequence \cite{NeukirchSchmidtWingberg}*{Th.\ 2.4.3}.
If $N'$ is another closed normal subgroup of $G$ and $N'\leq N$, then the inflation maps induced by
the projection $G/N'\to G/N$
and the restriction map $\res_{N'}\colon H^j(N)\to H^j(N')$ induce a spectral sequence morphism from
$H^i(G/N,H^j(N))\Rightarrow H^{i+j}(G)$ to $H^i(G/N',H^j(N'))\Rightarrow H^{i+j}(G)$
\cite{NeukirchSchmidtWingberg}*{p.\ 98}.
In particular, there is a commutative diagram
\begin{equation}
\label{trg res and inf commute}
\xymatrix{
 H^1(N)^G \ar[r]^{\trg_{G/N}}\ar[d]_{\res_{N'}} & H^2(G/N)\ar[d]^{\inf_{G/N'}} \\
 H^1(N')^G \ar[r]^{\trg_{G/N'}} &  H^2(G/N')  .\\
}
\end{equation}

\subsection*{C)\ Connecting homomorphisms}
Let $n$ be a positive integer.
The exact sequence
\[
0\ \to \ \dbZ/n\ \to\  \dbZ/n^2\ \to\ \dbZ/n\ \to\ 0
\]
of trivial $G$-modules gives rise to a connecting homomorphism
\[
\beta_{G,n}\colon H^1(G,\dbZ/n)\to H^2(G,\dbZ/n).
\]
When $n=q$ is our fixed $p$-power, we abbreviate
\[
\beta_G=\beta_{G,q}
\]
and call it the \textbf{Bockstein homomorphism} of $G$.
Note that it is functorial in $G$.
We now relate $\beta_{G,n}$ to some other connecting homomorphisms and cup products.

\begin{lem}
\label{formula for Bockstein when q is 2}
Suppose $q=2$.
For $\psi\in H^1(G)$ one has $\beta_G(\psi)=\psi\cup\psi$.
\end{lem}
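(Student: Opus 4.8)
The plan is to prove the identity by computing both sides on explicit inhomogeneous cochains. Recall that, with trivial coefficients, $\psi\in H^1(G)=\Hom(G,\dbZ/2)$ is represented by the homomorphism $\psi$ itself, that $\psi\cup\psi\in H^2(G)$ is represented by the $2$-cocycle $(g,h)\mapsto\psi(g)\psi(h)$, and that $\beta_G$ is the connecting homomorphism attached to $0\to\dbZ/2\xrightarrow{\cdot2}\dbZ/4\to\dbZ/2\to0$.

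First I would choose the set-theoretic section $s\colon\dbZ/2\to\dbZ/4$ with $s(0)=0$, $s(1)=1$, and lift $\psi$ to the (generally non-multiplicative) $1$-cochain $\tilde\psi=s\circ\psi\colon G\to\dbZ/4$. By the definition of the connecting homomorphism, $\beta_G(\psi)$ is the class of the $2$-cocycle $c(g,h)=\tilde\psi(g)-\tilde\psi(gh)+\tilde\psi(h)$, which a priori takes values in $2\dbZ/4$; under the identification $2\dbZ/4=\dbZ/2$ sending $2\mapsto1$ we must identify $c$ with the cup product cocycle above. Then I would run through the four possibilities for $(\psi(g),\psi(h))\in\dbZ/2\times\dbZ/2$: in the three cases where $\psi(g)$ or $\psi(h)$ is $0$ a direct check gives $c(g,h)=0$, whereas for $\psi(g)=\psi(h)=1$ one has $\psi(gh)=0$, hence $c(g,h)=1-0+1=2$. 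Thus $c(g,h)=2\psi(g)\psi(h)$ in $\dbZ/4$, i.e. $c$ represents $\psi\cup\psi$ under $2\dbZ/4\cong\dbZ/2$, which is the assertion.

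There is no serious obstacle here; the only point requiring care is bookkeeping — fixing the sign convention for the differential of a $1$-cochain with trivial coefficients and the normalization of $2\dbZ/4\cong\dbZ/2$ consistently with the chosen convention for the cup product, so that the two cocycles coincide on the nose rather than merely up to a sign. As an alternative, one may reduce to the universal case by naturality of both $\beta_G$ and $\psi\mapsto\psi\cup\psi$: a nonzero $\psi$ is the pullback along $\psi\colon G\twoheadrightarrow\dbZ/2$ of a generator $u$ of $H^1(\dbZ/2,\dbZ/2)$, so it suffices to prove $\beta_{\dbZ/2}(u)=u\cup u$; since $H^2(\dbZ/2,\dbZ/2)\cong\dbZ/2$ is generated by $u\cup u$, one only needs $\beta_{\dbZ/2}(u)\neq0$, and this follows from the $5$-term/long exact sequence because $\Hom(\dbZ/2,\dbZ/4)\to\Hom(\dbZ/2,\dbZ/2)$ is the zero map, so no nonzero class in $H^1(\dbZ/2,\dbZ/2)$ lifts to $\dbZ/4$-coefficients.
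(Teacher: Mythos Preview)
Your proof is correct. Your main approach---a direct cochain computation showing that the connecting cocycle $c(g,h)=s(\psi(g))+s(\psi(h))-s(\psi(gh))$ equals $2\psi(g)\psi(h)$ in $\dbZ/4$---is a genuinely different route from the paper's. The paper instead reduces immediately to the universal case $G=\dbZ/2$ via inflation (naturality of both $\beta_G$ and $\cup$), and there argues abstractly: $H^1(\dbZ/2)\isom H^2(\dbZ/2)\isom\dbZ/2$, the cup square $\psi\cup\psi$ is nonzero, and $\beta_G$ is injective because the map $H^1(G,\dbZ/4)\to H^1(G,\dbZ/2)$ is zero; hence both sides are the unique nonzero element. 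In fact this is precisely the alternative you sketch at the end, so you have anticipated the paper's argument as well. Your cochain approach has the advantage of being completely explicit and self-contained (no need to know $H^*(\dbZ/2)$), at the cost of a small amount of bookkeeping with conventions; the paper's approach is shorter once the cohomology of $\dbZ/2$ is taken for granted, and it illustrates the reduction-to-quotient technique that recurs throughout the paper.
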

\begin{proof}
This is straightforward when $G\isom\dbZ/2$.
In the general case, it follows by inflating from $G/\Ker(\psi)$ to $G$.
\end{proof}

Next let $\eps\colon H^1(G,\dbQ/\dbZ)\to H^2(G,\dbZ)$ be the connecting map arising from the short
exact sequence of trivial $G$-modules
\[
0\to\dbZ\hookrightarrow\dbQ\to\dbQ/\dbZ\to0.
\]
Since $\dbQ$ is cohomologically trivial, $\eps$ is in fact an isomorphism.
Let
\[
j_n\colon \tfrac1n\dbZ/\dbZ\xrightarrow{\sim}\dbZ/n, \quad
\pi_n\colon\dbZ\to \dbZ/n
\]
be the natural maps.
A routine computation gives:

\begin{lem}
\label{Bockstein and epsilon}
One has $\beta_{G,n}\circ j_n^*=\pi_n^*\circ\eps$ on $H^1(G,\tfrac1n\dbZ/\dbZ)$.
\end{lem}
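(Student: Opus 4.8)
The plan is to derive the identity purely from the functoriality of connecting homomorphisms under morphisms of short exact sequences of trivial $G$-modules; no deep input is needed here, and in particular the cohomological triviality of $\dbQ$ plays no role (it was used only to know that $\eps$ itself is an isomorphism).

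First I would unwind the right-hand side: on $H^1(G,\tfrac1m\dbZ/\dbZ)$ it means $\pi_n^*\circ\eps$ precomposed with the map $\iota^*\colon H^1(G,\tfrac1m\dbZ/\dbZ)\to H^1(G,\dbQ/\dbZ)$ induced by the inclusion $\iota\colon\tfrac1m\dbZ/\dbZ\hookrightarrow\dbQ/\dbZ$. Pulling the sequence $0\to\dbZ\to\dbQ\to\dbQ/\dbZ\to0$ back along $\iota$ produces the short exact sequence $0\to\dbZ\to\tfrac1m\dbZ\to\tfrac1m\dbZ/\dbZ\to0$ together with a morphism of short exact sequences
\[
\xymatrix{
0\ar[r] & \dbZ\ar[r]\ar@{=}[d] & \tfrac1m\dbZ\ar[r]\ar[d] & \tfrac1m\dbZ/\dbZ\ar[r]\ar[d]^{\iota} & 0\\
0\ar[r] & \dbZ\ar[r] & \dbQ\ar[r] & \dbQ/\dbZ\ar[r] & 0
}
\]
which is the identity on $\dbZ$. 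Functoriality of connecting homomorphisms then shows that $\eps\circ\iota^*$ equals the connecting homomorphism $\delta$ of the top row, so the right-hand side of the lemma equals $\pi_n^*\circ\delta$.

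Next I would write down a single morphism of short exact sequences of trivial $G$-modules
\[
\xymatrix{
0\ar[r] & \dbZ\ar[r]\ar[d]_{\pi_n} & \tfrac1m\dbZ\ar[r]\ar[d] & \tfrac1m\dbZ/\dbZ\ar[r]\ar[d]^{j_m} & 0\\
0\ar[r] & \dbZ/n\ar[r] & \dbZ/mn\ar[r] & \dbZ/m\ar[r] & 0,
}
\]
whose top row is the sequence from the previous paragraph, whose bottom row is the defining sequence of $\beta_{G,m,n}$, whose left-hand bottom map is multiplication by $m$, and whose middle vertical map sends $\tfrac am\mapsto a\bmod mn$ (i.e.\ is ``multiply by $m$, then reduce mod $mn$''). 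A one-line check shows both squares commute. Applying functoriality of connecting homomorphisms to this morphism gives $\beta_{G,m,n}\circ j_m^*=\pi_n^*\circ\delta$, and combining this with the previous paragraph yields $\beta_{G,m,n}\circ j_m^*=\pi_n^*\circ\eps$ on $H^1(G,\tfrac1m\dbZ/\dbZ)$, as claimed.

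I do not expect a genuinely hard step; the only thing to be careful about is the bookkeeping — the multiplication-by-$m$ maps appearing in the middle column and the lower-left corner, the identification of the top row above as the pullback of the $\dbQ$-sequence along $\iota$, and the directions of the maps $j_m^*$ and $\pi_n^*$, which, despite the star notation, are the covariant maps on cohomology induced by the $G$-module homomorphisms $j_m$ and $\pi_n$.
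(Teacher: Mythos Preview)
Your proof is correct. Both your argument and the paper's rest on the same idea---functoriality of connecting homomorphisms under morphisms of short exact sequences---but the bookkeeping is organized differently. The paper routes the computation through the auxiliary connecting map $\beta'_{G,m,n}$ and the map $h\colon\dbQ\to\dbQ/\dbZ$, $x\mapsto x/n+\dbZ$: it stacks the three rows
\[
\xymatrix{
0 \ar[r] & \dbZ \ar@{^{(}->}[r] \ar[d]_{h}  & \dbQ \ar[r]\ar[d]_{h} & \dbQ/\dbZ \ar[r]\ar@{=}[d] & 0\\
0\ar[r] & \tfrac1n\dbZ/\dbZ\ar@{^{(}->}[r] & \dbQ/\dbZ\ar[r]^n & \dbQ/\dbZ\ar[r] & 0\\
0\ar[r] & \tfrac1n\dbZ/\dbZ\ar@{^{(}->}[r]\ar@{=}[u] & \tfrac1{nm}\dbZ/\dbZ\ar[r]^n\ar@{^{(}->}[u] &
\tfrac1m\dbZ/\dbZ\ar[r]\ar@{^{(}->}[u] & 0,
}
\]
obtains $\beta'_{G,m,n}=h^*\circ\eps$ on $H^1(G,\tfrac1m\dbZ/\dbZ)$, and then finishes with the identities $\beta_{G,m,n}\circ j_m^*=j_n^*\circ\beta'_{G,m,n}$ and $j_n^*\circ h^*=\pi_n^*$. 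Your argument instead pulls back along $\tfrac1m\dbZ/\dbZ\hookrightarrow\dbQ/\dbZ$ to the sequence $0\to\dbZ\to\tfrac1m\dbZ\to\tfrac1m\dbZ/\dbZ\to0$ and maps that directly to the $\dbZ/mn$ sequence, which is a bit more streamlined since it avoids introducing $\beta'_{G,m,n}$ altogether. The paper's detour through $\beta'_{G,m,n}$ is not wasted effort, however: that map is used again in Lemma~\ref{special connecting maps}(a) and Corollary~\ref{Bockstein and Kummer}(a), so establishing its relation to $\eps$ here pays off later.
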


Now let $F$ be a field with $\Char\,F\ndiv\,  n$ and containing the group $\mu_n$ of all roots of unity
in $F_\sep$ of order dividing $n$.
Let
\[
\kappa_n\colon F^\times=H^0(G_F,F_\sep^\times)\to H^1(G_F,\mu_n).
\]
be the Kummer homomorphism.
We shall denote by same symbol $\kappa_n$ also the restricted map  $\mu_n=H^0(G_F,\mu_n)\to H^1(G_F,\mu_n)$.

\begin{prop}
\label{Bockstein and Kummer}
There is an equality of maps
\[
\beta_{G_F,n}\cup\id=\id\cup\kappa_n\colon
 H^1(G_F,\dbZ/n)\times H^0(G_F,\mu_n)\to H^2(G_F,\mu_n).
\]
\end{prop}
\begin{proof}
The embeddings $\dbZ/n\hookrightarrow\dbQ/\dbZ$ and $\mu_n\hookrightarrow F^\times$ induce a commutative diagram
\[
\xymatrix{
H^1(G_F,\dbZ/n)\times H^0(G_F,\mu_n)\ar[r]\ar[d]_{\beta_{G_F,n}\times\id} &
H^1(G_F,\dbQ/\dbZ)\times F^\times\ar[d]_{\wr}^{\eps\times\id\quad} \\
H^2(G_F,\dbZ/n)\times H^0(G_F,\mu_n)\ar[d]_{\cup} &
H^2(G_F,\dbZ)\times H^0(G_F,F_\sep^\times)\ar[d]^{\cup} \\
H^2(G_F,\mu_n) \ar[r] &  H^2(G_F,F_\sep^\times).
}
\]
We first show that this diagram commutes.

For this end, consider $\chi\in H^1(G_F,Z/n)$, $\zeta\in\mu_n$, and let $[\zeta]$ be the corresponding
element of $H^0(G_F,\mu_n)$.
Let $s$ be the section of the canonical map $\dbQ\to\dbQ/\dbZ$ given by $0\leq s(q+\dbZ)<1$ for $q\in \dbQ$.
Then $\eps(\chi)\in H^2(G_F,\dbZ)$ is represented by the $2$-cocycle $G_F\times G_F\to \dbZ$,
\begin{equation}
\label{cocycle1}
(\sig,\tau)\mapsto
\begin{cases}
0 & a+b<n\\  1& a+b\geq n,
\end{cases}
\end{equation}
where $\chi(\sig)=a$ (mod $n$), $\chi(\tau)=b$ (mod $n$) with $0\leq a,b<n$.
Hence $\eps(\chi)\cup[\zeta]\in H^2(G_F,F_\sep^\times)$ is represented by the $2$-cocycle
\begin{equation}
\label{cocycle2}
(\sig,\tau)\mapsto
\begin{cases}
1 & a+b<n\\  \zeta& a+b\geq n.
\end{cases}
\end{equation}

Likewise, let $t$ be the section of the canonical map $\dbZ/n^2\to\dbZ/n$ given for $0\leq i<n$  by $i$ (mod $n$) $\mapsto i$ (mod $n^2$).
Using it we similarly see that $\beta_{G_F,n}(\chi)\in H^2(G_F,\dbZ/n)$ is represented by the $2$-cocycle (\ref{cocycle1}) taken modulo $n$.
Hence $\beta_{G_F,n}(\chi)\cup[\zeta]\in H^2(G_F,\mu_n)$ is also represented by the $2$-cocycle (\ref{cocycle2}), and the commutativity follows.

Next we recall the identification of cyclic algebras as symbol algebras (for fields containing enough roots of unity), which in cohomological terms means that
the following diagram commutes (see \cite{SerreLF}*{Ch.\ 14, Prop.\ 5 and Remark 1}):
\[
\xymatrix{
H^1(G_F,\dbZ/n)\times F^\times\ar[r]\ar[d]_{\id\times\kappa_n} & H^1(G_F,\dbQ/\dbZ)\times F^\times\ar[d]^{\eps\times\id}_{\wr} \\
H^1(G_F,\dbZ/n)\times H^1(G_F,\mu_n)\ar[d]_{\cup}  & H^2(G_F,\dbZ)\times H^0(G_F,F_\sep^\times)\ar[d]^{\cup} \\
H^2(G_F,\mu_n)\ar[r] & H^2(G_F,F_\sep^\times).
}
\]

Finally, Hilbert's Satz 90 implies that the natural map $H^2(G_F,\mu_n)\to H^2(G_F,F_\sep^\times)$ is injective.
Identifying $H^0(G_F,\mu_n)$ as a subgroup of $F^\times$, we therefore conclude from the two diagrams that $\beta_{G_F,n}\cup\id=\id\cup\kappa_n$ on
$H^1(G_F,\dbZ/n)\times H^0(G_F,\mu_n)$.
\end{proof}

See \cite{Ledet05}*{p.\ 91} and \cite{Koch02}*{Th.\ 8.13} for related results.

\subsection*{D)\ Cohomology of finite abelian $p$-groups}

For a profinite group $G$, let $H^i_\dec(G)$ be the \textbf{decomposable part} of $H^i(G)$, i.e.,
its subgroup generated by cup products of elements of $H^1(G)$.
In this subsection we show that when $G=(\dbZ/q)^n$, the group $H^2(G)$ is generated by $H^i_\dec(G)$
and the image of $\beta_G$.
In fact, for every finite abelian $p$-group $G$ of exponent divisible by $q=p^d$,
the structure of $H^*(G)$ as a graded ring was computed by Chapman (for $p\neq2$) and by
Townsley-Kulich (for $p=2$), in terms of generators and relations (\cite{Chapman82}, \cite{Townsley88}).
Since the identification of the Bockstein elements as generators  is somewhat implicit in
\cite{Chapman82} and \cite{Townsley88}, we outline an alternative proof of the required result.
It is based on the following decomposition of $H^2(G)$ to its symmetric and skew-symmetric parts,
as studied by Tignol and Amitsur (\cite{TignolAmitsur85}, \cite{Tignol86});
see also Massy \cite{Massy87}.

Let $G$ be a finite abelian group and $A$ be a finite trivial $G$-module.
Call a map $a\colon G\times G\to A$ \textbf{skew-symmetric} if it is $\dbZ$-bilinear
and $a(\sig,\sig)=0$ for all $\sig\in G$.
Then $a(\sig,\tau)=-a(\tau,\sig)$ for $\sig,\tau\in G$.
The set $\Skew(G,A)$ of all such maps forms an abelian group under addition.

For a $2$-cocycle $f\in Z^2(G,A)$ define $a_f\in \Skew(G,A)$ by $a_f(\sig,\tau)=f(\sig,\tau)-f(\tau,\sig)$.
We call $f$ \textbf{symmetric} if $a_f=0$.
Since the action of $G$ on $A$ is trivial, $2$-coboundries are symmetric.
Let $H^2(G,A)_\sym$ be the subgroup of $H^2(G,A)$ consisting of all cohomology classes of symmetric $2$-cocyles.
The map $f\mapsto a_f$ induces a homomorphism $\Psi$ with a split exact sequence \cite{TignolAmitsur85}*{Prop.\ 1.3}
\begin{equation}
\label{Tignol-Amitsur exact sequence}
0\to H^2(G,A)_\sym\to H^2(G,A)\xrightarrow{\Psi}\Skew(G,A)\to0.
\end{equation}

For $\eps$ as above,  $\eps\cup\id$ gives by \cite{Tignol86}*{Prop.\ 1.5}  an isomorphism
\begin{equation}
\label{isomorphism for H2sym}
H^1(G,\dbQ/\dbZ)\tensor_\dbZ A\xrightarrow{\sim} H^2(G,A)_\sym,
\end{equation}

Now let $A=\dbZ/q$.

\begin{prop}
\label{Psi of H2dec}
For $G=(\dbZ/q)^n$ one has $\Psi(H^2_\dec(G))=\Skew(G,\dbZ/q)$.
\end{prop}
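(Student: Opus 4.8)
The plan is to exhibit explicit elements of $H^1(G)$ whose cup products map onto all of $\Skew(G,\dbZ/q)$ under $\Psi$. Write $G=(\dbZ/q)^n$ with standard basis $e_1\nek e_n$, and let $\chi_1\nek\chi_n\in H^1(G)=\Hom(G,\dbZ/q)$ be the dual basis, so $\chi_i(e_j)=\delta_{ij}$. The group $\Skew(G,\dbZ/q)$ is the group of skew-symmetric $\dbZ$-bilinear forms $a\colon G\times G\to\dbZ/q$ with $a(\sig,\sig)=0$; such a form is determined by its values $a(e_i,e_j)$ for $i<j$, which may be arbitrary elements of $\dbZ/q$, so $\Skew(G,\dbZ/q)\isom(\dbZ/q)^{\binom n2}$. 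Thus it suffices to show that for each pair $i<j$, the class $\chi_i\cup\chi_j\in H^2_\dec(G)$ has $\Psi(\chi_i\cup\chi_j)$ equal (up to a unit) to the elementary skew form supported on the $(i,j)$-entry.

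The key computation is therefore: for $\varphi,\psi\in H^1(G)=\Hom(G,\dbZ/q)$, compute $\Psi(\varphi\cup\psi)$. First I would fix the standard inhomogeneous cocycle representative of $\varphi\cup\psi$: one may realize $\varphi,\psi$ as connecting homomorphisms via $0\to\dbZ/q\to\dbZ/q^2\to\dbZ/q\to0$ applied to set-theoretic lifts, but more directly, choosing the section $s\colon\dbZ/q\to\dbZ$ with values in $\{0,1\nek q-1\}$, the cup product $\varphi\cup\psi$ is represented by the $2$-cocycle $f(\sig,\tau)=\tilde\varphi(\sig)\cdot\psi(\tau)$ where $\tilde\varphi(\sig)=s(\varphi(\sig))\in\dbZ$ is the lift; here I use that $H^1(G)=\Hom(G,\dbZ/q)$ and the standard formula for cup products of one-cocycles. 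Then $a_f(\sig,\tau)=\tilde\varphi(\sig)\psi(\tau)-\tilde\varphi(\tau)\psi(\sig)\bmod q$. Since $a_f$ is already $\dbZ$-bilinear in this presentation, one checks that $a_f(\sig,\tau)\equiv\varphi(\sig)\psi(\tau)-\varphi(\tau)\psi(\sig)\pmod q$, i.e.\ $\Psi(\varphi\cup\psi)$ is the skew form $(\sig,\tau)\mapsto\varphi(\sig)\psi(\tau)-\varphi(\tau)\psi(\sig)$. Applying this to $\varphi=\chi_i$, $\psi=\chi_j$ with $i<j$ gives the skew form whose only nonzero entries (on basis pairs) are $a(e_i,e_j)=1$ and $a(e_j,e_i)=-1$, which is precisely the $(i,j)$-th elementary generator of $\Skew(G,\dbZ/q)$.

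Since these elementary forms generate $\Skew(G,\dbZ/q)$ and each lies in $\Psi(H^2_\dec(G))$, and $\Psi$ is a group homomorphism, we conclude $\Psi(H^2_\dec(G))=\Skew(G,\dbZ/q)$; combined with the obvious inclusion $\Psi(H^2_\dec(G))\subseteq\Skew(G,\dbZ/q)$ from the exact sequence (\ref{Tignol-Amitsur exact sequence}), equality follows. The main obstacle I anticipate is purely bookkeeping: pinning down the correct sign and normalization in the explicit cocycle formula for $\varphi\cup\psi$ and verifying that the reduction mod $q$ of $a_f$ genuinely lands in $\Skew(G,\dbZ/q)$ — in particular that $a_f(\sig,\sig)\equiv0$, which holds because $\tilde\varphi(\sig)\psi(\sig)-\tilde\varphi(\sig)\psi(\sig)=0$ on the nose. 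No deeper input is needed; the result is a direct consequence of the definition of $\Psi$ and of cup products on $H^1$.
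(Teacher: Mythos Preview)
Your proof is correct and follows essentially the same approach as the paper: choose a dual basis $\chi_1\nek\chi_n$ of $H^1(G)$, compute $\Psi(\chi_i\cup\chi_j)$ via the explicit cocycle $(\sig,\tau)\mapsto\chi_i(\sig)\chi_j(\tau)$, and observe that the resulting elementary skew forms generate $\Skew(G,\dbZ/q)$. The only superfluous step is your detour through a set-theoretic lift $\tilde\varphi\colon G\to\dbZ$; since $\dbZ/q$ is already a ring, the product $\varphi(\sig)\psi(\tau)$ makes sense directly in $\dbZ/q$, and this is how the paper writes the cocycle.
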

\begin{proof}
Write $G=\langle\sig_1\rangle\times\cdots\times\langle\sig_n\rangle$ with $\sig_i$ of order $q$.
Take $\chi_1\nek \chi_n\in H^1(G)$ such that $\chi_i(\sig_i)=1$ for all $i$, and $\chi_i(\sig_k)=0$ for $i\neq k$.
For distinct $i,j$ the cohomology class $\chi_i\cup\chi_j$ is represented by the $2$-cocyle
$(\sig,\tau)\mapsto\chi_i(\sig)\chi_j(\tau)$.
Hence
\[
\begin{split}
(\Psi(\chi_i\cup\chi_j))(\sig_k,\sig_l)&=
\chi_i(\sig_k)\chi_j(\sig_l)-\chi_i(\sig_l)\chi_j(\sig_k)
\end{split}
\]
is $1$ if $(i,j)=(k,l)$, is $-1$ if $(i,j)=(l,k)$, and is $0$ otherwise.

Now given $a\in\Skew(G,\dbZ/q)$, take $\varphi=\sum_{i<j}a(\sig_i,\sig_j)\cdot\chi_i\cup\chi_j$.
For $k<l$ we get
$(\Psi(\varphi))(\sig_k,\sig_l)=a(\sig_k,\sig_l)$.
But maps in $\Skew(G,\dbZ/q)$ are determined by their values on $(\sig_k,\sig_l)$, $k<l$.
Hence  $\Psi(\varphi)=a$.
\end{proof}

\begin{prop}
\label{the Bockstein map and H2sym}
Let $G$ be a finite abelian group of exponent dividing $q$.
Then $\beta_G$ maps $H^1(G)$ isomorphically onto $H^2(G)_\sym$.
\end{prop}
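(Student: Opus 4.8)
The plan is to compute both sides explicitly for $G=(\dbZ/q)^n$ first, and then reduce the general case to this one by a structural argument. For $G=(\dbZ/q)^n$, write $G=\langle\sig_1\rangle\times\cdots\times\langle\sig_n\rangle$ and pick the dual basis $\chi_1\nek\chi_n$ of $H^1(G)$ as in the proof of Proposition \ref{Psi of H2dec}. First I would show $\beta_G(\chi_i)$ is symmetric for each $i$: since $\beta_G$ is functorial, $\beta_G(\chi_i)=\chi_i^*(\beta_{\dbZ/q}(\text{generator}))$ is inflated from the cyclic quotient $\langle\sig_i\rangle\isom\dbZ/q$, so it suffices to check that for $G=\dbZ/q$ the Bockstein of the canonical generator is represented by a symmetric $2$-cocycle. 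This is the standard fact that $\beta_G(\chi)$ is the class of the cyclic extension $\dbZ/q^2\to\dbZ/q$, represented by the cocycle $(i,j)\mapsto\lfloor (i+j)/q\rfloor$ (with $0\le i,j<q$), which is manifestly symmetric. Hence $\beta_G(H^1(G))\subseteq H^2(G)_\sym$.

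Next I would establish injectivity of $\beta_G$ on $H^1(G)$ for $G=(\dbZ/q)^n$. One clean way: the functorial map $H^1(G,\dbZ/q^2)\to H^1(G,\dbZ/q)$ is zero, because $G$ has exponent $q$ so $\Hom(G,\dbZ/q^2)\to\Hom(G,\dbZ/q)$ has image in the $q$-torsion, which is the image of $q\colon\dbZ/q^2\to\dbZ/q$, i.e.\ zero; by the cohomology long exact sequence of $0\to\dbZ/q\to\dbZ/q^2\to\dbZ/q\to0$, $\beta_G$ is injective. Then I would count dimensions over $\dbF_p$ (or, when $q>p$, compare orders as abelian groups): $H^1(G)=(\dbZ/q)^n$ has the same order as $H^2(G)_\sym$, which by the isomorphism (\ref{isomorphism for H2sym}) equals $H^1(G,\dbQ/\dbZ)\tensor_\dbZ\dbZ/q$; since $H^1(G,\dbQ/\dbZ)\isom G\isom(\dbZ/q)^n$ and $(\dbZ/q)\tensor\dbZ/q\isom\dbZ/q$, this group is $(\dbZ/q)^n$ as well. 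An injective homomorphism between finite abelian groups of the same order is an isomorphism, proving the case $G=(\dbZ/q)^n$.

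For a general finite abelian $p$-group $G$ of exponent $p^e$ with $e\ge d$ (if $e<d$ one replaces $q$ by $p^e$ in the bookkeeping, or notes $H^1(G)$ is smaller accordingly), I would use a surjection $\pi\colon(\dbZ/p^e)^n\twoheadrightarrow G$ and a splitting $G\hookrightarrow(\dbZ/p^e)^n$ of abelian groups to transfer the result. Concretely, the naturality square for $\beta$ together with the computation for $(\dbZ/p^e)^n$ (the same argument as above works verbatim for any prime power in place of $q$) shows $\beta_G$ lands in $H^2(G)_\sym$ via $\pi^*$ being compatible with the $\Psi$ exact sequence (\ref{Tignol-Amitsur exact sequence}) — pullback of a symmetric cocycle is symmetric. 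Injectivity of $\beta_G$ follows since $H^1(G,\dbZ/q^2)\to H^1(G,\dbZ/q)$ need not be zero for general $G$, so here I would instead argue directly: the splitting $G\hookrightarrow(\dbZ/p^e)^n$ gives a retraction on $H^1$ and on $H^2$ compatible with $\beta$, so injectivity of $\beta$ on the larger group forces it on $G$. Finally, surjectivity onto $H^2(G)_\sym$: using (\ref{isomorphism for H2sym}), $H^2(G)_\sym\isom H^1(G,\dbQ/\dbZ)\tensor\dbZ/q$, and under this identification $\beta_G$ corresponds to the map induced by $\tfrac1{p^\infty}\dbZ/\dbZ$-coefficients; the surjection $H^1(G,\dbQ/\dbZ)\to H^1(G,\dbZ/q)$ (dual to $G\twoheadrightarrow G/qG$) becomes, after $\tensor\dbZ/q$, the identity, and chasing Lemma \ref{Bockstein and epsilon} identifies the composite with $\beta_G$ precisely. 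I expect the main obstacle to be the last identification — matching $\beta_G$ under the Tignol isomorphism (\ref{isomorphism for H2sym}) with the coefficient map $H^1(G,\dbQ/\dbZ)\tensor\dbZ/q\to H^2(G,\dbZ/q)_\sym$ — which requires carefully composing the diagram in Lemma \ref{Bockstein and epsilon} (with $m=n=q$) with the map $\eps\cup\id$ from \cite{Tignol86}*{Prop.\ 1.5}; once that compatibility is in hand, bijectivity is immediate from the corresponding statement about the coefficient map.
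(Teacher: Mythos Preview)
Your argument for $G=(\dbZ/q)^n$ is correct and complete, though it takes a different route from the paper. The paper works more directly: it observes that, since $G$ has exponent $q$, one has literally $H^1(G,\tfrac1q\dbZ/\dbZ)=H^1(G,\dbQ/\dbZ)$, and then Lemma~\ref{Bockstein and epsilon} identifies the Tignol isomorphism $(\pi_q^*\circ\eps)\cup\id$ of (\ref{isomorphism for H2sym}) with $(\beta_G\circ j_q^*)\cup\id$, which is just $\beta_G$. So the paper never needs to check symmetry, injectivity, or count orders separately --- it recognizes $\beta_G$ as \emph{being} the known isomorphism. Your explicit-cocycle-plus-counting argument is longer but more self-contained, and sidesteps the diagram chase you yourself flag as the main obstacle.

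For the general finite abelian $p$-group, there is a genuine problem: the statement is actually false once the exponent of $G$ exceeds $q$. Take $G=\dbZ/p^2$ and $q=p$. Then $H^1(G,\dbZ/p^2)\to H^1(G,\dbZ/p)$ is surjective (the identity on $\dbZ/p^2$ maps to the projection), so $\beta_G=0$; yet $\Skew(G,\dbZ/p)=0$ for cyclic $G$, whence $H^2(G)_\sym=H^2(G)\isom\dbZ/p\neq 0$. Your reduction via a split surjection $(\dbZ/p^e)^n\twoheadrightarrow G$ breaks precisely here: the Bockstein on $(\dbZ/p^e)^n$ with $\dbZ/q$-coefficients is \emph{not} injective when $e>d$, so ``injectivity of $\beta$ on the larger group forces it on $G$'' fails; and the ``same argument as above'' you invoke establishes the result for $\dbZ/p^e$-coefficients, not for the fixed $\dbZ/q$-coefficients of the proposition. (The paper's proof has the same hidden restriction: the displayed equality $H^1(G,\tfrac1q\dbZ/\dbZ)\tensor H^0(G)=H^1(G,\dbQ/\dbZ)\tensor H^0(G)$ holds only when $\exp(G)\mid q$. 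This does not affect anything downstream, since only Corollary~\ref{elementary abelian} for $G=(\dbZ/q)^n$ is ever used.)
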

\begin{proof}
As $(\dbZ/q)\tensor_\dbZ(\dbZ/q)\isom\dbZ/q$, the isomorphism (\ref{isomorphism for H2sym}) coincides with
\[
(\pi_q^*\circ\eps)\cup\id\colon H^1(G,\dbQ/\dbZ)\tensor H^0(G)\to H^2_\sym(G),
\]
where $\pi_q\colon \dbZ\to \dbZ/q$ is the natural map.
Moreover, $H^0(G)=\dbZ/q$, so
\[
H^1(G,\tfrac1q\dbZ/\dbZ)\tensor H^0(G)=H^1(G,\dbQ/\dbZ)\tensor H^0(G).
\]
By Lemma \ref{Bockstein and epsilon},  (\ref{isomorphism for H2sym}) is therefore also given by
\[
(\beta_G\circ j_q^*)\cup\id\colon H^1(G,\tfrac1q\dbZ/\dbZ)\tensor H^0(G)\to H^2_\sym(G),
\]
and the latter isomorphism may be identified with $\beta_G$.
\end{proof}

\begin{cor}
\label{elementary abelian}
Let $G=(\dbZ/q)^n$.
\begin{enumerate}
\item[(a)]
The group $H^2(G)$ is generated by $H^2_\dec(G)$ and  the image of $\beta_G$.
\item[(b)]
When $q=2$ one has $H^2(G)=H^2_\dec(G)$.
\end{enumerate}
\end{cor}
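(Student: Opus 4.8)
The plan is to derive Corollary \ref{elementary abelian} directly from the two structural results about $H^2(G)$ already established, namely Proposition \ref{the Bockstein map and H2sym} and Proposition \ref{Psi of H2dec}, together with the split exact sequence (\ref{Tignol-Amitsur exact sequence}). First I would recall that for $A=\dbZ/q$ and $G=(\dbZ/q)^n$ the sequence (\ref{Tignol-Amitsur exact sequence}) reads
\[
0\to H^2(G)_\sym\to H^2(G)\xrightarrow{\Psi}\Skew(G,\dbZ/q)\to0,
\]
so that $H^2(G)$ is the (internal) sum of the subgroup $H^2(G)_\sym$ and any subgroup mapping onto $\Skew(G,\dbZ/q)$ under $\Psi$.

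For part (a): by Proposition \ref{Psi of H2dec}, $\Psi(H^2_\dec(G))=\Skew(G,\dbZ/q)$, so the decomposable part already surjects onto the quotient; and by Proposition \ref{the Bockstein map and H2sym} (applied to the finite abelian $p$-group $G=(\dbZ/q)^n$, whose exponent $q=p^d$ is of course divisible by $q$), the image of $\beta_G$ equals $H^2(G)_\sym$, which is precisely the kernel of $\Psi$. Hence given any $x\in H^2(G)$, the element $\Psi(x)$ lies in $\Psi(H^2_\dec(G))$, so there is $y\in H^2_\dec(G)$ with $\Psi(x)=\Psi(y)$; then $x-y\in\Ker(\Psi)=H^2(G)_\sym=\mathrm{Im}(\beta_G)$, so $x=y+(x-y)$ exhibits $x$ as a sum of a decomposable element and an element in the image of $\beta_G$. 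This proves (a).

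For part (b), when $q=2$: Lemma \ref{formula for Bockstein when q is 2} gives $\beta_G(\psi)=\psi\cup\psi$ for every $\psi\in H^1(G)$, so the image of $\beta_G$ is contained in $H^2_\dec(G)$; feeding this into part (a) immediately yields $H^2(G)=H^2_\dec(G)$. Alternatively one could argue that in characteristic-$2$ coefficients every symmetric cocycle class is already a sum of squares $\chi_i\cup\chi_i$ via the isomorphism (\ref{isomorphism for H2sym}), but invoking Lemma \ref{formula for Bockstein when q is 2} is cleaner.

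I do not anticipate a genuine obstacle here: both halves are short formal deductions once Propositions \ref{Psi of H2dec} and \ref{the Bockstein map and H2sym} are in hand. The only point requiring a word of care is the verification that Proposition \ref{the Bockstein map and H2sym} does apply with $A=\dbZ/q$ coefficients and that its conclusion $\mathrm{Im}(\beta_G)=H^2(G)_\sym$ matches the kernel appearing in (\ref{Tignol-Amitsur exact sequence}) for the same module $A=\dbZ/q$ — which it does, since the symmetric subgroup is defined intrinsically from the cocycle-symmetrization map $f\mapsto a_f$ independently of which statement invokes it. So the write-up is essentially the two-line combinatorial argument with $\Psi$ above, plus the citation of Lemma \ref{formula for Bockstein when q is 2} for part (b).
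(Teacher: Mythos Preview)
Your proposal is correct and follows exactly the paper's approach: part (a) combines Proposition \ref{Psi of H2dec}, Proposition \ref{the Bockstein map and H2sym}, and the exact sequence (\ref{Tignol-Amitsur exact sequence}), and part (b) follows from (a) together with Lemma \ref{formula for Bockstein when q is 2}. The paper's proof is terser but substantively identical.
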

\begin{proof}
(a) follows from Proposition \ref{Psi of H2dec}, Proposition \ref{the Bockstein map and H2sym},
and the exact sequence (\ref{Tignol-Amitsur exact sequence}).
(b) follows from (a) and Lemma \ref{formula for Bockstein when q is 2}.
\end{proof}

\section{Groups of Galois relation type}
Let $G$ be again a profinite group.
The cup product $\cup\colon H^1(G)\times H^1(G)\to H^2(G)$ uniquely extends to a homomorphism
\begin{equation}
\label{cup}
\textstyle
\cup\colon H^1(G)\tensor_\dbZ H^1(G)\to H^2(G),\quad \alp\mapsto\cup\alp.
\end{equation}

\begin{defin}
\label{Galois relation type} \rm
We say that $G$ has \textbf{Galois relation type} (relative to $q$) if:
\begin{enumerate}
\item[(i)]
the kernel of the homomorphism (\ref{cup}) is generated by elements of the form $\psi\tensor\psi'$,
where $\psi,\psi'\in H^1(G)$;
\item[(ii)]
there exists $\xi\in H^1(G)$ such that for every $\psi\in H^1(G)$  one has $\psi\cup\xi+\beta_G(\psi)=0$; and
\item[(iii)]
the natural map $H^1(G)=H^1(G,\dbZ/q)\to H^1(G,\dbZ/p^i)$ is surjective for $1\leq i\leq d$ (where $q=p^d$).
\end{enumerate}
\end{defin}

As a main example, consider a field $F$ of characteristic $\neq p$ and containing a (fixed) primitive $q$th
root of unity $\zeta_q$.
Let $G_F$ be the absolute Galois group of $F$.
Let $K^M_i(F)$ be the $i$th Milnor $K$-group of $F$, and consider the Galois symbol $K^M_i(F)/q\to H^i(G_F,\dbZ/q)$.
It is an isomorphism for $i=1,2$, by the Kummer theory and the Merkurjev--Suslin theorem
(\cite{MerkurjevSuslin82}, \cite{GilleSzamuely}*{Th.\ 8.6.5}), respectively.
Moreover, it induces a commutative square
\begin{equation}
\label{Galois symbols}
\xymatrix{
(F^\times/(F^\times)^q)\tensor_\dbZ(F^\times/(F^\times)^q) \ar[r]^>>>>>{\sim}\ar[d]  &
H^1(G_F)\tensor_\dbZ H^1(G_F) \ar[d]^{\cup} \\
K^M_2(F)/q \ar[r]^{\sim}  &  H^2(G_F).  \\
}
\end{equation}
Here the left vertical map is given by
\[
\sum_{i=1}^n(a_i(F^\times)^q\tensor b_i(F^\times)^q)\mapsto\sum_{i=1}^n\{a_i,b_i\}+qK^M_2(F)
\]
and is surjective.
Its kernel is the \textbf{Steinberg group}, generated by all $a(F^\times)^q\tensor b(F^\times)^q$
with $1\in a(F^\times)^q+b(F^\times)^q$ \cite{EfratBook}*{\S24.1}.
We obtain:

\begin{prop}
\label{Galois nature}
$G=G_F$ has Galois relation type.
\end{prop}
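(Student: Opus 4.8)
The statement to prove is Proposition \ref{Galois nature}: that $G = G_F$ has Galois relation type for a field $F$ of characteristic $\neq p$ containing a primitive $q$th root of unity. I need to verify the three conditions (i)--(iii) of Definition \ref{Galois relation type}, and the excerpt has essentially set up all the needed input; the task is to assemble it.

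\medskip

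For condition (i), the plan is to read off the kernel of the cup product map $\cup\colon H^1(G_F)\tensor_\dbZ H^1(G_F)\to H^2(G_F)$ from the commutative square (\ref{Galois symbols}). Since the horizontal maps there are isomorphisms (Kummer theory in degree $1$, Merkurjev--Suslin in degree $2$) and the left vertical map $(F^\times/(F^\times)^q)^{\tensor 2}\to K^M_2(F)/q$ is surjective with kernel the Steinberg group, the kernel of $\cup$ is the image of the Steinberg group under the top isomorphism. The Steinberg group is by definition generated by elements $a(F^\times)^q\tensor b(F^\times)^q$ with $1\in a(F^\times)^q + b(F^\times)^q$, i.e.\ $b = 1-a$; so I need to observe that under the Kummer isomorphism such a generator maps to $\psi\tensor\psi'$ with $\psi,\psi'\in H^1(G_F)$ — indeed of the special form $\kappa(a)\tensor\kappa(1-a)$ — and in particular is of the form $\psi\tensor\psi'$ required in (i). (One can even absorb this into a single $\psi\tensor\psi$-type generator by a polarization identity, but (i) only asks for products of two elements, possibly distinct, so this is immediate.)

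\medskip

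For condition (ii), the plan is to take $\xi = \kappa_q(\zeta_q)\in H^1(G_F,\mu_q)$, identified with an element of $H^1(G_F) = H^1(G_F,\dbZ/q)$ via the chosen primitive root of unity $\zeta_q$ (so $\mu_q\isom\dbZ/q$ as $G_F$-modules). Then I apply Corollary \ref{Bockstein and Kummer}(b) with $m=n=q$, so $d=q$: it gives $\beta_{G_F}(\psi)\cup 1 = \psi\cup\kappa_q(\mu_q$-generator$)$ as maps on $H^1(G_F,\dbZ/q)\times H^0(G_F,\mu_q)$; evaluating at the generator $\zeta_q\in H^0(G_F,\mu_q) = \mu_q$ and using $\beta_G = \beta_{G,q,q}$ yields $\beta_{G_F}(\psi) = \psi\cup\xi$. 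By anti-commutativity of the cup product in degree $1$, $\psi\cup\xi = -\xi\cup\psi = -\psi\cup\xi$... — more carefully, I should just note $\psi\cup\xi = -\xi\cup\psi$ and rearrange to get $\psi\cup\xi + \beta_G(\psi) = 0$ after adjusting signs; the correct bookkeeping is that Corollary \ref{Bockstein and Kummer}(b) already delivers $\beta_G(\psi) = \psi\cup\xi$ up to a sign determined by the convention, and flipping the order of the cup product (degree $1$ times degree $1$ is anti-commutative) converts this into the stated relation $\psi\cup\xi + \beta_G(\psi) = 0$. This sign-chase is the one genuinely fiddly point, and I expect it to be the main obstacle — not conceptually hard, but one must be careful that the conventions in Corollary \ref{Bockstein and Kummer} and Definition \ref{Galois relation type}(ii) are compatible; I would state it as following from Corollary \ref{Bockstein and Kummer}(b) together with the anti-commutativity of $\cup$ on $H^1$.

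\medskip

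For condition (iii), the plan is to use Kummer theory again: for each $i$ with $1\le i\le d$ the natural map $H^1(G_F,\dbZ/q)\to H^1(G_F,\dbZ/p^i)$ is identified, via the Kummer isomorphisms $H^1(G_F,\dbZ/p^j)\isom F^\times/(F^\times)^{p^j}$ (using $\zeta_q$, hence $\zeta_{p^i}$, to trivialize the twists), with the natural surjection $F^\times/(F^\times)^q \to F^\times/(F^\times)^{p^i}$, which is obviously surjective since $p^i \mid q$. Assembling (i), (ii), (iii) gives that $G_F$ has Galois relation type, proving Proposition \ref{Galois nature}.
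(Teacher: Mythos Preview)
Your approach for (i) and (iii) matches the paper's exactly, and the overall strategy for (ii) is also the same. However, your sign handling in (ii) is incorrect. From Corollary~\ref{Bockstein and Kummer}(b) (with $m=n=q$, evaluated at $\zeta_q\in H^0(G_F,\mu_q)$ and using the identification $\mu_q\isom\dbZ/q$) one obtains
\[
\beta_{G_F}(\psi)=\psi\cup\kappa_q(\zeta_q).
\]
Anti-commutativity of the cup product in degree~$1$ gives $\psi\cup\kappa_q(\zeta_q)=-\kappa_q(\zeta_q)\cup\psi$, but this does \emph{not} turn the displayed identity into $\psi\cup\kappa_q(\zeta_q)+\beta_{G_F}(\psi)=0$; with your choice $\xi=\kappa_q(\zeta_q)$ you get $\psi\cup\xi+\beta_{G_F}(\psi)=2\beta_{G_F}(\psi)$, which is nonzero for $p$ odd whenever $\beta_{G_F}(\psi)\neq0$. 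The fix is not to flip the order of the factors but simply to absorb the sign into $\xi$: take $\xi=-\kappa_q(\zeta_q)$, as the paper does, and then $\psi\cup\xi+\beta_{G_F}(\psi)=-\psi\cup\kappa_q(\zeta_q)+\psi\cup\kappa_q(\zeta_q)=0$ for every $\psi$.
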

\begin{proof}
By definition, the Steinberg group is generated by elements $a(F^\times)^q\tensor b(F^\times)^q$
which are mapped to $0$ in $K^M_2(F)/q$.
Now use the surjectivity (resp., injectivity) of the upper (resp., lower) horizontal map
in (\ref{Galois symbols}) to deduce (i).

By Proposition \ref{Bockstein and Kummer}, for $\psi\in H^1(G_F)$
one has $\beta_{G_F}(\psi)\cup\zeta_q=\psi\cup\kappa_q(\zeta_q)$, where on the left hand side we consider
$\zeta_q$ as an element of $H^0(G_F,\mu_q)$.
Identifying $\mu_q$ with $\dbZ/q$ via $\zeta_q^i\mapsto\bar i$, we get
$\beta_{G_F}(\psi)=\psi\cup\kappa_q(\zeta_q)$ in $H^2(G_F,\dbZ/q)=H^2(G_F,\mu_q)$.
Thus (ii) holds by taking $\xi=-\kappa_q(\zeta_q)$.

Finally, let $1\leq i\leq d$.
By Kummer theory, the natural epimorphism $F^\times/(F^\times)^q\to F^\times/(F^\times)^{p^i}$ yields
an epimorphism $H^1(G_F,\dbZ/q)\to H^1(G_F,\dbZ/p^i)$, proving (iii).
\end{proof}

\begin{rem}  \rm
\label{Galois nature - relative Galois groups}
Using also the surjectivity of the Galois symbol in dimension $2$, one can strengthen Proposition
\ref{Galois nature} to Galois groups $G=\Gal(E/F)$, where $E/F$ is a Galois extension,
$F$ contains a primitive $q$th root of unity, and $E$ has no proper $p$-extensions.
Indeed, then $H^1(G_E)=0$.
In (\ref{Galois symbols}) all maps are surjective.
Applying it for $E$, we obtain that $H^2(G_E)=0$ as well.
It therefore follows from the Hochschild--Serre spectral sequence that
$\inf_{G_F}\colon H^i(G)\to H^i(G_F)$ is an isomorphism for $i=1,2$ \cite{NeukirchSchmidtWingberg}*{Cor.\ 2.4.2}.
Since the cup product and the Bockstein homomorphisms commute with inflation,
conditions (i)--(iii) for $G_F$ now transform into the analogous conditions for $G$.
\end{rem}

\section{Cohomology elements of simple type}
For a profinite group $G$ we define an abelian group $\Omega(G)$ by
\[
\Omega(G)=
\begin{cases}
H^1(G)\tensor_\dbZ H^1(G), & \mathrm{if\ } q=2, \\
\bigl(H^1(G)\tensor_\dbZ H^1(G)\bigr)\oplus H^1(G), & \mathrm{if\ } q\neq2. \\
\end{cases}
\]
Define a homomorphism $\Lam_G\colon \Omega(G)\to H^2(G)$ as follows:
\[
\begin{split}
\Lam_G(\alp)=\cup\alp,\qquad\qquad &\mathrm{\quad if\ } q=2, \\
\Lam_G(\alp_1,\alp_2)=\cup\alp_1+\beta_G(\alp_2), &\mathrm{\quad if\ } q\neq2. \\
\end{split}
\]

The map $G\mapsto \Omega(G)$ is functorial.
Given an epimorphism $G_1\to G_2$ of profinite groups, the inflation map
$\inf_{G_1}\colon H^1(G_2)\to H^1(G_1)$ induces a homomorphism
$\inf_{G_1}\colon \Omega(G_2)\to \Omega(G_1)$ with a commutative square:
\begin{equation}
\label{inf and Lambda commute}
\xymatrix{
\Omega(G_2) \ar[r]^{\inf_{G_1}}  \ar[d]_{\Lam_{G_2}}  &  \Omega(G_1) \ar[d]^{\Lam_{G_1}}  \\
H^2(G_2) \ar[r]^{\inf_{G_1}}  &   H^2(G_1). \\
}
\end{equation}

\begin{lem}
\label{surjective maps}
Assume that $G$ has Galois relation type and let $\tilde G=G/G^{(2)}$.
Then $\Lam_{\tilde G}$ is surjective.
\end{lem}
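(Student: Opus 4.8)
The plan is to reduce the statement to Corollary \ref{elementary abelian}, the computation of $H^2$ of an elementary abelian pro-$p$ group. Write $\tilde G=G/G^{(2)}$. By definition of the descending $q$-central sequence $G^{(2)}=G^q[G,G]$, so $\tilde G$ is an abelian profinite group of exponent dividing $q$. Unwinding the definitions of $\Omega$ and $\Lam_{\tilde G}$, the image of $\Lam_{\tilde G}$ is the subgroup of $H^2(\tilde G)$ generated by the decomposable part $H^2_\dec(\tilde G)$ together with the image of $\beta_{\tilde G}$ when $q\neq2$, and is exactly $H^2_\dec(\tilde G)$ when $q=2$. Thus the assertion to be proved is precisely that $H^2(\tilde G)$ is generated by $H^2_\dec(\tilde G)$ and $\mathrm{Im}(\beta_{\tilde G})$ (resp.\ equals $H^2_\dec(\tilde G)$ when $q=2$).

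The first step is to identify $\tilde G$. Since $\dbZ/p^i$ has exponent dividing $q$ for $1\le i\le d$, every continuous homomorphism $G\to\dbZ/p^i$ is trivial on $G^q[G,G]=G^{(2)}$; hence inflation gives an isomorphism $H^1(\tilde G,\dbZ/p^i)\xrightarrow{\sim}H^1(G,\dbZ/p^i)$, compatibly with the maps induced by the natural surjections $\dbZ/q\to\dbZ/p^i$. Taking $i=1$, condition (iii) of Galois relation type therefore says that $\Hom(\tilde G,\dbZ/q)\to\Hom(\tilde G,\dbZ/p)$ is surjective. Passing to Pontryagin duals, the discrete torsion abelian $p$-group $A=\tilde G^\vee$ satisfies $p^dA=0$ and $A[p]=p^{d-1}A$. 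By Pr\"ufer's theorem $A\cong\bigoplus_j\dbZ/p^{e_j}$ with $e_j\le d$, and comparing $A[p]=\bigoplus_j(\dbZ/p^{e_j})[p]$ with $p^{d-1}A=\bigoplus_{e_j=d}p^{d-1}\dbZ/p^{e_j}$ forces every nontrivial summand to be cyclic of order exactly $q$. Consequently $\tilde G\cong\prod_{j\in J}\dbZ/q$ for some index set $J$. (When $q=2$ this step is empty, since then $\tilde G$ has exponent $2$ to begin with.)

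It remains to deal with the possibly infinite product. Writing $\tilde G$ as the inverse limit of its finite quotients $(\dbZ/q)^{J_0}$, with $J_0\subseteq J$ finite, the groups $H^1(\tilde G)$ and $\Omega(\tilde G)$ are the direct limits of the $H^1((\dbZ/q)^{J_0})$ and $\Omega((\dbZ/q)^{J_0})$, and $H^2(\tilde G)$ is the direct limit of the $H^2((\dbZ/q)^{J_0})$. Since cup products and $\beta$ commute with inflation, the commutative square (\ref{inf and Lambda commute}) exhibits $\Lam_{\tilde G}$ as the direct limit of the maps $\Lam_{(\dbZ/q)^{J_0}}$, each of which is surjective by Corollary \ref{elementary abelian}. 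A filtered direct limit of surjections is surjective, so $\Lam_{\tilde G}$ is surjective.

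I expect the only subtle point to be the identification $\tilde G\cong\prod_{j\in J}\dbZ/q$: one must check carefully that condition (iii) descends to $\tilde G$ and then that the resulting surjectivity of $\Hom(\tilde G,\dbZ/q)\to\Hom(\tilde G,\dbZ/p)$ really does kill every cyclic summand of order $<q$. The passage to the limit and the appeal to Corollary \ref{elementary abelian} are routine. Note that only condition (iii) of Galois relation type is used here; conditions (i) and (ii) play no role in this lemma.
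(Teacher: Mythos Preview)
Your proof is correct and follows essentially the same route as the paper's: identify $\tilde G$ as an inverse limit of groups of the form $(\dbZ/q)^n$ using condition (iii), apply Corollary~\ref{elementary abelian} to each finite stage, and pass to the direct limit. The only difference is one of explicitness---you spell out the Pontryagin-dual/Pr\"ufer argument for why $\tilde G\cong\prod_J\dbZ/q$, whereas the paper simply asserts that $\tilde G$ is an inverse limit of groups $(\dbZ/q)^{n_i}$; your observation that only condition (iii) is used is also implicit in the paper's proof.
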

\begin{proof}
For $1\leq i\leq d$, the natural map $H^1(G)\to H^1(G,\dbZ/p^i)$ is just the natural map
$\Hom(\tilde G,\dbZ/q)\to \Hom(\tilde G,\dbZ/p^i)$, so by Definition \ref{Galois relation type}(iii),
it is surjective.
Since additionally $\tilde G$ is abelian of exponent dividing $q$, it is therefore an inverse limit of
finite groups $\tilde G_j$ of the form $(\dbZ/q)^{n_j}$.
By Corollary \ref{elementary abelian}, each $H^2(\tilde G_j)$ is generated by the images of
$\cup$ and $\beta_{\tilde G_j}$ (and of $\cup$ only, if $q=2$).
Hence each $\Lam_{\tilde G_j}$ is surjective.
Conclude that $\Lam_{\tilde G}=\dirlim{\Lam_{\tilde G_j}}$ is surjective.
\end{proof}

\begin{defin}  \rm
\label{simple type}
We say that $\alp\in \Omega(G)$ has \textbf{simple type} if either:
\begin{enumerate}
\item[(i)]
$q=2$ and $\alp=\psi\tensor\psi'$ for some $\psi,\psi'\in H^1(G)$; or
\item[(ii)]
$q\neq2$ and $\alp=(\psi\tensor\psi',\psi)$ for some $\psi,\psi'\in H^1(G)$.
\end{enumerate}
\end{defin}
\noindent
In the situation of Definition \ref{simple type}, we say that $M=\Ker(\psi)\cap\Ker(\psi')$ is a \textbf{kernel} of $\alp$ (it may depend on $\psi,\psi'$).
Observe that $M$ is a normal open subgroup of $G$ and that $(\psi,\psi')$ induce an embedding of $G/M$ in $(\dbZ/q)^2$.
Hence $G^{(2)}\leq M$.
Note that inflation homomorphisms map simple type elements to simple type elements.

\begin{prop}
\label{decomposition}
Assume that $G$ has Galois relation type.
Then the group $\Ker(\Lam_G)$ is generated by elements of simple type.
\end{prop}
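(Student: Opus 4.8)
The plan is to split into the cases $q=2$ and $q\neq2$. When $q=2$ there is essentially nothing to prove: then $\Omega(G)=H^1(G)\tensor_\dbZ H^1(G)$ and $\Lam_G=\cup$, so $\Ker(\Lam_G)=\Ker(\cup)$, which by Definition~\ref{Galois relation type}(i) is generated by elements $\psi\tensor\psi'$ with $\psi\cup\psi'=0$; these are precisely the simple type elements (Definition~\ref{simple type}(i)) lying in $\Ker(\Lam_G)$.

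For $q\neq2$ the idea is to use the element $\xi$ furnished by Definition~\ref{Galois relation type}(ii), which gives $\beta_G(\psi)=-\psi\cup\xi$ for every $\psi\in H^1(G)$, in order to trade the Bockstein summand for a cup product and so reduce to condition~(i). Given $(\alp_1,\alp_2)\in\Ker(\Lam_G)$, i.e.\ $\cup\alp_1+\beta_G(\alp_2)=0$, I would substitute $\beta_G(\alp_2)=-\cup(\alp_2\tensor\xi)$ to obtain $\cup(\alp_1-\alp_2\tensor\xi)=0$, so that $\alp_1-\alp_2\tensor\xi\in\Ker(\cup)$. Condition~(i) then lets me write $\alp_1-\alp_2\tensor\xi=\sum_j\psi_j\tensor\psi_j'$ with each $\psi_j\cup\psi_j'=0$ (after absorbing integer coefficients into the left tensor factors), whence in $\Omega(G)$
\[
(\alp_1,\alp_2)=(\alp_2\tensor\xi,\alp_2)+\sum_j(\psi_j\tensor\psi_j',0).
\]
It then remains to express each summand on the right as a $\dbZ$-linear combination of simple type elements that lie in $\Ker(\Lam_G)$.

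The first summand $(\alp_2\tensor\xi,\alp_2)$ is already of simple type, and $\Lam_G(\alp_2\tensor\xi,\alp_2)=\alp_2\cup\xi+\beta_G(\alp_2)=0$, so it lies in the kernel. For the remaining summands $(\psi_j\tensor\psi_j',0)$ the naive splitting $(\psi_j\tensor\psi_j',\psi_j)-(0,\psi_j)$ is useless, since neither piece is simultaneously of simple type and in $\Ker(\Lam_G)$; the right move instead is
\[
(\psi_j\tensor\psi_j',0)=\bigl(\psi_j\tensor(\psi_j'+\xi),\psi_j\bigr)-(\psi_j\tensor\xi,\psi_j),
\]
where both terms on the right are of simple type, and a one-line computation of $\Lam_G$ using $\psi_j\cup\psi_j'=0$ together with $\beta_G(\psi_j)=-\psi_j\cup\xi$ shows both lie in $\Ker(\Lam_G)$. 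Assembling these expresses $(\alp_1,\alp_2)$ in the desired form. The one genuinely non-formal point is the observation that $\xi$ lets one replace $\beta_G$ by a cup product, reducing everything to~(i); after that the only thing needing care is this last bit of bookkeeping with the displayed decomposition.
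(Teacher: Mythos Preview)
Your proof is correct and follows essentially the same route as the paper's: both use $\xi$ to subtract off the simple type element $(\alp_2\tensor\xi,\alp_2)$ (the paper writes $\alp_2=\psi_0$), reduce to an element of $\Ker(\cup)\oplus\{0\}$, apply condition~(i), and then split each resulting $(\psi_j\tensor\psi_j',0)$ via the identical identity $(\psi_j\tensor(\psi_j'+\xi),\psi_j)-(\psi_j\tensor\xi,\psi_j)$.
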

\begin{proof}
For $q=2$, this is just Definition \ref{Galois relation type}(i).

So suppose that $q\neq2$ and let $\alp\in\Ker(\Lam_G)$.
There exists $\psi_0\in H^1(G)$ with $\alp-(0,\psi_0)\in (H^1(G)\tensor H^1(G))\oplus\{0\}$.
Take $\xi\in H^1(G)$ as in Definition \ref{Galois relation type}(ii).
Thus $\psi_0\cup\xi+\beta_G(\psi_0)=0$, i.e.,
$\Lam_G(\psi_0\tensor\xi,\psi_0)=0$.
Let $\alp'=\alp-(\psi_0\tensor\xi,\psi_0)$.
Then $\alp'\in (H^1(G)\tensor H^1(G))\oplus\{0\}$ and $\Lam_G(\alp')=\Lam_G(\alp)=0$.
By Definition \ref{Galois relation type}(i), there exist $\psi_i,\psi'_i\in H^1(G)$, $i=1\nek n$, with
$\alp'=\sum_{i=1}^n(\psi_i\tensor\psi'_i,0)$
and $\psi_i\cup\psi'_i=0$ for all $i$.
For each $i$, we have $\Lam_G(\psi_i\tensor\xi,\psi_i)=0$.
Then
\begin{equation}
\label{decomposition to simple type elements}
\alp=(\psi_0\tensor\xi,\psi_0)+\sum_{i=1}^n(\psi_i\tensor(\psi'_i+\xi),\psi_i)-\sum_{i=1}^n(\psi_i\tensor\xi,\psi_i).
\end{equation}
Here all summands are simple type elements in $\Ker(\Lam_G)$.
\end{proof}

\begin{lem}
\label{on distinguished}
Let $\alp\in \Ker(\Lam_G)$ have simple type and kernel $M$.
Then there exist $\varphi\in H^1(M)^G$ and $\bar\alp\in \Omega(G/M)$ of simple type and with trivial kernel,
such that $\inf_G(\bar\alp)=\alp$ and $\Lam_{G/M}(\bar\alp)=\trg_{G/M}(\varphi)$.
\end{lem}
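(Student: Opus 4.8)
The plan is to lift the simple-type element $\alp$ along the quotient map $\pi\colon G\to G/M$, and then to locate $\Lam_{G/M}$ of this lift inside the image of the transgression by means of the inflation--restriction sequence.

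First I would make the lift explicit. By Definition~\ref{simple type} we have $\alp=\psi\tensor\psi'$ if $q=2$, and $\alp=(\psi\tensor\psi',\psi)$ if $q\neq2$, where in both cases $M=\Ker(\psi)\cap\Ker(\psi')$. Since $M\leq\Ker(\psi)$ and $M\leq\Ker(\psi')$, both $\psi$ and $\psi'$ factor through $\pi$; write $\psi=\inf_G(\bar\psi)$ and $\psi'=\inf_G(\bar\psi')$ with $\bar\psi,\bar\psi'\in H^1(G/M)$. Set $\bar\alp=\bar\psi\tensor\bar\psi'$ if $q=2$, and $\bar\alp=(\bar\psi\tensor\bar\psi',\bar\psi)$ if $q\neq2$. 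Then $\bar\alp$ has simple type in $\Omega(G/M)$, and since the induced map $\inf_G\colon\Omega(G/M)\to\Omega(G)$ is $\inf_G\tensor\inf_G$ on the tensor summand and $\inf_G$ on the $H^1$ summand, one reads off $\inf_G(\bar\alp)=\alp$. Finally, $\pi\inv(\Ker(\bar\psi))=\Ker(\psi)$ and likewise for $\psi'$, so
\[
\Ker(\bar\psi)\cap\Ker(\bar\psi')=\bigl(\Ker(\psi)\cap\Ker(\psi')\bigr)/M=M/M=1 ,
\]
that is, $\bar\alp$ has trivial kernel.

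Next I would produce $\varphi$. Applying the commutative square (\ref{inf and Lambda commute}) to the epimorphism $G\to G/M$ gives
\[
\inf_G\bigl(\Lam_{G/M}(\bar\alp)\bigr)=\Lam_G\bigl(\inf_G(\bar\alp)\bigr)=\Lam_G(\alp)=0 ,
\]
the last equality because $\alp\in\Ker(\Lam_G)$. Hence $\Lam_{G/M}(\bar\alp)$ lies in the kernel of $\inf_G\colon H^2(G/M)\to H^2(G)$. Since $M$ is a closed normal subgroup of $G$, the $5$-term exact sequence (\ref{5-term sequence}) taken with $N=M$ identifies that kernel with the image of $\trg_{G/M}\colon H^1(M)^G\to H^2(G/M)$. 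Therefore there is some $\varphi\in H^1(M)^G$ with $\trg_{G/M}(\varphi)=\Lam_{G/M}(\bar\alp)$, which is exactly what the lemma asserts.

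The whole argument is essentially a diagram chase, so I do not anticipate a genuine obstacle. The two points that call for a little care are: keeping track of the specific pair $\psi,\psi'$ that witnesses the given kernel $M$, so that the constructed $\bar\alp$ indeed has \emph{trivial} kernel and not merely some kernel; and recognising that the hypothesis $\Lam_G(\alp)=0$ is precisely the input that places $\Lam_{G/M}(\bar\alp)$ in the image of the transgression. Because the statement only requires the \emph{existence} of $\varphi$, no further normalisation is needed.
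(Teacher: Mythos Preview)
Your proof is correct and follows essentially the same route as the paper's: lift $\psi,\psi'$ through $G\to G/M$ to build $\bar\alp$, then use the commutative square (\ref{inf and Lambda commute}) together with the $5$-term sequence (\ref{5-term sequence}) to extract $\varphi$. You in fact spell out the verification that $\bar\alp$ has trivial kernel more explicitly than the paper does.
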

\begin{proof}
Take $\psi,\psi'$ as in Definition \ref{simple type} with $M=\Ker(\psi)\cap\Ker(\psi')$.
There exist $\bar\psi,\bar\psi'\in H^1(G/M)$ such that $\inf_G(\bar\psi)=\psi$ and  $\inf_G(\bar\psi')=\psi'$.
We define $\bar\alp\in \Omega(G/M)$  to be $\bar\psi\tensor\bar\psi'$, if $q=2$,
and $(\bar\psi\tensor\bar\psi',\bar\psi)$, if $q\neq2$.
Thus $\bar\alp$ has simple type and trivial kernel, and $\inf_G(\bar\alp)=\alp$.
By (\ref{inf and Lambda commute}) and (\ref{5-term sequence}), there is a commutative diagram with an exact row
\[
\xymatrix{
& \Omega(G/M) \ar[r]^{\inf_G}\ar[d]_{\Lam_{G/M}} & \Omega(G) \ar[d]^{\Lam_G} \\
H^1(M)^G \ar[r]^{\trg_{G/M}} & H^2(G/M) \ar[r]^{\inf_G} & H^2(G).
}
\]
It yields $\varphi\in H^1(M)^G$ as required.
\end{proof}

\begin{defin}
\label{distinguished}  \rm
Call a subgroup $N$ of $G$ \textbf{distinguished}
if there is an open subgroup $M$ of  $G$ and elements $\varphi\in H^1(M)^G$ and $\bar\alp\in \Omega(G/M)$,
with $\bar\alp$ of simple type and with trivial kernel, such that
\[
\Lam_{G/M}(\bar\alp)=\trg_{G/M}(\varphi),\quad  N=\Ker(\varphi).
\]
\end{defin}
In this case we say that $M, \varphi, \bar\alp$ are \textbf{data} for $N$.

\begin{rem} \rm
Since $\bar\alp\in \Omega(G/M)$ has trivial kernel, $G/M$ embeds in $(\dbZ/q)^2$.
Hence $(G:N)=(G:M)(M:N)|q^3$ and $G^{(2)}\leq M$.
Also, the exponent of $G/N$ divides $q^2$.
\end{rem}

\begin{exam}
\label{example of distinguished}
\rm
For every $\psi\in H^1(G)$, the subgroup $M=\Ker(\psi)$ of $G$ is distinguished.
Indeed, take $\bar\psi\in H^1(G/M)$ with $\inf_G(\bar\psi)=\psi$ and set $\bar\alp=0\in \Omega(G/M)$.
Trivially, $\bar\alp=\bar\psi\tensor0$ if $q=2$, and $\bar\alp=(0\tensor\bar\psi,0)$ if $q\neq2$.
Thus $\bar\alp$ has simple type and trivial kernel.
For $\varphi=0\in H^1(M)^G$ we have $\trg_{G/M}(\varphi)=\Lam_{G/M}(\bar\alp)=0$ and $M=\Ker(\varphi)$.
\end{exam}

\section{$G^{(3)}$ as an intersection}
Let $G$ be again a profinite group, and let $\Del_G$ be the intersection of all distinguished subgroups of $G$.

\begin{prop}
\label{remarks on Del}
$G^{(3)}\leq\Del_G\leq G^{(2)}$.
\end{prop}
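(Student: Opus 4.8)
The statement to prove is $G^{(3)}\leq\Del_G\leq G^{(2)}$.

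For the inclusion $\Del_G\leq G^{(2)}$: recall that by the Remark following Definition~\ref{distinguished}, any distinguished subgroup $N$ with data $M,\varphi,\bar\alp$ satisfies $G^{(2)}\leq M$ and $N=\Ker(\varphi)$ with $\varphi\in H^1(M)^G$. But actually the cleanest route is Example~\ref{example of distinguished}: for every $\psi\in H^1(G)$ the subgroup $\Ker(\psi)$ is distinguished, so $\Del_G\leq\bigcap_{\psi\in H^1(G)}\Ker(\psi)$. By Lemma~\ref{Pontryagin} applied with $N=G$, this intersection is exactly $G^q[G,G]=G^{(2)}$. That gives the right-hand inclusion.

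For the inclusion $G^{(3)}\leq\Del_G$: it suffices to show $G^{(3)}\leq N$ for each distinguished subgroup $N$. So fix $N$ with data $M,\varphi,\bar\alp$, where $\bar\alp\in\Omega(G/M)$ has simple type and trivial kernel and $\Lam_{G/M}(\bar\alp)=\trg_{G/M}(\varphi)$, $N=\Ker(\varphi)$. Since $\bar\alp$ has trivial kernel, $G/M$ embeds in $(\dbZ/q)^2$, hence $M\supseteq G^{(2)}$; in particular $M$ is open and we may form $G/N$, whose exponent divides $q^2$. I want to show $\varphi$ is trivial on $G^{(3)}$, i.e.\ $G^{(3)}\subseteq\Ker(\varphi)=N$.

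\begin{proof}
We first prove $\Del_G\leq G^{(2)}$. By Example~\ref{example of distinguished}, for every $\psi\in H^1(G)$ the subgroup $\Ker(\psi)$ is distinguished. Hence
\[
\Del_G\ \leq\ \bigcap_{\psi\in H^1(G)}\Ker(\psi).
\]
Since $H^1(G)=H^1(G,\dbZ/q)=\Hom(G,\dbZ/q)$, Lemma~\ref{Pontryagin} applied to the normal subgroup $N=G$ gives $\bigcap_{\psi\in H^1(G)}\Ker(\psi)=G^q[G,G]=G^{(2)}$. Thus $\Del_G\leq G^{(2)}$.

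Next we prove $G^{(3)}\leq\Del_G$, for which it suffices to show $G^{(3)}\leq N$ for every distinguished subgroup $N$ of $G$. Fix such an $N$, with data $M,\varphi,\bar\alp$ as in Definition~\ref{distinguished}: thus $M$ is open, $\varphi\in H^1(M)^G$, $\bar\alp\in\Omega(G/M)$ has simple type and trivial kernel, $\Lam_{G/M}(\bar\alp)=\trg_{G/M}(\varphi)$, and $N=\Ker(\varphi)$. Since $\bar\alp$ has trivial kernel, $G/M$ embeds into $(\dbZ/q)^2$, so $G^{(2)}\leq M$. We must show that $\varphi$ vanishes on $G^{(3)}$.

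Consider the subgroup $N'=M^{(2)}=M^q[M,M]$ of $M$; it is a closed normal subgroup of $G$. Apply diagram~(\ref{trg res and inf commute}) with $N$ replaced by $M$ and $N'$ as above:
\[
\xymatrix{
 H^1(M)^G \ar[r]^{\trg_{G/M}}\ar[d]_{\res_{N'}} &   H^2(G/M)\ar[d]^{\inf_{G/N'}} \\
 H^1(N')^G \ar[r]^{\trg_{G/N'}} &  H^2(G/N')  .
}
\]
Here $\res_{N'}(\varphi)$ is the image of $\varphi$ under $H^1(M)^G\to H^1(N')^G$; but $N'=M^{(2)}\leq\Ker(\varphi)$ by Lemma~\ref{Pontryagin} (as $\varphi$ is a homomorphism $M\to\dbZ/q$, hence kills $M^q[M,M]$), so $\res_{N'}(\varphi)=0$ in $H^1(N')$. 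Chasing the diagram, $\inf_{G/N'}(\trg_{G/M}(\varphi))=\trg_{G/N'}(\res_{N'}(\varphi))=0$, that is,
\[
\inf_{G/N'}(\Lam_{G/M}(\bar\alp))=0
\]
in $H^2(G/N')$. By the functoriality square~(\ref{inf and Lambda commute}) for the epimorphism $G/N'\to G/M$, this equals $\Lam_{G/N'}(\inf_{G/N'}(\bar\alp))$. Thus writing $\bar\alp'=\inf_{G/N'}(\bar\alp)\in\Omega(G/N')$ (again of simple type), we have $\bar\alp'\in\Ker(\Lam_{G/N'})$.

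Now $N'=M^{(2)}$ and $G^{(2)}\leq M$, so $M/N'$ is abelian of exponent dividing $q$ and $G^{(2)}/N'$ is a normal subgroup of $G/N'$ contained in the abelian subgroup $M/N'$. Since $(G/M)$ embeds in $(\dbZ/q)^2$, the group $G/N'$ is a central-by-finite-elementary... more precisely, $[G,M]\leq N'$ shows $M/N'$ is central in $G/N'$; and $(G/N')/(M/N') \cong G/M$ is abelian of exponent $q$. Hence $G^q[G,G]=G^{(2)}\leq M$ has image in $G/N'$ contained in the central subgroup $M/N'$ of exponent $q$, so $(G/N')^{(3)}=((G/N')^{(2)})^q[(G/N')^{(2)},G/N']$ is trivial: indeed $(G/N')^{(2)}\leq M/N'$ which has exponent $q$ and is central, killing both the $q$-power and commutator generators. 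Therefore $G^{(3)}\leq N'$.

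Finally, $N'=M^{(2)}\leq\Ker(\varphi)=N$ by Lemma~\ref{Pontryagin} as noted above. Combining, $G^{(3)}\leq N'\leq N$. As $N$ was an arbitrary distinguished subgroup, $G^{(3)}\leq\Del_G$, completing the proof.
\end{proof}

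Remark on the main obstacle: the one delicate point is the bookkeeping showing $G^{(3)}\leq M^{(2)}$, which rests on $[G,M]\leq M^{(2)}$ and $(G:M^{(2)})$ having the stated structure (a central extension of $G/M\hookrightarrow(\dbZ/q)^2$ by an elementary abelian group). Once that structural fact is in hand, everything else is diagram-chasing through~(\ref{trg res and inf commute}) and~(\ref{inf and Lambda commute}) together with the Pontryagin lemma; no serious computation is involved.
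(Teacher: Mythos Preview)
Your proof of $\Del_G\leq G^{(2)}$ is fine and matches the paper's argument.

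For $G^{(3)}\leq\Del_G$, however, there is a genuine gap: the assertion ``$[G,M]\leq N'$'' (with $N'=M^{(2)}=M^q[M,M]$) is unjustified and in fact false in general. For a concrete counterexample, let $p$ be odd, let $G$ be the free pro-$p$ group on generators $x,y$, and let $M=\Ker(\psi)$ for the epimorphism $\psi\colon G\to\dbZ/p$ sending $x\mapsto\bar1$, $y\mapsto\bar0$. Using a Schreier basis one checks that the image of $[x,y]$ in $M/M^{(2)}$ is nonzero; hence $[G,M]\not\leq M^{(2)}$, $M/M^{(2)}$ is not central in $G/M^{(2)}$, and indeed $G^{(3)}\not\leq M^{(2)}$. (Such an $M$ does arise as the data of a distinguished subgroup, e.g.\ via Example~\ref{example of distinguished}.) So the route through $N'=M^{(2)}$ cannot work, and the diagram-chase through (\ref{trg res and inf commute}) and (\ref{inf and Lambda commute}) is an irrelevant detour.

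The fix is to use the full strength of the hypothesis $\varphi\in H^1(M)^G$: $G$-invariance means $\varphi$ vanishes on $[M,G]$, not merely on $[M,M]$. Lemma~\ref{Pontryagin} then gives $\Ker(\varphi)\supseteq M^q[M,G]$, and since $G^{(2)}\leq M$ one has immediately
\[
G^{(3)}=(G^{(2)})^q[G^{(2)},G]\ \leq\ M^q[M,G]\ \leq\ \Ker(\varphi)=N.
\]
This is exactly the paper's one-line argument.
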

\begin{proof}
Let $N$ be a distinguished subgroup of $G$.
Thus there exists an open normal subgroup $M$ of $G$ and $\varphi\in H^1(M)^G$ such that
$\Ker(\varphi)=N$ and $G^{(2)}\leq M$.
Hence Lemma \ref{Pontryagin} gives
\[
G^{(3)}=(G^{(2)})^q[G^{(2)},G]\leq M^q[M,G]\leq\Ker(\varphi)=N.
\]
Consequently, $G^{(3)}\leq \Del_G$.

By Lemma \ref{Pontryagin} again, $\bigcap_{\psi\in H^1(G)}\Ker(\psi)=G^{(2)}$.
Since each $\Ker(\psi)$ is distinguished  (Example \ref{example of distinguished}),
we get that $\Del_G\leq G^{(2)}$.
\end{proof}

\begin{thm}
\label{formal main theorem}
If $G$ has Galois relation type, then
$G^{(3)}=\Del_G$.
\end{thm}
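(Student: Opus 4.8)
### Proof Proposal

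The plan is to establish the two inclusions in Proposition \ref{remarks on Del} as equality, and since that proposition already gives $G^{(3)}\leq\Del_G$, the real content is the reverse inclusion $\Del_G\leq G^{(3)}$. Equivalently, I must show that every element of $G\setminus G^{(3)}$ is excluded by some distinguished subgroup, i.e.\ that $\bigcap N = G^{(3)}$ where $N$ ranges over distinguished subgroups. The natural dual formulation, via the Pontryagin duality of Lemma \ref{Pontryagin} applied to $G^{(2)}$, is: the group $H^1(G^{(2)})^G$, which by the second corollary to Lemma \ref{Pontryagin} is dual to $G^{(2)}/G^{(3)}$, should be spanned by the classes $\res_{G^{(2)}}$-images of the transgressions $\trg_{G/M}(\varphi)$ coming from distinguished data. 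More precisely, I want to show that if $N$ is an intersection of finitely many distinguished subgroups then $\bigcap N \supseteq G^{(3)}$ already forces, in the limit, $\Del_G \leq M^q[M,G]$ for suitable $M$; the cleanest route is to work on the quotient $\tilde G = G/G^{(2)}$ and push back down.

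The key steps, in order: (1) Reduce to showing that for every $\varphi \in H^1(G^{(2)})^G$ there is a distinguished subgroup $N$ with $N \cap G^{(2)} \subseteq \Ker(\varphi)$ — because then the intersection of all such $N$ lies in $\bigcap_\varphi \Ker(\varphi) = (G^{(2)})^q[G^{(2)},G] = G^{(3)}$ by Lemma \ref{Pontryagin}. (2) Given such a $\varphi$, apply the $5$-term exact sequence (\ref{5-term sequence}) for the pair $G^{(2)} \trianglelefteq G$: the transgression $\trg_{G/G^{(2)}}(\varphi) \in H^2(G/G^{(2)}) = H^2(\tilde G)$ is a well-defined class, and by exactness it dies in $H^2(G)$, i.e.\ it lies in $\Ker(\inf_G) = \mathrm{image\ of\ }\trg$. (3) Use Lemma \ref{surjective maps}: since $G$ has Galois relation type, $\Lam_{\tilde G}\colon \Omega(\tilde G) \to H^2(\tilde G)$ is surjective, so choose $\alp \in \Omega(\tilde G)$ with $\Lam_{\tilde G}(\alp) = \trg_{G/G^{(2)}}(\varphi)$. (4) Apply Proposition \ref{decomposition} — but to the right group. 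Here is the crucial point: $\trg_{G/G^{(2)}}(\varphi)$ is killed by $\inf_G$, hence $\Lam_G$ of its pullback is $0$, so (after pulling $\alp$ back to $\Omega(G)$ via a section, or better, working with the commutative square (\ref{inf and Lambda commute})) the relevant element lies in $\Ker(\Lam_G)$; by Proposition \ref{decomposition} it is a sum of simple type elements $\alp_j = (\psi_j \tensor \psi'_j, \psi_j)$, each with a kernel $M_j$. (5) For each simple type summand, invoke Lemma \ref{on distinguished} to produce $\varphi_j \in H^1(M_j)^G$ and a simple type $\bar\alp_j \in \Omega(G/M_j)$ with trivial kernel such that $\Lam_{G/M_j}(\bar\alp_j) = \trg_{G/M_j}(\varphi_j)$ — these are exactly data for distinguished subgroups $N_j = \Ker(\varphi_j)$. (6) Finally, track through the diagram (\ref{trg res and inf commute}) (with $N' = G^{(2)} \leq M_j = N$) to see that $\sum_j \res_{G^{(2)}}(\varphi_j)$ agrees with $\varphi$ up to an element coming from $H^1(G)$ (which is trivial on $G^{(2)}$ anyway, since $H^1(G) = H^1(G/G^{(2)})$), so that $\bigcap_j \Ker(\varphi_j) \cap G^{(2)} \subseteq \Ker(\varphi)$, completing step (1).

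The main obstacle I anticipate is step (6), the bookkeeping that the transgressions $\trg_{G/M_j}(\varphi_j)$ of the distinguished data genuinely reassemble — via restriction and inflation, using the compatibility square (\ref{trg res and inf commute}) — to recover the original $\varphi$ on $G^{(2)}$, rather than merely some class with the same image in $H^2(\tilde G)$. The subtlety is that $\trg$ is only injective modulo the image of $\res_N \colon H^1(G) \to H^1(N)^G$, so one must argue that the ambiguity is absorbed by $H^1(G)$-classes, which are trivial on $G^{(2)}$; this is where Definition \ref{Galois relation type}(iii) and the identification $H^1(G) = H^1(\tilde G)$ (used in the proof of Lemma \ref{surjective maps}) re-enter. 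A secondary technical point is making the passage between $\Omega(\tilde G)$ and $\Omega(G)$ precise: one should note that $\inf_G \colon \Omega(\tilde G) \to \Omega(G)$ need not be surjective, so rather than lifting $\alp$ one works with its image and uses that $\Ker(\inf_G \colon H^2(\tilde G) \to H^2(G))$ is precisely the image of $\trg$, together with functoriality of $\Lam$. Once these compatibilities are in place, the finitely-many-$N_j$ argument combined with the profinite limit (intersecting over all $\varphi$) yields $\Del_G \leq G^{(3)}$, and with Proposition \ref{remarks on Del} the theorem follows.
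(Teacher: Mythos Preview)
Your proposal is correct and follows essentially the same route as the paper's proof. Your anticipated obstacles in step~(6) dissolve once you observe that $\inf_G\colon H^1(\tilde G)\to H^1(G)$ is an \emph{isomorphism} (every $\psi\in H^1(G)$ vanishes on $G^{(2)}$ by Lemma~\ref{Pontryagin}), so that $\inf_G\colon\Omega(\tilde G)\to\Omega(G)$ is likewise an isomorphism and, by the $5$-term sequence, $\trg_{\tilde G}$ is genuinely injective --- there is no ambiguity to absorb, and the equality $\varphi=\sum_j\res_{G^{(2)}}(\varphi_j)$ holds on the nose.
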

\begin{proof}
By Proposition \ref{remarks on Del}, $G^{(3)}\leq \Del_G$.

For the converse inclusion, let $\tilde G=G/G^{(2)}$.
It follows from Lemma \ref{Pontryagin} (with $N=G$) that the map $\res_{G^{(2)}}\colon H^1(G)\to H^1(G^{(2)})$ is trivial.
Hence, by (\ref{5-term sequence}), $\inf_G\colon H^1(\tilde G)\to H^1(G)$ is an isomorphism.
Consequently, $\inf_G\colon \Omega(\tilde G)\to \Omega(G)$ is also an isomorphism.

Now let $\varphi\in H^1(G^{(2)})^G$.
By Lemma \ref{surjective maps}, $\Lam_{\tilde G}$ is surjective, so there exists $\tilde\alp\in \Omega(\tilde G)$
with $\trg_{\tilde G}(\varphi)=\Lam_{\tilde G}(\tilde\alp)$.
By (\ref{inf and Lambda commute}) and (\ref{5-term sequence}),
\[
\Lam_G(\textstyle{\inf_G}(\tilde\alp))=\textstyle{\inf_G}(\Lam_{\tilde G}(\tilde\alp))=\inf_G(\trg_{\tilde G}(\varphi))=0.
\]
By Proposition \ref{decomposition} we may therefore write $\inf_G(\tilde\alp)=\sum_{i=1}^n\alp_i$,
where $\alp_1\nek\alp_n\in \Ker(\Lam_G)$ have simple type.

For each $0\leq i\leq n$ let $M_i$ be a kernel for $\alp_i$.
Recall that $G^{(2)}\leq M_i$, so (\ref{inf and Lambda commute}) again gives a commutative diagram
\begin{equation}
\label{cd}
\xymatrix{
\Omega(G/M_i) \ar[r]^{\inf_{\tilde G}}\ar[d]_{\Lam_{G/M_i}} & \Omega(\tilde G) \ar[r]^{\inf_G}\ar[d]^{\Lam_{\tilde G}} & \Omega(G)\ar[d]^{\Lam_G} \\
H^2(G/M_i) \ar[r]^{\inf_{\tilde G}} & H^2(\tilde G) \ar[r]^{\inf_G} & H^2(G).
}
\end{equation}
Lemma \ref{on distinguished} gives rise to $\bar\alp_i\in \Omega(G/M_i)$ of simple type and with
trivial kernel and to $\varphi_i\in H^1(M_i)^G$ such that $\inf_G(\bar\alp_i)=\alp_i$ and
$\Lam_{G/M_i}(\bar\alp_i)=\trg_{G/M_i}(\varphi_i)$.
In particular, $\Ker(\varphi_i)$ is distinguished.
For each $i$ let $\tilde\alp_i=\inf_{\tilde G}(\bar\alp_i)$.
It also has simple type, and one has $\alp_i=\inf_G(\tilde\alp_i)$.
By (\ref{cd}), $\inf_G(\Lam_{\tilde G}(\tilde\alp_i))=0$.
Moreover,
\[
\textstyle{\inf_G}(\tilde\alp)=\sum_{i=1}^n\alp_i=\sum_{i=1}^n\textstyle{\inf_G}(\tilde\alp_i).
\]
But $\inf_G\colon \Omega(\tilde G)\to \Omega(G)$ is an isomorphism, so $\tilde\alp=\sum_{i=1}^n\tilde\alp_i$.

Next (\ref{5-term sequence}) and (\ref{trg res and inf commute}) give a commutative diagram with an exact row:
\begin{equation*}
\xymatrix{
  & H^1(M_i)^G\ar[r]^{\trg_{G/M_i}} \ar[d]_{\res_{G^{(2)}}} & H^2(G/M_i)\ar[d]^{\inf_{\tilde G}}  \\
 0\ar[r] & H^1(G^{(2)})^G \ar[r]^{\trg_{\tilde G}} & H^2(\tilde G). \\
 }
\end{equation*}
Using this and (\ref{cd}) we compute:
\[
\begin{split}
\trg_{\tilde G}(\varphi)
=\Lam_{\tilde G}(\tilde\alp)
&=\sum_{i=1}^n\Lam_{\tilde G}(\tilde\alp_i)=\sum_{i=1}^n\Lam_{\tilde G}(\textstyle{\inf_{\tilde G}}(\bar\alp_i))=\sum_{i=1}^n{\textstyle{\inf_{\tilde G}}}(\Lam_{G/M_i}(\bar\alp_i)) \\
&=\sum_{i=1}^n(\textstyle{\inf_{\tilde G}}\circ\trg_{G/M_i})(\varphi_i)=\sum_{i=1}^n(\trg_{\tilde G}\circ\res_{G^{(2)}})(\varphi_i).\\
\end{split}
\]
Since  $\trg_{\tilde G}$ is injective, $\varphi=\sum_{i=1}^n\res_{G^{(2)}}(\varphi_i)$, so by Proposition \ref{remarks on Del},
\[
\Ker(\varphi)\geq\bigcap_{i=1}^n\Ker(\res_{G^{(2)}}(\varphi_i))=G^{(2)}\cap\bigcap_{i=1}^n\Ker(\varphi_i)
\geq G^{(2)}\cap \Del_G=\Del_G.
\]
Since $\varphi\in H^1(G^{(2)})^G$ was arbitrary, we deduce from Lemma \ref{Pontryagin} that
\[
G^{(3)}=(G^{(2)})^q[G^{(2)},G]=\bigcap_{\varphi\in H^1(G^{(2)})^G}\Ker(\varphi)\geq\Del_G.
\qedhere
\]
\end{proof}

\begin{cor}
Let $G$ be a profinite group of Galois relation type.
Then $G^{(3)}$ is an intersection of normal open subgroups $N$ of $G$ with $G/N$ of order dividing $q^3$
and exponent dividing $q^2$.
\end{cor}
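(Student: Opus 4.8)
The plan is to deduce this corollary directly from Theorem \ref{formal main theorem} together with the structural remarks on distinguished subgroups already recorded in \S4. Since $G$ has Galois relation type, Theorem \ref{formal main theorem} gives $G^{(3)}=\Del_G$, where $\Del_G$ is by definition the intersection of all distinguished subgroups of $G$. So it suffices to check that every distinguished subgroup $N$ is a normal open subgroup with $G/N$ of order dividing $q^3$ and exponent dividing $q^2$.

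The verification of these properties for a single distinguished subgroup $N$ is exactly the content of the Remark following Definition \ref{distinguished}. The plan is to recall that argument: if $M,\varphi,\bar\alp$ are data for $N$, then $\bar\alp\in\Omega(G/M)$ has trivial kernel, so the defining homomorphisms force $G/M$ to embed in $(\dbZ/q)^2$; in particular $(G:M)$ divides $q^2$. Next, $\varphi\in H^1(M)^G$ is a homomorphism $M\to\dbZ/q$ trivial on $M^q[M,G]$, so $N=\Ker(\varphi)$ is normal in $G$ with $(M:N)$ dividing $q$; hence $(G:N)=(G:M)(M:N)$ divides $q^3$, and in particular $N$ is open and normal. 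Finally, since $G^{(2)}\le M$ and $\varphi$ kills $M^q$, one gets $G^{(3)}=(G^{(2)})^q[G^{(2)},G]\le M^q[M,G]\le N$, so $N\supseteq G^{(3)}$; combined with $M^q\le N$ this shows $(G/N)^{q^2}$ is trivial, i.e.\ the exponent of $G/N$ divides $q^2$. (Alternatively, the exponent bound follows from $G^{(3)}\le N$ since $G/G^{(3)}$ has exponent dividing $q^2$.)

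Putting these together, $G^{(3)}=\Del_G$ is the intersection of a family of normal open subgroups $N\lhd G$ each satisfying $|G/N|\mid q^3$ and $\exp(G/N)\mid q^2$, which is precisely the assertion. I do not anticipate a genuine obstacle here: the only mild subtlety is that the corollary should really read ``intersection of \emph{all} distinguished subgroups'' rather than a cleverly chosen subfamily, but that is exactly how $\Del_G$ was defined, so nothing extra is needed. One could, if desired, also note that the trivial distinguished subgroups $\Ker(\psi)$ (Example \ref{example of distinguished}) already show the family is nonempty, though this is not required for the statement.
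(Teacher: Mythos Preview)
Your proposal is correct and follows exactly the intended route: the paper states this corollary without proof immediately after Theorem~\ref{formal main theorem}, relying on that theorem together with the Remark after Definition~\ref{distinguished}, and you have simply filled in those details. The exponent bound is most cleanly obtained, as you note, from $g^q\in M$ (since $G/M\hookrightarrow(\dbZ/q)^2$) and $M^q\le\Ker(\varphi)=N$.
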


\section{Extensions}
Let $\bar G$ be a finite group and $A$ a finite trivial $\bar G$-module.
We consider central extensions
\[
\omega:\qquad 0\ \to\ A\ \xrightarrow{f} B\ \xrightarrow{g}\ \bar G\ \to\ 1.
\]
For a group isomorphism $\theta\colon\bar G'\to\bar G$ define an extension
\[
\omega^{\theta}:\quad 0\ \to\ A\ \xrightarrow{f} B\ \xrightarrow{\theta\inv\circ g}\ \bar G'\ \to\ 1.
\]
When there is a commutative diagram of central extensions
\[
\xymatrix{
\omega: & 0\ar[r] & A\ar[r]^{f}\ar@{=}[d] & B\ar[r]^{g}\ar[d]^{h}_{\wr} & \bar G\ar[r]\ar@{=}[d] & 1 \\
\omega': & 0\ar[r] & A'\ar[r]^{f} & B'\ar[r]^{g'} & \bar G\ar[r] & 1, \\
}
\]
with $h$ an isomorphism, $\omega$ and $\omega'$ are called \textbf{equivalent}.
Let $\Ext(\bar G,A)$ be the set of all equivalence classes $[\omega]$ of extensions $\omega$ as above.

The \textbf{Baer sum} \cite{CartanEilenberg56}*{Ch.\ XIV, \S1}  of central extensions
\[
\omega_i:\qquad 0\ \to\ A\xrightarrow{f_i}\ B_i\ \xrightarrow{g_i}\ \bar G\ \to\ 1, \quad i=1,2,
\]
is the central extension
\[
0\ \to\ A\xrightarrow{\overline{(f_1,1)}=\overline{(1,f_2)}}\ B\ \xrightarrow{g_1=g_2}\ \bar G\ \to\ 1.
\]
where for the fibred product $B_1\times_{\bar G} B_2$ we set
\[
B=(B_1\times_{\bar G} B_2)/\{(f_1(a),f_2(a)\inv)\ |\ a\in A\}.
\]
This induces an abelian group structure on $\Ext(\bar G,A)$, which is functorial in $\bar G$
(contravariantly) and in $A$ (covariantly).

There is a canonical isomorphism  $\Ext(\bar G,A)\isom H^2(\bar G,A)$ which is
functorial in both $\bar G$ and $A$ \cite{NeukirchSchmidtWingberg}*{Th.\ 1.2.4}.
Specifically, the cohomology class of an inhomogeneous normalized $2$-cocycle $\alp\colon \bar G^2\to A$
corresponds to the class of $[\omega]$, where $B=A\times \bar G$ as sets, and the group law is given for $a,b\in A$ and $\sig,\tau\in \bar G$ by
\begin{equation}
\label{group law}
(a,\sig)*(b,\tau)=(a+b+\alp(\sig,\tau),\sig\tau).
\end{equation}
Conversely, given $\omega$ as above, choose a set-theoretic section $s\colon \bar G\to B$ of $g$ with $s(1)=1$.
The map $\alp\colon \bar G\times \bar G\to A$, given by
$\alp(\bar \sig_1,\bar\sig_2)=s(\bar\sig_1)s(\bar\sig_2)s(\bar\sig_1\bar \sig_2)\inv$,
is an inhomogenous normalized $2$-cocyle whose cohomology class corresponds to $[\omega]$.

\begin{rem}[\cite{GilleSzamuely}*{Remark 3.3.11}, \cite{Ledet05}*{p.\ 33}] \rm
Let $\bar G\to\tilde G$ be an epimorphism and let $A$ be a $\tilde G$-module, whence a $\bar G$-module in the natural way.
Then $\inf_{\bar G}\colon H^2(\tilde G,A)\to H^2(\bar G,A)$ corresponds
to the map $\inf_{\bar G}\colon \Ext(\tilde G,A)\to \Ext(\bar G,A)$ sending the class of
\[
\tilde\omega:\qquad 0\ \to\  A\ \xrightarrow{f}\  B\ \xrightarrow{g}\ \tilde G\ \to\  1
\]
to the class of
\[
{\textstyle \inf_{\bar G}}(\tilde\omega):\quad  0\to A\xrightarrow{(f,1)}
B\times_{\tilde G}\bar G\xrightarrow{(b,\bar\sig)\mapsto \bar\sig} \bar G\to 1.
\]
\end{rem}

In particular we have:

\begin{lem}
\label{inflation to a direct product}
Suppose that $\bar G=\tilde G\times\tilde G'$ and let $\tilde\omega$ be as above.
Then $\inf_{\bar G}(\tilde\omega)$ is equivalent to
\[
0 \to\ A\ \xrightarrow{(f,1)} \ B\times \tilde G'\ \xrightarrow{g\times\id}\ \tilde G\times\tilde G'\ \to\ 1.
\]
\end{lem}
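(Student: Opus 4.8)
The plan is to instantiate the general inflation-to-fibered-product description of the previous remark in the special case where the ambient group splits as a direct product $\bar G = \tilde G \times \tilde G'$, with the given epimorphism being the first projection $\mathrm{pr}_1\colon \tilde G\times\tilde G'\to\tilde G$. Under that remark, $\inf_{\bar G}(\tilde\omega)$ is represented by the extension
\[
0\to A\xrightarrow{(f,1)} B\times_{\tilde G}(\tilde G\times\tilde G')\xrightarrow{(b,\bar\sig)\mapsto\bar\sig}\tilde G\times\tilde G'\to1,
\]
so the only thing to prove is that the fibered product $B\times_{\tilde G}(\tilde G\times\tilde G')$, taken over the map $g\colon B\to\tilde G$ on one side and $\mathrm{pr}_1$ on the other, is naturally isomorphic (as an extension of $\tilde G\times\tilde G'$ by $A$) to $B\times\tilde G'$ with the obvious structure maps.

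First I would write down the explicit set-theoretic description: by definition
\[
B\times_{\tilde G}(\tilde G\times\tilde G')=\{(b,(\bar\sig,\bar\sig'))\in B\times(\tilde G\times\tilde G')\ :\ g(b)=\bar\sig\}.
\]
The coordinate $\bar\sig$ is thus redundant, being forced to equal $g(b)$, so the map $(b,(\bar\sig,\bar\sig'))\mapsto(b,\bar\sig')$ is a bijection onto $B\times\tilde G'$ with inverse $(b,\bar\sig')\mapsto(b,(g(b),\bar\sig'))$. Next I would check that this bijection is a group homomorphism — immediate, since multiplication in the fibered product is componentwise and $g$ is a homomorphism — and that it is compatible with the inclusion of $A$ and the projection to $\tilde G\times\tilde G'$. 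For the inclusion: $(f,1)$ sends $a\in A$ to $(f(a),(1,1))$, which maps to $(f(a),1)$, matching the map $(f,1)\colon A\to B\times\tilde G'$ in the target extension. For the projection: $(b,(\bar\sig,\bar\sig'))\mapsto(\bar\sig,\bar\sig')=(g(b),\bar\sig')$, which under the bijection becomes $(b,\bar\sig')\mapsto(g(b),\bar\sig')=(g\times\id)(b,\bar\sig')$. Hence the square of extensions commutes and the middle vertical arrow is an isomorphism, so the two extensions are equivalent in the sense defined in \S6.

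This argument is entirely formal; there is no real obstacle. The only point requiring a moment's care is bookkeeping: making sure the structure maps of the target extension $0\to A\xrightarrow{(f,1)}B\times\tilde G'\xrightarrow{g\times\id}\tilde G\times\tilde G'\to1$ are exactly the images of those of the fibered-product extension under the bijection, i.e.\ that the identification respects both $A$ and the quotient simultaneously, which is what is checked in the previous paragraph. Everything then follows from the remark preceding the lemma together with the fact that equivalence of central extensions is what the isomorphism $\Ext(\bar G,A)\cong H^2(\bar G,A)$ records.
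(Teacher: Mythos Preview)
Your proof is correct and follows exactly the same approach as the paper: the paper's proof simply displays the commutative triangle with the horizontal map $(b,(\tilde\sig,\tilde\sig'))\mapsto(b,\tilde\sig')$ from $B\times_{\tilde G}\bar G$ to $B\times\tilde G'$ and asserts it is an isomorphism. You have supplied the routine verifications (bijectivity, group homomorphism, compatibility with the inclusion of $A$ and the projection to $\bar G$) that the paper leaves implicit.
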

\begin{proof}
Use the commutative triangle
\[
\xymatrix{
B\times_{\tilde G}\bar G \ar[rr]^{(b,(\tilde\sig,\tilde\sig'))\mapsto(b,\tilde\sig')}
\ar[rd]_{(b,(\tilde\sig,\tilde\sig'))\mapsto(\tilde\sig,\tilde\sig')} & & B\times \tilde G' \ar[ld]^{g\times\id} \\
& \bar G
}
\]
where the horizontal map is an isomorphism.
\end{proof}

\section{Embedding problems}
Let $G$ be a profinite group.
The following proposition is due to Hoechsmann \cite{Hoechsmann68}*{2.1}:

\begin{prop}
\label{Hoechsmann}
Let $M$ be an open normal subgroup of $G$.
Consider the embedding problem
\begin{equation}
\label{embedding problem}
\xymatrix{
&&&&   G  \ar[dl]_{\Phi} \ar[d] \\
\omega: &0\ar[r] &  A\ar[r] &   B\ar[r]  & G/M\ar[r] & 1
}
\end{equation}
where $A$ is a finite $G/M$-module, and let $\alp\in H^2(G/M,A)$ be the cohomology class corresponding to $[\omega]$.
Then the restriction map $\Phi\mapsto\varphi=\Phi|_M$ is a surjection from
\begin{enumerate}
\item[(a)]
the continuous homomorphisms $\Phi\colon G\to B$ making (\ref{embedding problem}) commutative; to
\item[(b)]
the elements $\varphi$ of $H^1(M,A)^G$ with $\trg_{G/M}(\varphi)=\alp$.
\end{enumerate}
In particular, there exists $\Phi$ as in (a) if and only if $\inf_G(\alp)=0$.
\end{prop}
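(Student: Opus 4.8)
The plan is to unwind the correspondence between group extensions and $2$-cocycles recalled in \S6, and to match it against the Hochschild--Serre $5$-term sequence (\ref{5-term sequence}). First I fix a set-theoretic normalized section $s\colon G/M\to B$ of $g$, so that the associated $2$-cocycle $\alpha(\bar\sigma_1,\bar\sigma_2)=s(\bar\sigma_1)s(\bar\sigma_2)s(\bar\sigma_1\bar\sigma_2)\inv$ represents $[\omega]$, and I realize $B$ concretely as $A\times (G/M)$ with the twisted multiplication (\ref{group law}). Given a continuous homomorphism $\Phi\colon G\to B$ lifting the projection $\pi\colon G\to G/M$, write $\Phi(\sigma)=(c(\sigma),\pi(\sigma))$ for a continuous map $c\colon G\to A$. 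The homomorphism condition on $\Phi$ translates, via (\ref{group law}), into the identity $c(\sigma\tau)=c(\sigma)+\sigma c(\tau)+\alpha(\pi(\sigma),\pi(\tau))$, i.e.\ $c$ is an inhomogeneous $1$-cochain on $G$ with valued in $A$ whose coboundary is the pullback $\pi^*\alpha$. Restricting to $M$, where $\pi$ is trivial and $A$ is a trivial $M$-module, this says exactly that $\varphi=\Phi|_M=c|_M\colon M\to A$ is a homomorphism, and a short computation with $\sigma\in G$, $m\in M$ shows $\varphi$ is $G$-invariant, i.e.\ $\varphi\in H^1(M,A)^G$. This produces the map (a)$\to$(b).

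Next I show this map is a bijection by exhibiting its inverse. Given $\varphi\in H^1(M,A)^G$, the transgression $\trg_{G/M}(\varphi)$ is computed (as in \cite{NeukirchSchmidtWingberg}*{\S2.1}) by choosing any extension of $\varphi$ to a continuous cochain $c\colon G\to A$ with $c|_M=\varphi$; then $d c$ is inflated from a $2$-cocycle on $G/M$ whose class is $\trg_{G/M}(\varphi)$. The hypothesis $\trg_{G/M}(\varphi)=\alpha$ in $H^2(G/M,A)$ means $dc$ and $\pi^*\alpha$ differ by the coboundary of a cochain inflated from $G/M$; adjusting $c$ by that cochain (which does not change $c|_M=\varphi$, since such a cochain kills $M$... more precisely is constant along $M$-cosets and normalized) we may assume $dc=\pi^*\alpha$ on the nose. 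Then $\Phi(\sigma)=(c(\sigma),\pi(\sigma))$ is a continuous homomorphism $G\to B$ making (\ref{embedding problem}) commute, and $\Phi|_M=\varphi$. That these two constructions are mutually inverse is then a formal check: starting from $\Phi$, the cochain $c$ is forced, and starting from $\varphi$ the only freedom in choosing $c$ is the ambiguity just discussed, which is exactly the fiber of the restriction map over $\varphi$.

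Finally, for the last assertion: by exactness of (\ref{5-term sequence}), the transgression $\trg_{G/M}\colon H^1(M,A)^G\to H^2(G/M,A)$ has image equal to $\Ker(\inf_G\colon H^2(G/M,A)\to H^2(G,A))$. Hence there exists $\varphi\in H^1(M,A)^G$ with $\trg_{G/M}(\varphi)=\alpha$ if and only if $\inf_G(\alpha)=0$, and by the bijection just established this is equivalent to the existence of a solution $\Phi$ of the embedding problem. The main obstacle is bookkeeping: tracking normalization of cochains and the precise description of transgression in inhomogeneous terms so that ``$\trg_{G/M}(\varphi)=\alpha$'' can be upgraded from an equality of cohomology classes to an equality of cocycles after an admissible modification of $c$; this is where one must be careful that the modifying cochain genuinely restricts trivially to $M$, so as not to disturb $\varphi$.
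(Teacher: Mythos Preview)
The paper does not supply its own proof of this proposition: it simply attributes the result to Hoechsmann \cite{Hoechsmann68}*{2.1}, and adds the one-line observation that the final assertion follows from the bijection together with exactness of the $5$-term sequence (\ref{5-term sequence}). Your cocycle-level argument is precisely the standard way one would unpack Hoechsmann's statement, and your treatment of the ``in particular'' clause agrees with the paper's remark.

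That said, there is a genuine gap in your argument for the \emph{bijection} itself, and it reflects an imprecision in the proposition as stated. Your construction shows that $\Phi\mapsto\Phi|_M$ lands in the set (b) and is surjective onto it; but it does not show injectivity, and in fact injectivity fails. Two solutions $\Phi_1,\Phi_2$ of (\ref{embedding problem}) differ by a $1$-cocycle $\delta\in Z^1(G,A)$, and the condition $\Phi_1|_M=\Phi_2|_M$ forces only $\delta|_M=0$, i.e.\ $\delta$ is inflated from $Z^1(G/M,A)$. Thus each fibre of the restriction map is a torsor under $Z^1(G/M,A)$, which is generally nontrivial. Concretely, take $G=\dbZ/p^2$, $M=p\dbZ/p^2$, $A=\dbZ/p$ with trivial action, and $\omega=\omega_2$: the solutions are the $p$ endomorphisms $x\mapsto(1+jp)x$ of $\dbZ/p^2$ ($0\le j<p$), and they all restrict to the same isomorphism $M\xrightarrow{\sim}A$. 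Your own closing sentence (``the only freedom in choosing $c$ \dots is exactly the fibre of the restriction map over $\varphi$'') actually identifies this obstruction rather than disposing of it.

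None of this affects the paper's use of the proposition: Proposition \ref{distinguished and embedding problems} only invokes the well-definedness of $\Phi\mapsto\varphi$ and the surjectivity (existence of $\Phi$ over a given $\varphi$), both of which your argument does establish. But you should be aware that the literal ``bijection'' claim, as worded, is too strong, and your proof plan does not---and cannot---close that gap.
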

Here, the last sentence follows from the bijection using (\ref{5-term sequence}).

Now suppose that $A=\dbZ/q$ with the trivial $G$-action, where $q=p^d$.

\begin{prop}
\label{distinguished and embedding problems}
Let $M$ be an open normal subgroup of $G$ and let $\bar\alp\in \Omega(G/M)$ have simple type and trivial kernel.
The following conditions on an open subgroup $N$ of $M$ are equivalent:
\begin{enumerate}
\item[(a)]
$N$ is a distinguished subgroup of $G$ with data $M,\bar\alp$;
\item[(b)]
$N$ is normal in $G$ and there is a commutative diagram
\[
\xymatrix{
&&&& G/N \ar[d]\ar[dl]_{h} \\
\omega: & 0\ar[r] & \dbZ/q \ar[r] & B\ar[r] & G/M\ar[r] & 1,
}
\]
where $\omega$ is an extension corresponding to $\Lam_{G/M}(\bar\alp)$, the vertical
map is the natural projection, and  $h$ is a monomorphism.
\end{enumerate}
\end{prop}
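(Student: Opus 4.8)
The plan is to prove the equivalence by unwinding both conditions into statements about homomorphisms $G\to B$ and their restrictions to $M$, and then invoking Proposition \ref{Hoechsmann}. First I would fix notation: write $\varphi\in H^1(M)^G$ for an element with $\trg_{G/M}(\varphi)=\Lam_{G/M}(\bar\alp)$, and let $\omega$ be a fixed central extension $0\to\dbZ/q\to B\to G/M\to1$ whose class in $H^2(G/M)\isom\Ext(G/M,\dbZ/q)$ is $\Lam_{G/M}(\bar\alp)$. Since by definition $N$ is distinguished with data $M,\bar\alp$ exactly when there is some $\varphi\in H^1(M)^G$ with $\trg_{G/M}(\varphi)=\Lam_{G/M}(\bar\alp)$ and $N=\Ker(\varphi)$, the task is to match such $\varphi$'s with the injective lifts $h\colon G/N\to B$ in (b).

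Next I would apply Proposition \ref{Hoechsmann} to the embedding problem given by $\omega$ and the quotient map $\Phi\colon G\to G/M$: it says that homomorphisms $\Phi\colon G\to B$ lifting $G\to G/M$ correspond bijectively, via $\Phi\mapsto\Phi|_M$, to elements $\varphi\in H^1(M)^G$ with $\trg_{G/M}(\varphi)=\Lam_{G/M}(\bar\alp)$. (Here I should note that $\Phi|_M$ really lands in $\dbZ/q=\Ker(B\to G/M)$, so it is legitimately an element of $H^1(M)$, and its $G$-invariance is part of Hoechsmann's statement.) So given data $M,\varphi,\bar\alp$ for $N$, I get a homomorphism $\Phi\colon G\to B$ with $\Phi|_M=\varphi$, hence $\Ker(\Phi)\cap M=\Ker(\varphi)=N$; and since $\Ker(\Phi)\subseteq M$ (because $B\to G/M$ composed with $\Phi$ is the quotient map, whose kernel is $M$), in fact $\Ker(\Phi)=N$. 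Thus $N$ is normal in $G$ and $\Phi$ factors through an injection $h\colon G/N\hookrightarrow B$ making the triangle in (b) commute. Conversely, given (b), the composite $G\to G/N\xrightarrow{h}B$ is a homomorphism $\Phi$ lifting $G\to G/M$, so by Hoechsmann $\varphi:=\Phi|_M\in H^1(M)^G$ satisfies $\trg_{G/M}(\varphi)=\Lam_{G/M}(\bar\alp)$; and $\Ker(\varphi)=\Ker(\Phi)\cap M=\Ker(\Phi)=N$ (again using $\Ker(\Phi)\subseteq M$ and the injectivity of $h$, which gives $\Ker(\Phi)=N$). Hence $M,\varphi,\bar\alp$ are data for $N$, i.e. (a) holds.

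The one point requiring a little care — and the place I'd expect the main friction — is the identification of $\Ker(\Phi)$ with $N$ in both directions, specifically the claim $\Ker(\Phi)\subseteq M$. This follows because $g\circ\Phi\colon G\to G/M$ is the canonical projection (commutativity of the square/triangle with the right vertical map), so $\Ker(g\circ\Phi)=M$ and therefore $\Ker(\Phi)\subseteq\Ker(g\circ\Phi)=M$; combined with $\Ker(\Phi)\cap M=\Ker(\varphi)$ this yields $\Ker(\Phi)=\Ker(\varphi)$. Everything else is a formal transcription between the language of Definition \ref{distinguished} (data $M,\varphi,\bar\alp$) and the language of commutative diagrams, using the canonical isomorphism $H^2(G/M)\isom\Ext(G/M,\dbZ/q)$ recalled in \S6 to pass between the cohomology class $\Lam_{G/M}(\bar\alp)$ and the extension $\omega$, and the fact that a homomorphism $G\to B$ lifting the projection induces an \emph{injective} map on $G/N$ precisely when $N$ is its kernel. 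I would also remark that different choices of $\varphi$ with the same kernel $N$ are allowed in the definition of distinguished subgroup, so the equivalence is genuinely at the level of the subgroup $N$ together with the fixed data $M,\bar\alp$, not of the auxiliary cocycle.
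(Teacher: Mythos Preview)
Your proof is correct and follows essentially the same route as the paper: both directions go through Proposition~\ref{Hoechsmann} to pass between lifts $\Phi\colon G\to B$ and elements $\varphi\in H^1(M)^G$ with $\trg_{G/M}(\varphi)=\Lam_{G/M}(\bar\alp)$, and then identify $\Ker(\Phi)$ with $N$. Your explicit verification that $\Ker(\Phi)\subseteq M$ (via $g\circ\Phi$ being the projection) is exactly the content of the paper's line ``It follows that $h$ is also injective,'' just unpacked more carefully.
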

\begin{proof}
(a)$\Rightarrow$(b):\quad
By assumption, there exists $\varphi\in H^1(M)^G$ such that
$\Lam_{G/M}(\bar\alp)=\trg_{G/M}(\varphi)\in H^2(G/M)$ and $N=\Ker(\varphi)$.
In particular, $N$ is normal in $G$.
Choose a central extension $\omega$ as above corresponding to $\Lam_{G/M}(\bar\alp)$.
Proposition \ref{Hoechsmann} yields a continuous homomorphism $\Phi\colon G\to B$ such that
(\ref{embedding problem}) commutes and $\varphi=\Phi|_M$.
Then $N=\Ker(\varphi)=M\cap\Ker(\Phi)$.
Consequently, $\Phi$ induces  a homomorphism $h\colon G/N\to B$ whose restriction to $M/N$ is injective.
It follows that $h$ is also injective.

\medskip

(b)$\Rightarrow$(a):\quad
Lift $h$ to a homomorphism $\Phi\colon G\to B$ with kernel $N$.
Then (\ref{embedding problem}) commutes.
Then $\varphi=\Phi|_M\in H^1(M)^G$.
By Proposition \ref{Hoechsmann}, $\Lam_{G/M}(\bar\alp)=\trg_{G/M}(\varphi)$
and $N=M\cap\Ker(\Phi)=\Ker(\varphi)$, giving (a).
\end{proof}

\section{Special extensions}
Proposition \ref{distinguished and embedding problems} allows an explicit determination of the
distinguished subgroups $N$ of a profinite group $G$ by means of the quotients $G/N$.
We now carry out this computation for $q=p$ prime, based on an analysis of several
central extensions of small $p$-groups.
We first recall the structure of the nonabelian groups of order $p^3$.
When $p=2$ these are:
\begin{itemize}
\item
the \textbf{dihedral group} of order $8$,
\[
D_4=\langle r,s\ |\ r^4=s^2=(rs)^2=1\rangle;
\]
\item
the \textbf{quaternionic group}
\[
Q_8=\langle r,s\ |\ r^4=1,\ [r,s]=r^2=s^2 \rangle.
\]
\end{itemize}
For $p$ odd, there are two isomorphism types of groups of order $p^3$:
\begin{itemize}
\item
the \textbf{Heisenberg group} of order $p^3$ and exponent $p$,
\[
H_{p^3}=\langle r,s,t\ |\ r^p=s^p=t^p=1,\ [r,t]=[s,t]=1,\ [r,s]=t \rangle;
\]
\item
the \textbf{extra-special group} of order $p^3$ and exponent $p^2$,
\[
M_{p^3}=\langle r,s\ |\ r^{p^2}=s^p=1,\  [r,s]=r^p\rangle. \
\]
\end{itemize}

\begin{rems}
\label{comments on Mp3 D8}
\rm
(a)\quad
When $p=2$ one has $M_8=D_4$.
However,  we will keep in this case the traditional notation $D_4$,
and write $M_{p^3}$ only when $p\neq2$.

\medskip

(b)\quad
Let $G$ be one of the groups $D_4$, $Q_8$, when $p=2$, or $H_{p^3}$, $M_{p^3}$, when $p\neq2$.
Then the unique normal subgroup of $G$ of order $p$ is its center, which coincides with the
Frattini subgroup $G^{(2)}$ \cite{MassyNguyenQuangDo77}*{\S3.1}.
Therefore $G^{(3)}=(G^{(2)})^p[G^{(2)},G]=1$.

\medskip

(c)\quad
In $M_{p^3}$ (for $p\neq2$) one has $r^js^i=s^ir^{(1+ip)j}$ for all $i,j\geq0$.
In particular,  $[s,r^p]=1$.
Further, by induction,
$(s^ir^j)^k=s^{ki}r^{(1+(k-1)ip/2)kj}$ for $k\geq0$.
It follows that $(s^ir^j)^p=1$ if and only if $p|j$.
\end{rems}

We define epimorphisms from these groups onto $(\dbZ/p)^2$ as follows:
\begin{align*}
&\rho\colon D_4\to(\dbZ/2)^2, \qquad\  r\mapsto(\bar1,\bar1),\ s\mapsto(\bar0,\bar1); \\
&\lam\colon H_{p^3}\to (\dbZ/p)^2, \qquad r\mapsto(\bar1,\bar0),\ s\mapsto(\bar0,\bar1), \ t\mapsto(\bar0,\bar0); \\
&\lam'\colon M_{p^3}\to (\dbZ/p)^2, \quad\ \, r\mapsto(\bar1,\bar0), \ s\mapsto(\bar0,\bar1).
\end{align*}

\begin{rem}
\label{subgroups of Mp3}
\rm
For later use we note that no proper subgroup of $D_4$ (resp., $M_{p^3}$) is mapped surjectively by $\rho$
(resp., $\lam'$).
\end{rem}

The following central extensions will be needed in the sequel:
\begin{align*}
&\omega_0: \qquad
0\ \to\  \dbZ/p\ \xrightarrow{\id}\ \dbZ/p\ \to\ 0\to\ 0;&   \\
&\omega_1: \qquad
0\ \to\ \dbZ/p\ \xrightarrow{\bar i\mapsto(\bar i,\bar0)}\ (\dbZ/p)^2\ \xrightarrow{(\bar i,\bar j)
\mapsto\bar j}\ \dbZ/p\ \to\ 0; \\
&\omega_2: \qquad
0\ \to \ \dbZ/p\ \xrightarrow{\bar i\mapsto \overline{pi}}\ \dbZ/p^2\ \xrightarrow{\bar i\mapsto\bar i}
\ \dbZ/p\ \to\ 0; & \\
& \omega_3: \qquad
0\ \to \ \dbZ/2\ \xrightarrow{\bar i\mapsto r^{2i}}\ D_4\ \xrightarrow{\rho} \ (\dbZ/2)^2\ \to\ 0; & \\
&\omega_4: \qquad
0\ \to\ \dbZ/p\ \xrightarrow{\bar i\mapsto t^i}\ H_{p^3}\ \xrightarrow{\lam}\ (\dbZ/p)^2\ \to\ 0 \qquad\ \ (p\neq2); &\\
&\omega_5: \qquad
0\ \to\ \dbZ/p\ \xrightarrow{\bar i\mapsto r^{pi}}\ M_{p^3}\ \xrightarrow{\lam'}\ (\dbZ/p)^2\ \to\ 0 \qquad (p\neq2); &\\
&\omega_6: \qquad
0\ \to\ \dbZ/p\ \xrightarrow{\bar i\mapsto (\overline{pi},0)}\ (\dbZ/p^2)\oplus(\dbZ/p)
\ \xrightarrow{(\bar i,\bar j)\mapsto(\bar i,\bar j)}\ (\dbZ/p)^2\ \to\ 0. & \\
\end{align*}
Thus $[\omega_0]$, $[\omega_1]$ are the trivial classes of $\Ext(0,\dbZ/p)$, $\Ext(\dbZ/p,\dbZ/p)$, respectively,
and $[\omega_1]$ is the inflation of $[\omega_0]$.
Likewise
\begin{equation}
\label{inflation of extensions}
\textstyle{\inf_{(\dbZ/p)^2}}([\omega_2])=[\omega_6]
\end{equation}
relative to the projection $\pr_1\colon(\dbZ/p)^2\to\dbZ/p$ on the first coordinate.

\begin{lem}
\label{Baer example}
For $p\neq2$, the Baer sum of $[\omega_4]$ and $[\omega_6]$ is $[\omega_5]$.
\end{lem}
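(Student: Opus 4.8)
The plan is to compute the Baer sum of $\omega_4$ and $\omega_6$ explicitly by choosing compatible set-theoretic sections of the two extensions, reading off the resulting $2$-cocycle on $(\dbZ/p)^2$ with values in $\dbZ/p$, and then exhibiting an isomorphism from the middle group of the Baer sum to $M_{p^3}$ carrying the kernel copy of $\dbZ/p$ to $\langle r^p\rangle$ and inducing $\lam'$ on the quotient. Concretely, I would first record that as a set the Baer sum group is
\[
B=(H_{p^3}\times_{(\dbZ/p)^2}((\dbZ/p^2)\oplus(\dbZ/p)))/\{(t^i,(\overline{-pi},0))\ |\ i\in\dbZ/p\},
\]
with the evident projection to $(\dbZ/p)^2$; its kernel is the image of the diagonal copies of $\dbZ/p$, which after the quotient is the single copy $\dbZ/p$ mapping as in the statement of the Baer sum.

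Next I would pick sections. For $\omega_4$, use the Heisenberg section $s_4(\bar a,\bar b)=r^as^b$ (with $0\le a,b<p$), whose normalized $2$-cocycle $c_4$ measures the failure of $r^as^b\cdot r^{a'}s^{b'}$ to equal $r^{a+a'}s^{b+b'}$; from $[r,s]=t$ and the commutation relations in $H_{p^3}$ one gets $c_4\big((\bar a,\bar b),(\bar a',\bar b')\big)=\bar a'b$ modulo the carries, i.e. the class of $\omega_4$ is represented (up to a coboundary) by the cocycle $(\bar a,\bar b),(\bar a',\bar b')\mapsto \overline{a'b}$. For $\omega_6$, use the section $s_6(\bar a,\bar b)=(\bar a,\bar b)\in(\dbZ/p^2)\oplus(\dbZ/p)$ with $0\le a<p$; its cocycle $c_6$ is the carry from the first coordinate: $c_6\big((\bar a,\bar b),(\bar a',\bar b')\big)$ equals $\bar 1$ if $a+a'\ge p$ and $\bar 0$ otherwise (after identifying the kernel $\dbZ/p\hookrightarrow\dbZ/p^2$, $\bar i\mapsto \overline{pi}$). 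The Baer sum is then represented by $c_5:=c_4+c_6$. Now I would check that this is exactly (a cocycle cohomologous to) the cocycle of $\omega_5$: using the section $s_5(\bar a,\bar b)=s^b r^a$ of $M_{p^3}\to(\dbZ/p)^2$ with $0\le a,b<p$ and the relations of Remark \ref{comments on Mp3 D8}(c) — namely $r^j s^i=s^i r^{(1+ip)j}$ and $(s^i r^j)^k=s^{ki}r^{(1+(k-1)ip/2)kj}$ — one computes $s_5(\bar a,\bar b)s_5(\bar a',\bar b')=s^{b+b'}r^{a'+(1+b'p)a}$, so the cocycle value in $\langle r^p\rangle\cong\dbZ/p$ is $\overline{a'b}$ plus the carry in $a+a'$, which is precisely $c_4+c_6$. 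Hence $[\omega_5]=[\omega_4]+[\omega_6]$ in $\Ext((\dbZ/p)^2,\dbZ/p)\cong H^2((\dbZ/p)^2,\dbZ/p)$, using the canonical isomorphism recalled in \S6.

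I expect the main obstacle to be bookkeeping rather than conceptual: one must be careful that the identifications of the three kernel copies of $\dbZ/p$ (in $\omega_4$ via $\bar i\mapsto t^i$, in $\omega_6$ via $\bar i\mapsto(\overline{pi},0)$, and in $\omega_5$ via $\bar i\mapsto r^{pi}$) are consistent with the sign conventions in the Baer sum construction of \S6, and that the carry terms are tracked correctly modulo $p$ versus modulo $p^2$. An alternative, slightly cleaner route that avoids cocycles almost entirely: note $\omega_6=\inf_{(\dbZ/p)^2}(\omega_2)$ by \eqref{inflation of extensions}, and that $\omega_4$ restricted along the inclusion $\dbZ/p\times 0\hookrightarrow(\dbZ/p)^2$ splits while $\omega_4$ restricted along $0\times\dbZ/p$ also splits, so $[\omega_4]$ lies in the "skew" part; then argue that $M_{p^3}$ is the unique (up to equivalence) central extension of $(\dbZ/p)^2$ by $\dbZ/p$ whose middle group is nonabelian of exponent $p^2$ and whose restriction to $0\times\dbZ/p$ is trivial, and check by inspection that the Baer sum group $B$ is nonabelian of exponent $p^2$ (it contains the image of $r\in H_{p^3}$ paired with an element of order $p^2$ in $\dbZ/p^2$, giving an element of order $p^2$, and the Heisenberg commutator $t$ survives, giving nonabelianness). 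Whichever route is taken, the final identification $B\cong M_{p^3}$ is forced by the classification of groups of order $p^3$ together with the exponent and restriction data, and compatibility with $\lam'$ is automatic since both induce the same map on $(\dbZ/p)^2$.
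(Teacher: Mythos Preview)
Your approach is correct but takes a different route from the paper's. The paper never passes to cocycles: it works directly with the fibered-product description of the Baer sum that you write down at the outset, presenting $H_{p^3}\times_{(\dbZ/p)^2}\bigl((\dbZ/p^2)\oplus(\dbZ/p)\bigr)$ explicitly as
\[
\tilde B=\langle \tilde r,\tilde s,\tilde t\ \mid\ \tilde r^{p^2}=\tilde s^p=\tilde t^p=1,\ [\tilde r,\tilde t]=[\tilde s,\tilde t]=1,\ [\tilde r,\tilde s]=\tilde t\rangle
\]
(via $\tilde r\mapsto(r,(\bar1,\bar0))$, $\tilde s\mapsto(s,(\bar0,\bar1))$, $\tilde t\mapsto(t,(\bar0,\bar0))$), and then observes that the Baer-sum quotient by $\{(t^i,(\overline{-pi},0))\}$ simply imposes the extra relation $\tilde t=\tilde r^{\,p}$, which is the presentation of $M_{p^3}$ on the nose. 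This is shorter and sidesteps exactly the sign and carry bookkeeping you flagged as the main obstacle. Your cocycle route is more computational but has the virtue of making the cohomology classes themselves visible, which fits naturally with the way \S9 matches extensions to cup products and Bocksteins.

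One concrete caution on your bookkeeping: with the paper's convention $[r,s]=r^{-1}s^{-1}rs=t$ and your section $s_4(\bar a,\bar b)=r^as^b$, one gets $s^br^{a'}=r^{a'}s^bt^{-a'b}$ and hence $c_4=\overline{-a'b}$, not $\overline{a'b}$; and with $s_5(\bar a,\bar b)=s^br^a$, your own displayed exponent $a'+(1+b'p)a=a+a'+ab'p$ gives $c_5=\overline{ab'}+\text{carry}$, not $\overline{a'b}+\text{carry}$. These are not cohomologous fixes for $p$ odd. The remedy is simply to use the \emph{same} ordering $s^br^a$ for both sections; then $c_4=\overline{ab'}$ and the equality $c_5=c_4+c_6$ holds on the nose, with no coboundary adjustment needed.
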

\begin{proof}
Let
\[
\tilde B=\langle \tilde r,\tilde s,\tilde t\ |\ \tilde r^{p^2}=\tilde s^p=\tilde t^p=1,
\ [\tilde r,\tilde t]=[\tilde s,\tilde t]=1,\ [\tilde r,\tilde s]=\tilde t \rangle.
\]
There is a commutative square
\[
\xymatrix{
\tilde B\ar[d]_{f}\ar[r]^{\qquad j\qquad} & H_{p^3}\ar[d]^{\lam} \\
\quad(\dbZ/p^2)\oplus(\dbZ/p)\quad \ar[r]^{\qquad(\bar i,\bar j)\mapsto(\bar i,\bar j)} & \quad(\dbZ/p)^2\quad,
}
\]
where $j$ maps $\tilde r,\tilde s,\tilde t$ to $r,s,t$, respectively,
and $f$ maps $\tilde r,\tilde s,\tilde t$ to $(\bar1,\bar0)$, $(\bar0,\bar1)$, $(\bar0,\bar0)$, respectively.
Moreover, this square is cartesian, i.e.,
\[
(j,f)\colon\tilde B\to H_{p^3}\times_{(\dbZ/p)^2}((\dbZ/p^2)\oplus(\dbZ/p))
\]
is an isomorphism.
The Baer sum is therefore the equivalence class of
\[
0\ \to\ \dbZ/p\ \xrightarrow{\bar i\mapsto \tilde t^i} \ B=
\tilde B/\langle\tilde t^i\cdot\tilde r^{-pi}\\ |\ \bar i\in\dbZ/p\rangle \ \xrightarrow{\lam\circ j}\ (\dbZ/p)^2\ \to\ 0.
\]
Hence $B$ is obtained from $\tilde B$ by adding the relation $\tilde t=\tilde r^p$.
Using Remark \ref{comments on Mp3 D8}(c) we deduce that
\[
B\isom\langle r,s\ |\ r^{p^2}=s^p=1,\ [s,r^p]=1,\ [r,s]=r^p\rangle=M_{p^3},
\]
and the Baer sum is $[\omega_5]$.
\end{proof}

\section{Extensions and simple type elements}

We assume again that $q=p$ is prime.
Let $\bar G$ be a finite group.
In this section we compute the extensions corresponding to $\Lam_{\bar G}(\bar\alp)$ for
$\bar\alp\in \Omega(\bar G)$ of simple type and trivial kernel.
Some of these facts are quite well-known in a Galois setting, as is systematically described in
Ledet's book \cite{Ledet05} (see also \cite{Frohlich85}*{7.7}, \cite{MassyNguyenQuangDo77}),
but we derive them in a more abstract group-theoretic setting.

\subsection*{A)\ Cup products}
Let $\bar \psi,\bar\psi'\in H^1(\bar G)$.
We compute the extensions corresponding to $\bar\psi\cup\bar\psi'\in H^2(\bar G)$ in various situations.
For $\bar\psi_1\nek\bar\psi_k\in H^1(\bar G)$ such that $(\bar\psi_1\nek\bar\psi_k)\colon \bar G\to(\dbZ/p)^k$ is an isomorphism
and for a central extension $\omega$ of $\bar G$ by $\dbZ/p$,
we write $\omega^{(\bar\psi_1\nek\bar\psi_k)}$ for the modified extension of $(\dbZ/p)^k$ by $\dbZ/p$
as in the beginning of \S6.

\begin{prop}
\label{extensions of cup}
Suppose that  $\Ker(\bar\psi)\cap\Ker(\bar\psi')=1$.
\begin{enumerate}
\item[(a)]
If $\bar\psi=\bar\psi'=0$, then $\bar\psi\cup\bar\psi'$ corresponds to $\omega_0$.
\item[(b)]
If  $\bar\psi\neq0$, $\bar\psi'=0$ (resp., $\bar\psi=0$, $\bar\psi'\neq0$),
then $\bar\psi\cup\bar\psi'$ corresponds to $\omega_1^{\bar\psi}$ (resp., $\omega_1^{\bar\psi'}$).
\item[(c)]
If $p=2$ and $\bar\psi=\bar\psi'\neq0$, then $\bar\psi\cup\bar\psi'$ corresponds to $\omega_2^{\bar\psi}$.
\item[(d)]
If $p\neq2$ and $\bar\psi,\bar\psi'\neq0$ are $\dbF_p$-linearly dependent,
then $\bar\psi\cup\bar\psi'$ corresponds to $\omega_1^{\bar\psi}$.
\item[(e)]
If $p=2$ and $\bar\psi,\bar\psi'$ are $\dbF_p$-linearly independent,
then $\bar\psi\cup\bar\psi'$ corresponds to $\omega_3^{(\bar\psi,\bar\psi')}$.
\item[(f)]
If $p\neq2$ and $\bar\psi,\bar\psi'$ are $\dbF_p$-linearly independent,
then $\bar\psi\cup\bar\psi'$ corresponds to $\omega_4^{(\bar\psi,\bar\psi')}$.
\end{enumerate}
\end{prop}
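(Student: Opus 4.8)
The plan is to treat the six cases according to the linear-algebraic relationship between $\bar\psi$ and $\bar\psi'$, in each case identifying the finite group $\bar G$ (forced by the hypothesis $\Ker(\bar\psi)\cap\Ker(\bar\psi')=1$ together with the fact that $\bar G$ embeds in $(\dbZ/p)^2$, so $\bar G$ is $1$, $\dbZ/p$, or $(\dbZ/p)^2$), then computing a $2$-cocycle representing $\bar\psi\cup\bar\psi'$ and matching the resulting central extension against the list $\omega_0,\dots,\omega_4$. First I would dispose of (a): if $\bar\psi=\bar\psi'=0$ then the intersection of their kernels is $\bar G$, so the hypothesis forces $\bar G=1$, and $H^2(\bar G)=0$, so $\bar\psi\cup\bar\psi'=0$ corresponds to the unique (hence trivial) extension $\omega_0$. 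For (b), if say $\bar\psi\neq0,\ \bar\psi'=0$, then $\Ker(\bar\psi)=1$ forces $\bar G\cong\dbZ/p$ with $\bar\psi$ an isomorphism onto $\dbZ/p$; the cup product with $0$ is $0$, and $\omega_1^{\bar\psi}$ is the trivial class of $\Ext(\dbZ/p,\dbZ/p)$ (as noted right after the definition of the $\omega_i$), so they match. The symmetric case is identical.

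Next come the cases where both classes are nonzero. In (c) and (d), $\bar\psi$ and $\bar\psi'$ are $\dbF_p$-linearly dependent and nonzero; since their kernels intersect trivially and each kernel has index $p$, both kernels equal the same index-$p$ subgroup, which must therefore be $1$, so $\bar G\cong\dbZ/p$. Writing $\bar G=\langle\sigma\rangle$ with $\bar\psi(\sigma)=\bar1$, we have $\bar\psi'=c\bar\psi$ for some $c\in\dbF_p^\times$, and $\bar\psi\cup\bar\psi'=c(\bar\psi\cup\bar\psi)$. For $p=2$ (case (c)), $c=1$ and $\bar\psi\cup\bar\psi=\beta_{\bar G}(\bar\psi)$ by Lemma~\ref{formula for Bockstein when q is 2}; one checks this is the nonzero element of $H^2(\dbZ/2)\cong\dbZ/2$, and the corresponding extension is the nonsplit one $0\to\dbZ/2\to\dbZ/4\to\dbZ/2\to0$, i.e.\ $\omega_2^{\bar\psi}$ — I would verify this by computing the standard cocycle $(\sigma^i,\sigma^j)\mapsto\bar\psi(\sigma^i)\bar\psi(\sigma^j)$ and recognizing its nonvanishing via the group law~(\ref{group law}). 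For $p\neq2$ (case (d)), $\bar\psi\cup\bar\psi=0$ since $H^2(\dbZ/p)$ is generated by the Bockstein, which is the symmetric part, and $\bar\psi\cup\bar\psi$ is skew (its class lies in the decomposable part $H^2_\dec(\dbZ/p)$, which by Proposition~\ref{Psi of H2dec} has $\Psi$-image $\Skew(\dbZ/p,\dbZ/p)=0$); hence $\bar\psi\cup\bar\psi'=0$, matching the trivial class $\omega_1^{\bar\psi}$.

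For the linearly independent cases (e) and (f), $\bar G\cong(\dbZ/p)^2$ and $(\bar\psi,\bar\psi')$ is an isomorphism onto $(\dbZ/p)^2$, so without loss of generality (replacing $\bar G$ by $(\dbZ/p)^2$ via this isomorphism and using the $\theta,\lambda$ notation) I may assume $\bar G=(\dbZ/p)^2=\langle\sigma_1\rangle\times\langle\sigma_2\rangle$ with $\bar\psi,\bar\psi'$ the coordinate projections $\chi_1,\chi_2$. Then $\chi_1\cup\chi_2$ is represented by the cocycle $(\sigma,\tau)\mapsto\chi_1(\sigma)\chi_2(\tau)$, and via the group law~(\ref{group law}) the corresponding extension has underlying set $\dbZ/p\times(\dbZ/p)^2$ with $(a,\sigma)*(b,\tau)=(a+b+\chi_1(\sigma)\chi_2(\tau),\sigma\tau)$. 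I would then identify this extension group by explicit generators: taking lifts $\tilde\sigma_1=(0,\sigma_1)$, $\tilde\sigma_2=(0,\sigma_2)$ one computes their commutator and their $p$-th powers and matches the presentation against $D_4$ (when $p=2$) via $\theta$, giving $\omega_3^{(\bar\psi,\bar\psi')}$, or against $H_{p^3}$ (when $p\neq2$) via $\lambda$, giving $\omega_4^{(\bar\psi,\bar\psi')}$; the exponent-$p$ property of $H_{p^3}$ corresponds to the vanishing of the symmetric part, reflecting that the pure cup product $\chi_1\cup\chi_2$ carries no Bockstein contribution. The main obstacle I expect is bookkeeping in (e)–(f): one must be careful to get the \emph{twist} $\omega_3^{(\bar\psi,\bar\psi')}$ (resp.\ $\omega_4^{(\bar\psi,\bar\psi')}$) correct, i.e.\ to track which isomorphism $(\dbZ/p)^2\to(\dbZ/p)^2$ is used and to confirm that $\theta$ (resp.\ $\lambda$) composed with it recovers $(\bar\psi,\bar\psi')$, and to double-check the commutator computation in $D_4$ versus $Q_8$ — only the former arises here because the cocycle is (up to coboundary) the asymmetric one $\chi_1(\sigma)\chi_2(\tau)$ rather than a symmetric perturbation, so $r^2$ is a commutator, not a square of a lift of a generator outside the center.
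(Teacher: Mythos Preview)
Your approach is essentially the paper's: in each case you identify $\bar G$ from the hypothesis $\Ker(\bar\psi)\cap\Ker(\bar\psi')=1$, write down the explicit cocycle $(\sigma,\tau)\mapsto\bar\psi(\sigma)\bar\psi'(\tau)$, build the extension group via the group law~(\ref{group law}), and match generators against the target presentation. The paper does exactly this, including the same lifts in~(f); in~(e) it takes $\tilde r=(1,\sigma_1\sigma_2)$ and $\tilde s=(0,\sigma_2)$, which amounts to your $\tilde\sigma_1\tilde\sigma_2$ and $\tilde\sigma_2$, so the bookkeeping you anticipate is indeed the only subtlety there.

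There is, however, a real muddle in your argument for~(d). You assert that $\bar\psi\cup\bar\psi$ ``is skew'', but your own reasoning shows the opposite: Proposition~\ref{Psi of H2dec} gives $\Psi(H^2_\dec(\dbZ/p))=\Skew(\dbZ/p,\dbZ/p)=0$, so $\bar\psi\cup\bar\psi$ lies in $\Ker(\Psi)=H^2(\dbZ/p)_\sym$, and that group is \emph{nonzero} --- by Proposition~\ref{the Bockstein map and H2sym} it is isomorphic to $H^1(\dbZ/p)\cong\dbZ/p$. Knowing a class is symmetric does not make it vanish, so your chain of implications does not reach $\bar\psi\cup\bar\psi=0$. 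The paper's argument is the obvious one you skipped: since $p\neq2$, anti-commutativity of the cup product on $H^1$ gives $2(\bar\psi\cup\bar\psi)=0$, hence $\bar\psi\cup\bar\psi=0$, and therefore $\bar\psi\cup\bar\psi'=c(\bar\psi\cup\bar\psi)=0$. Replace your paragraph for~(d) with this one line and the rest of your proof goes through unchanged.
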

\begin{proof}
Consider the central extension
\[
\omega:\qquad  0\ \to\ \dbZ/p\ \xrightarrow{f}\ B\ \xrightarrow{g}\ \bar G\ \to\ 1
\]
corresponding to $\bar\psi\cup\bar\psi'$.
An inhomogeneous normalized $2$-cocyle $\bar G\times \bar G\to \dbZ/p$ representing
$\bar\psi\cup\bar\psi'$ is given by $(\sig,\tau)\mapsto\bar\psi(\sig)\cdot\bar\psi'(\tau)$.
Therefore $B=(\dbZ/p)\times \bar G$, with the group law
\begin{equation}
\label{group law for cup}
(a,\sig)*(b,\tau)=(a+b+\bar\psi(\sig)\bar\psi'(\tau),\sig\tau)
\end{equation}
for $a,b\in\dbZ/p$ and $\sig,\tau\in \bar G$ (see (\ref{group law})).
The trivial element of $B$ is $(0,1)$, and one has $f(a)=(a,1)$ and
$g(a,\sig)=\sig$ for $a\in \dbZ/p$ and $\sig\in\bar G$.
By induction,
\[
(a,\sig)^i=(ia+\tfrac{i(i-1)}2\bar\psi(\sig)\bar\psi'(\sig), \sig^i), \quad i=0,1,2,\ldots\ .
\]
We examine the various possibilities.

\medskip

(a)\quad
Immediate.

\medskip

(b)\quad
Here $\bar\psi$ (resp., $\bar\psi'$) is an isomorphism $\bar G\to\dbZ/p$
and $B$ is just the direct product $(\dbZ/p)\times\bar G$.
The assertion follows.

\medskip

(c)\quad
The assumptions imply that $\bar\psi=\bar\psi'\colon \bar G\to\dbZ/2$ is an isomorphism.
Let $\sig_0$ be the generator of $\bar G$.
Then $(0,\sig_0)^2=(1,1)$ and $(0,\sig_0)^4=(0,1)$ in $B$.
Hence $B\isom\dbZ/4$ and $\omega$ is equivalent to $\omega_2^{\bar\psi}$.

\medskip

(d)\quad
Here $\bar\psi\colon \bar G\to\dbZ/p$ is an isomorphism.
Since $p\neq2$ and $\cup$ is alternate, $\bar\psi\cup\bar\psi'=0$.
Hence $\omega$, and therefore also $\omega^{\bar\psi}$ split, so $B\isom(\dbZ/p)^2$.
Moreover, pick $b\in B$ such that  $(\bar\psi\circ g)(b)=\bar 1$.
Then the map $B\to(\dbZ/p)^2$, $f(\bar1)\mapsto (\bar1,\bar0)$, $b\mapsto (\bar0,\bar1)$,
is an isomorphism making the following diagram commutative:
\[
\xymatrix{
\omega: & 0\ar[r] & \dbZ/p\ar@{^{(}->}[r]^{f\qquad }\ar@{=}[d] & B=(\dbZ/p)\times\bar G
\ar[r]^{\qquad\quad g}\ar[d]_{\wr} & \bar G\ar[r]\ar[d]_{\wr}^{\bar\psi} & 1 \\
\omega_1: & 0\ar[r] & \dbZ/p\ar@{^{(}->}[r]^{\bar i\mapsto(\bar i,\bar 0)} & (\dbZ/p)^2
\ar[r]^{ (\bar i,\bar j)\mapsto\bar j} & \dbZ/p \ar[r] &  \ 0.
}
\]
Thus $\omega$ is equivalent to $\omega_1^{\bar\psi}$.

\medskip

(e), (f)\quad
Here $(\bar\psi,\bar\psi')\colon \bar G\to(\dbZ/p)^2$ is an isomorphism.
Take $\sig_1,\sig_2\in \bar G$ with
\[
\bar\psi(\sig_1)=1,\ \bar\psi(\sig_2)=0,\ \bar\psi'(\sig_1)=0,\ \bar\psi'(\sig_2)=1.
\]

When $p=2$, we set  $\tilde r=(1,\sig_1\sig_2)$, $\tilde s=(0,\sig_2)$ and compute in $B$:
\[
\tilde r^2=(1,1) , \ \tilde r^4=(0,1), \ \tilde s^2=(0,1) , \ \tilde r\tilde s=(0,\sig_1),
\ (\tilde r\tilde s)^2=(0,1).
\]
We get an isomorphism $B\isom D_4$, $\tilde r\mapsto r$, $\tilde s\mapsto s$, and a diagram
\[
\xymatrix{
\omega: &0\ar[r] & \dbZ/2\ar@{^{(}->}[r]^{f\qquad}\ar@{=}[d] &B=(\dbZ/2)\times\bar G
\ar[r]^{\qquad\quad g}\ar[d]_{\wr}  & \bar G\ar[d]^{(\bar\psi,\bar\psi')}_{\wr} \ar[r]& 1 \\
\omega_3:& 1\ar[r] & \dbZ/2\ar@{^{(}->}[r]^{\bar i\mapsto r^{2i}}  & D_4\ar[r]^{\rho}  & (\dbZ/2)^2  \ar[r]& 0\\
}
\]
which is commutative with exact  rows.
Hence $\omega$ is equivalent to $\omega_3^{(\bar\psi,\bar\psi')}$.

For $p$ odd, $B$ has exponent $p$.
Set $\tilde r=(0,\sig_1)$, $\tilde s=(0,\sig_2)$, $\tilde t=(1,1)$.
Then
\[
\tilde r\tilde t=\tilde t\tilde r=(1,\sig_1), \quad \tilde s\tilde t=\tilde t\tilde s=(1,\sig_2),
\quad \tilde r\tilde s=\tilde t\tilde s\tilde r=(1,\sig_1\sig_2).
\]
This gives an isomorphism $B\isom H_{p^3}$, $\tilde r\mapsto r$, $\tilde s\mapsto s$, $\tilde t\mapsto t$,
and a commutative diagram
\[
\xymatrix{
\omega: & 0\ar[r] & \dbZ/p\ar@{^{(}->}[r]^{f\qquad}\ar@{=}[d] &B=(\dbZ/p)\times\bar G
\ar[r]^{\qquad\quad g}\ar[d]_{\wr}  & \bar G\ar[d]^{(\bar\psi,\bar\psi')}_{\wr} \ar[r]& 1 \\
\omega_4: & 0\ar[r] & \dbZ/p\ar@{^{(}->}[r]^{\bar i\mapsto t^i}  & H_{p^3}\ar[r]^{\lambda}  & (\dbZ/p)^2  \ar[r]& 0 .\\
}
\]
Therefore $\omega$ is equivalent in this case to $\omega_4^{(\bar\psi,\bar\psi')}$.
\end{proof}

\subsection*{B)\ Bockstein elements}

\begin{prop}
\label{extension of Bockstein}
If $0\neq \bar\psi\in H^1(\bar G)$ and $\bar G\isom\dbZ/p$,  then $\beta_{\bar G}(\bar\psi)$
corresponds to $\omega_2^{\bar\psi}$.
\end{prop}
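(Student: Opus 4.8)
The plan is to compute both $\beta_{\bar G}(\bar\psi)$ and the class in $H^2(\bar G,\dbZ/p)$ corresponding to $\omega_2^{\bar\psi}$ as explicit inhomogeneous $2$-cocycles on $\bar G$, using one and the same set-theoretic section, and then to observe that the two cocycles literally coincide. There is no real obstacle beyond keeping the conventions of \S2C and \S6 straight.

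First I would note that, since $\bar\psi\neq0$ and $|\bar G|=p$, the homomorphism $\bar\psi\colon\bar G\to\dbZ/p$ is an isomorphism, so that $\omega_2^{\bar\psi}$ is indeed defined; concretely it is the central extension $0\to\dbZ/p\xrightarrow{\iota}\dbZ/p^2\xrightarrow{\bar\psi\inv\circ\pi}\bar G\to1$, where $\iota(\bar i)=\overline{pi}$ and $\pi\colon\dbZ/p^2\to\dbZ/p$ is the reduction map. Recall also that $\beta_{\bar G}$ is, by definition, the connecting homomorphism attached to the short exact sequence $0\to\dbZ/p\xrightarrow{\iota}\dbZ/p^2\xrightarrow{\pi}\dbZ/p\to0$ of trivial $\bar G$-modules.

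Next I would fix the normalized set-theoretic section $t\colon\dbZ/p\to\dbZ/p^2$ of $\pi$ given by $t(\bar j)=\bar j$ for $0\le j<p$. On the one hand, lifting the $1$-cocycle $\bar\psi$ to the normalized $1$-cochain $\tilde\psi=t\circ\bar\psi\colon\bar G\to\dbZ/p^2$ (a legitimate lift since $\pi\circ\tilde\psi=\bar\psi$), and using that $\bar\psi$ is a homomorphism so that $d\tilde\psi$ takes values in $\Ker\pi=\iota(\dbZ/p)$, one finds that $\beta_{\bar G}(\bar\psi)$ is represented by the normalized $2$-cocycle $\alpha\colon\bar G^2\to\dbZ/p$ determined by $\iota(\alpha(\sig,\tau))=\tilde\psi(\sig)+\tilde\psi(\tau)-\tilde\psi(\sig\tau)=t(\bar\psi(\sig))+t(\bar\psi(\tau))-t(\bar\psi(\sig)+\bar\psi(\tau))$. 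On the other hand, the same map $s=t\circ\bar\psi$ is a normalized set-theoretic section of $\bar\psi\inv\circ\pi$, so by the cocycle recipe at the end of \S6 the class in $H^2(\bar G,\dbZ/p)$ corresponding to $[\omega_2^{\bar\psi}]$ is represented by the $2$-cocycle $(\sig,\tau)\mapsto\iota\inv\bigl(s(\sig)+s(\tau)-s(\sig\tau)\bigr)$, written additively as $\dbZ/p^2$ is abelian. This is exactly $\alpha$, and the proposition follows.

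Finally I would observe that the computation is insensitive to the prime, so the same argument also handles the case $\bar G\isom\dbZ/2$, in agreement with Lemma \ref{formula for Bockstein when q is 2} and Proposition \ref{extensions of cup}(c). The single point that requires attention — more a matter of bookkeeping than a genuine difficulty — is to use the \emph{same} section $t$ on both sides and to check that $d\tilde\psi$ really lands in the distinguished copy of $\dbZ/p$ inside $\dbZ/p^2$, which is precisely where the hypothesis that $\bar\psi$ is a homomorphism enters.
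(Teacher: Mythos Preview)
Your proof is correct and follows essentially the same approach as the paper's: both compute $\beta_{\bar G}(\bar\psi)$ via a lift $\hat\psi=t\circ\bar\psi$ of $\bar\psi$ to $\dbZ/p^2$, observe that this same map is a section of $\bar\psi^{-1}\circ\pr\colon\dbZ/p^2\to\bar G$, and conclude that the cocycle recipe of \S6 applied to $\omega_2^{\bar\psi}$ yields the identical $2$-cocycle. Your version is slightly more explicit about the identification $\iota^{-1}$ needed to land in $\dbZ/p$, but the argument is the same.
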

\begin{proof}
As a connecting homomorphism in a cohomology exact sequence, $\beta_{\bar G}\colon H^1(\bar G)\to H^2(\bar G)$
is defined as follows  \cite{NeukirchSchmidtWingberg}*{Ch.\ I, \S3}:
let $\pr\colon\dbZ/p^2\to\dbZ/p$ be the natural projection.
Given a nonzero $\bar\psi\in H^1(\bar G)$, we consider it as an inhomogeneous $1$-cocycle,
and lift it to a map $\hat \psi\colon \bar G\to\dbZ/p^2$ with $\bar\psi=\pr\circ\hat\psi$.
Then the map
\[
\chi\colon \bar G\times\bar G\to\dbZ/p, \qquad \chi(\sig_1,\sig_2)
=\hat\psi(\sig_1)+\hat\psi(\sig_2)-\hat\psi(\sig_1\sig_2)
\]
is a normalized $2$-cocycle with cohomology class $\beta_{\bar G}(\bar\psi)$.

On the other hand, $\bar\psi\colon \bar G\to\dbZ/p$ is an isomorphism, so $\hat\psi$
is a section of the epimorphism $\bar\psi\inv\circ\pr\colon\dbZ/p^2\to\bar G$.
By the remarks in \S6, the cohomology class $\beta_{\bar G}(\bar\psi)$ of $\chi$ therefore corresponds to the extension
\[
\omega_2^{\bar\psi}:\qquad  0\ \to\ \dbZ/p\ \to\ \dbZ/p^2\ \xrightarrow{\bar\psi\inv\circ\pr}\  \bar G\ \to\  1.
\qedhere
\]
\end{proof}

\begin{cor}
\label{extensions for linearly dependent elements}
Suppose that $p\neq2$ and let $\bar\psi,\bar\psi'\in H^1(\bar G)$ be $\dbF_p$-linearly dependent, $\bar\psi\neq0$.
Then $\Lam_{\bar G}(\bar\psi\tensor\bar\psi',\bar\psi)$ corresponds to $\omega_2^{\bar\psi}$.
\end{cor}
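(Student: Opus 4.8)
The plan is to reduce immediately to the case $\bar G \isom \dbZ/p$ via inflation, and then combine Proposition \ref{extensions of cup}(d) with Proposition \ref{extension of Bockstein}, using the fact that Baer sum corresponds to addition in $H^2$.

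First I would observe that since $\bar\psi,\bar\psi'$ are $\dbF_p$-linearly dependent and $\bar\psi \neq 0$, the hypothesis $\Ker(\bar\psi)\cap\Ker(\bar\psi') = \Ker(\bar\psi)$ forces $\Ker(\bar\psi) = 1$ (so that $\bar\alp = (\bar\psi\tensor\bar\psi',\bar\psi)$ has trivial kernel as required), whence $\bar\psi\colon\bar G \to \dbZ/p$ is an isomorphism and $\bar G \isom \dbZ/p$. Next I would unwind the definition $\Lam_{\bar G}(\bar\psi\tensor\bar\psi',\bar\psi) = \bar\psi\cup\bar\psi' + \beta_{\bar G}(\bar\psi)$. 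Since $p\neq 2$ and $\bar\psi'$ is a scalar multiple of $\bar\psi$, anticommutativity of the cup product gives $\bar\psi\cup\bar\psi' = 0$ (this is exactly the computation already carried out in the proof of Proposition \ref{extensions of cup}(d), where the extension $\omega^{\bar\psi}$ splits). Hence $\Lam_{\bar G}(\bar\psi\tensor\bar\psi',\bar\psi) = \beta_{\bar G}(\bar\psi)$.

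Then I would simply invoke Proposition \ref{extension of Bockstein}: since $0 \neq \bar\psi \in H^1(\bar G)$ and $\bar G \isom \dbZ/p$, the class $\beta_{\bar G}(\bar\psi)$ corresponds to the extension $\omega_2^{\bar\psi}$. Combining the two displayed equalities, $\Lam_{\bar G}(\bar\psi\tensor\bar\psi',\bar\psi)$ corresponds to $\omega_2^{\bar\psi}$, which is the assertion.

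There is essentially no obstacle here; the corollary is a direct bookkeeping consequence of the preceding two results. The only point requiring a word of care is the vanishing $\bar\psi \cup \bar\psi' = 0$ in the linearly dependent case, but this follows because over $\dbF_p$ with $p$ odd the cup product on $H^1$ is alternating (indeed $\bar\psi\cup\bar\psi=0$, and $\bar\psi'$ is a scalar multiple of $\bar\psi$), exactly as used in Proposition \ref{extensions of cup}(d). One could alternatively deduce the corollary by writing $(\bar\psi\tensor\bar\psi',\bar\psi)$ as a Baer-sum combination of the simple type elements underlying Propositions \ref{extensions of cup}(d) and \ref{extension of Bockstein}, but the direct computation above is shorter.
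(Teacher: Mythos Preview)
Your proof is correct and follows essentially the same route as the paper's: both observe that for $p$ odd the cup product is alternating, so $\bar\psi\cup\bar\psi'=0$ and hence $\Lam_{\bar G}(\bar\psi\tensor\bar\psi',\bar\psi)=\beta_{\bar G}(\bar\psi)$, then invoke Proposition~\ref{extension of Bockstein}. Your added remark that the trivial-kernel hypothesis forces $\bar G\isom\dbZ/p$ is a useful clarification (the paper leaves this implicit from the section's standing assumptions), and the references in your opening plan to inflation and Baer sums turn out to be unnecessary, as your actual argument shows.
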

\begin{proof}
Since $\cup$ is alternate, $\bar\psi\cup\bar\psi'=0$.
Hence $\Lam_{\bar G}(\bar\psi\tensor\bar\psi',\bar\psi)=\beta_{\bar G}(\bar\psi)$.
Now use Proposition \ref{extension of Bockstein}.
\end{proof}

When $\bar\psi,\bar\psi'$ are $\dbF_p$-linearly independent the computation is more involved.
It is sufficient for us to consider only the case $p\neq2$.

\begin{prop}
\label{extensions for linearly independent elements}
Suppose that $p\neq2$.
Let $\bar\psi,\bar\psi'\in H^1(\bar G)$ be $\dbF_p$-linearly independent with $\Ker(\bar\psi)\cap\Ker(\bar\psi')=1$.
Then $\Lam_{\bar G}(\bar\psi\tensor\bar\psi',\bar\psi)$ corresponds to $\omega_5^{(\bar\psi,\bar\psi')}$.
\end{prop}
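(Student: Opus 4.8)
The plan is to reduce to the universal case where $\bar G$ is exactly $(\dbZ/p)^2$, use the Baer-sum decomposition $\Lam_{\bar G}(\bar\psi\tensor\bar\psi',\bar\psi) = \bar\psi\cup\bar\psi' + \beta_{\bar G}(\bar\psi)$, identify the two summands via earlier results, and then invoke Lemma \ref{Baer example}. First I would observe that since $\bar\psi,\bar\psi'$ are $\dbF_p$-linearly independent and $\Ker(\bar\psi)\cap\Ker(\bar\psi')=1$, the pair $(\bar\psi,\bar\psi')$ gives an isomorphism $\bar G\xrightarrow{\sim}(\dbZ/p)^2$; all the extensions in question are inflated from $(\dbZ/p)^2$ along this isomorphism, so by the functoriality of $\Lam$ (diagram (\ref{inf and Lambda commute})) and of the $\Ext$--$H^2$ correspondence, it suffices to treat $\bar G=(\dbZ/p)^2$ with $\bar\psi=\pr_1$, $\bar\psi'=\pr_2$ the two coordinate projections and then apply the twist $(\ \cdot\ )^{(\bar\psi,\bar\psi')}$.

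In that universal case I would split $\Lam_{\bar G}(\bar\psi\tensor\bar\psi',\bar\psi)$ as a Baer sum of $\bar\psi\cup\bar\psi'$ and $\beta_{\bar G}(\bar\psi)$, using that the isomorphism $\Ext(\bar G,\dbZ/p)\isom H^2(\bar G,\dbZ/p)$ carries addition to Baer sum. For the first summand, Proposition \ref{extensions of cup}(f) identifies $\bar\psi\cup\bar\psi'$ with the class of $\omega_4$ (the Heisenberg extension), since $\bar\psi,\bar\psi'$ are linearly independent. For the second summand, $\beta_{\bar G}(\bar\psi) = \inf_{\bar G}(\beta_{\dbZ/p}(\bar\psi_0))$ where $\bar\psi_0$ is the induced generator of $H^1(\dbZ/p)$ on the quotient $\bar G/\Ker(\bar\psi)$ and the inflation is along $\pr_1\colon(\dbZ/p)^2\to\dbZ/p$; by Proposition \ref{extension of Bockstein} this Bockstein class corresponds to $\omega_2$, and by (\ref{inflation of extensions}) its inflation along $\pr_1$ corresponds to $\omega_6$. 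Hence $\Lam_{\bar G}(\bar\psi\tensor\bar\psi',\bar\psi)$ corresponds to the Baer sum of $[\omega_4]$ and $[\omega_6]$, which is $[\omega_5]$ by Lemma \ref{Baer example}. Twisting back by the isomorphism $(\bar\psi,\bar\psi')$ yields $\omega_5^{(\bar\psi,\bar\psi')}$, as claimed.

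The step that needs the most care is bookkeeping the compatibility of the functorial identifications: one must check that the inflation maps used for $\beta_{\bar G}(\bar\psi)$ (along $\pr_1$) and the isomorphism-twist used to pass from $(\dbZ/p)^2$ to $\bar G$ are compatible, i.e. that the composite $\bar G \xrightarrow{(\bar\psi,\bar\psi')} (\dbZ/p)^2 \xrightarrow{\pr_1} \dbZ/p$ really equals $\bar\psi$ followed by the chosen identification $\bar G/\Ker(\bar\psi)\isom\dbZ/p$, and similarly that Proposition \ref{extensions of cup}(f) is being applied with the coordinate functions matching the generators $r,s,t$ of $H_{p^3}$ used in Lemma \ref{Baer example}. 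The potential obstacle is purely a matter of aligning conventions; once the diagrams commute, the identification of the Baer sum via Lemma \ref{Baer example} is immediate. I expect no genuinely hard computation here, only a careful chase through (\ref{inf and Lambda commute}), (\ref{inflation of extensions}), and the $\Ext$--$H^2$ dictionary.
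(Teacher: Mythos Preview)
Your proposal is correct and follows essentially the same route as the paper: decompose $\Lam_{\bar G}(\bar\psi\tensor\bar\psi',\bar\psi)$ as $\bar\psi\cup\bar\psi'+\beta_{\bar G}(\bar\psi)$, identify the summands with $[\omega_4]$ and $[\omega_6]$ via Proposition~\ref{extensions of cup}(f) and Proposition~\ref{extension of Bockstein} plus inflation, and apply Lemma~\ref{Baer example}. The only cosmetic difference is that the paper works directly with $\bar G$ and invokes Lemma~\ref{inflation to a direct product} to compute the inflated Bockstein extension explicitly as $\omega_6^{(\bar\psi,\bar\psi')}$, whereas you first pass to the universal case $(\dbZ/p)^2$ and cite (\ref{inflation of extensions}); these are the same computation.
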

\begin{proof}
We may decompose $\bar G=\tilde G\times\tilde G'$, where $\tilde G,\tilde G'\isom\dbZ/p$ and there exists
$\tilde\psi\in H^1(\tilde G)$ with $\inf_{\bar G}(\tilde\psi)=\bar\psi$.
By Proposition \ref{extension of Bockstein}, $\beta_{\tilde G}(\tilde\psi)$ corresponds to $\omega_2^{\tilde\psi}$.
Hence $\beta_{\bar G}(\bar\psi)=\inf_{\bar G}(\beta_{\tilde G}(\tilde\psi))$ corresponds to
$\inf_{\bar G}(\omega_2^{\tilde\psi})$.
By Lemma \ref{inflation to a direct product}, this extension is
\[
0\ \to\ \dbZ/p\ \xrightarrow{\bar i\mapsto(\overline{pi},1)}\ (\dbZ/p^2)\times\tilde G'
\ \xrightarrow{(\tilde\psi\inv\circ\pr,\id)}\ \tilde G\times\tilde G'\ \to\ 1,
\]
where $\pr\colon\dbZ/p^2\to\dbZ/p$ is again the natural projection.
But the latter extension is equivalent to
\[
0\ \to\ \dbZ/p\ \xrightarrow{\bar i\mapsto(\overline{pi},0)}\ (\dbZ/p^2)\times(\dbZ/p)\
\xrightarrow{(\bar\psi\inv\circ\pr,(\bar\psi')\inv)}\ \bar G=\tilde G\times\tilde G'\ \to\ 1,
\]
which is $\omega_6^{(\bar\psi,\bar\psi')}$.

Now by Proposition \ref{extensions of cup}(f), $\bar\psi\cup\bar\psi'$ corresponds to  $\omega_4^{(\bar\psi,\bar\psi')}$.
It therefore follows from Lemma \ref{Baer example} that $\bar\psi\cup\bar\psi'+\beta_{\bar G}(\bar\psi)$
corresponds to $\omega_5^{(\bar\psi,\bar\psi')}$.
\end{proof}

\subsection*{C)\ Summary}
Putting together the results of the previous two subsections we obtain:

\begin{prop}
\label{summary1}
Let $\bar\alp\in\Omega(\bar G)$ have simple type and trivial kernel.
Then $\Lam_{\bar G}(\bar\alp)\in H^2(\bar G)$ corresponds to one of the following extensions:
\begin{enumerate}
\item[(i)]
when $p=2$: \quad $\omega_0$, $\omega_1^{\bar\psi}$,  $\omega_2^{\bar\psi}$, $\omega_3^{(\bar\psi,\bar\psi')}$;
\item[(ii)]
when $p\neq2$: \quad $\omega_0$, $\omega_1^{\bar\psi}$,  $\omega_2^{\bar\psi}$, $\omega_5^{(\bar\psi,\bar\psi')}$,
\end{enumerate}
where $\bar\psi,\bar\psi'$ are taken as above.
\end{prop}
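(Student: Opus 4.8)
The plan is to reduce the statement to a straightforward case analysis governed by the structure of $\bar\alp$ of simple type and trivial kernel. Recall that such an $\bar\alp$ is either $\bar\psi\tensor\bar\psi'$ (when $p=2$) or $(\bar\psi\tensor\bar\psi',\bar\psi)$ (when $p\neq2$), with $\Ker(\bar\psi)\cap\Ker(\bar\psi')=1$; and $\Lam_{\bar G}(\bar\alp)$ equals $\bar\psi\cup\bar\psi'$ in the first case and $\bar\psi\cup\bar\psi'+\beta_{\bar G}(\bar\psi)$ in the second. So the proof amounts to enumerating the possibilities for the pair $(\bar\psi,\bar\psi')\in H^1(\bar G)^2$, constrained only by the trivial-kernel condition, and reading off which of the extensions $\omega_0,\ldots,\omega_5$ corresponds to $\Lam_{\bar G}(\bar\alp)$ from the propositions already proved in \S9.

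\medskip

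For $p=2$, I would split into the cases: (a) $\bar\psi=\bar\psi'=0$ (so $\bar G=1$), giving $\omega_0$ by Proposition \ref{extensions of cup}(a); (b) exactly one of $\bar\psi,\bar\psi'$ is zero (so $\bar G\isom\dbZ/2$), giving $\omega_1^{\bar\psi}$ by Proposition \ref{extensions of cup}(b) — after renaming $\bar\psi'$ to $\bar\psi$ if necessary, which is why only $\omega_1^{\bar\psi}$ appears in the list; (c) $\bar\psi=\bar\psi'\neq0$, giving $\omega_2^{\bar\psi}$ by Proposition \ref{extensions of cup}(c); and (d) $\bar\psi,\bar\psi'$ both nonzero and distinct, hence $\dbF_2$-linearly independent, giving $\omega_3^{(\bar\psi,\bar\psi')}$ by Proposition \ref{extensions of cup}(e). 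Since over $\dbF_2$ ``both nonzero and not equal'' is the same as ``linearly independent'', these four cases exhaust all pairs, and the list in part (i) is complete. For $p\neq2$ the analysis is parallel but uses $\Lam_{\bar G}(\bar\alp)=\bar\psi\cup\bar\psi'+\beta_{\bar G}(\bar\psi)$: the cases are $\bar\psi=\bar\psi'=0$ (so $\bar G=1$, giving $\omega_0$, since $\beta_{\bar G}$ vanishes on the zero group); $\bar\psi=0$, $\bar\psi'\neq0$ (so $\bar G\isom\dbZ/p$; here $\bar\psi\cup\bar\psi'=0$ and $\beta_{\bar G}(\bar\psi)=\beta_{\bar G}(0)=0$, so $\Lam_{\bar G}(\bar\alp)=0$ and the extension is $\omega_1^{\bar\psi'}$, which we rename $\omega_1^{\bar\psi}$); $\bar\psi\neq0$ with $\bar\psi,\bar\psi'$ $\dbF_p$-linearly dependent (so $\bar G\isom\dbZ/p$), giving $\omega_2^{\bar\psi}$ by Corollary \ref{extensions for linearly dependent elements}; and $\bar\psi,\bar\psi'$ $\dbF_p$-linearly independent, giving $\omega_5^{(\bar\psi,\bar\psi')}$ by Proposition \ref{extensions for linearly independent elements}.

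\medskip

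The only point requiring a little care is the bookkeeping of which hypotheses are actually available in each case, in particular verifying that the trivial-kernel condition on $\bar\alp$ translates exactly into the hypothesis $\Ker(\bar\psi)\cap\Ker(\bar\psi')=1$ needed by the \S9 propositions, and that every pair $(\bar\psi,\bar\psi')$ subject to that condition falls into exactly one of the listed cases (in particular, $\Ker(\bar\psi)\cap\Ker(\bar\psi')=1$ forces $\bar G$ to embed in $(\dbZ/p)^2$, so $\bar G$ is one of $1$, $\dbZ/p$, or $(\dbZ/p)^2$, which is precisely the trichotomy driving the case split). Since all the hard computational content — identifying the middle groups as $\dbZ/p^2$, $D_4$, $H_{p^3}$, $M_{p^3}$ and matching the extension classes — has already been done in Propositions \ref{extensions of cup}, \ref{extension of Bockstein}, Corollary \ref{extensions for linearly dependent elements}, and Proposition \ref{extensions for linearly independent elements}, there is no genuine obstacle here; the proof is a short assembly. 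I expect the write-up to be essentially: ``This is immediate from Definition \ref{simple type}, the definition of $\Lam_{\bar G}$, and Propositions \ref{extensions of cup}, \ref{extension of Bockstein}, Corollary \ref{extensions for linearly dependent elements} and Proposition \ref{extensions for linearly independent elements}, according to whether $\bar\psi,\bar\psi'$ are zero, nonzero and linearly dependent, or linearly independent over $\dbF_p$.''
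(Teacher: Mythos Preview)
Your proposal is correct and follows essentially the same approach as the paper: both argue by a case split on whether $\bar\psi,\bar\psi'$ are zero, nonzero but $\dbF_p$-linearly dependent, or linearly independent, and in each case invoke the relevant result among Proposition~\ref{extensions of cup}, Proposition~\ref{extension of Bockstein} (or its Corollary~\ref{extensions for linearly dependent elements}), and Proposition~\ref{extensions for linearly independent elements}. The paper's proof is slightly terser but the content is identical.
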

\begin{proof}
When $p=2$ we have $\bar\alp=\bar\psi\tensor\bar\psi'$, with $\bar\psi,\bar\psi'\in H^1(\bar G)$
and $\Ker(\bar\psi)\cap\Ker(\bar\psi')=1$.
Furthermore, $\Lam_{\bar G}(\bar\alp)=\bar\psi\cup\bar\psi'$.
Now apply Proposition \ref{extensions of cup}.

When $p\neq2$ we write $\bar\alp=(\bar\psi\tensor\bar\psi',\bar\psi)$ where again
$\bar\psi,\bar\psi'\in H^1(\bar G)$ and $\Ker(\bar\psi)\cap\Ker(\bar\psi')=1$.
Here $\Lam_{\bar G}(\bar\alp)=\bar\psi\cup\bar\psi'+\beta_{\bar G}(\bar\psi)$.
If $\bar\psi=0$, then this corresponds to either $\omega_0$ or $\omega_1^{\bar\psi}$, by
Proposition \ref{extensions of cup}(a)(b).
If $\bar\psi\neq0$ and $\bar\psi,\bar\psi'$ are $\dbF_p$-linearly dependent,
then $\Lam_{\bar G}(\bar\alp)=\beta_{\bar G}(\bar\psi)$ corresponds to $\omega_2^{\bar\psi}$,
by Proposition \ref{extension of Bockstein}.
Finally, if $\bar\psi,\bar\psi'$ are $\dbF_p$-linearly independent,
then by Proposition \ref{extensions for linearly independent elements},
$\Lam_{\bar G}(\bar\alp)$ corresponds to $\omega_5^{(\bar\psi,\bar\psi')}$.
\end{proof}

One has the following converse result:

\begin{prop}
\label{summary2}
When $p=2$ let $i\in \{0,1,2,3\}$ and when $p\neq2$ let $i\in\{0,1,2,5\}$.
Let $(\dbZ/p)^s$ be the image of the epimorphism in $\omega_i$ (so $s=0,1,1,2,2$ for $i=0,1,2,3,5$, respectively).
Let $\theta\colon \bar G\xrightarrow{\sim}(\dbZ/p)^s$ be an isomorphism.
There exists $\bar\alp\in\Omega(\bar G)$ of simple type and with trivial kernel such that
$\Lam_{\bar G}(\bar\alp)\in H^2(\bar G)$ corresponds to $\omega_i^\theta$.
\end{prop}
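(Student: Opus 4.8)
The plan is to go through the cases $i \in \{0,1,2,3,5\}$ one at a time, and in each case exhibit an explicit element $\bar\alp \in \Omega(\bar G)$ of simple type with trivial kernel whose image under $\Lam_{\bar G}$ corresponds to $\omega_i^\theta$. The point of Proposition \ref{summary2} is that it is a converse to Proposition \ref{summary1}, so the essential content is already contained in the computations of \S9A and \S9B; the work here is just bookkeeping to make sure we can \emph{prescribe} the isomorphism $\theta$. I would identify $\bar G$ with $(\dbZ/p)^s$ via $\theta$ throughout, so that $H^1(\bar G)$ has the standard coordinate functionals available.

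For $i=0$ we have $s=0$, so $\bar G = 1$; take $\bar\psi = \bar\psi' = 0$ and $\bar\alp = 0$ (which is $0\tensor 0$ when $p=2$, and $(0\tensor 0, 0)$ when $p\neq 2$). This has simple type and trivially has trivial kernel, and $\Lam_{\bar G}(\bar\alp)=0$ corresponds to $\omega_0$ by Proposition \ref{extensions of cup}(a). For $i=1$ we have $s=1$; let $\bar\psi\colon \bar G \xrightarrow{\sim}\dbZ/p$ be the identity (under the identification) and $\bar\psi'=0$, with $\bar\alp = \bar\psi\tensor\bar\psi'$ (resp.\ $(\bar\psi\tensor\bar\psi',0)$ when $p\neq 2$; note the second coordinate must be $0$, not $\bar\psi$, so that $\Lam$ picks out only the cup product). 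Then $\Ker(\bar\psi)\cap\Ker(\bar\psi')=\Ker(\bar\psi)=1$, and $\Lam_{\bar G}(\bar\alp)=\bar\psi\cup\bar\psi'$ corresponds to $\omega_1^{\bar\psi}=\omega_1^\theta$ by Proposition \ref{extensions of cup}(b). For $i=2$, again $s=1$; when $p=2$ take $\bar\psi=\bar\psi'\colon\bar G\xrightarrow{\sim}\dbZ/2$ the identity and $\bar\alp=\bar\psi\tensor\bar\psi'$, so $\Lam_{\bar G}(\bar\alp)=\bar\psi\cup\bar\psi'$ corresponds to $\omega_2^{\bar\psi}$ by Proposition \ref{extensions of cup}(c); when $p\neq 2$ take $\bar\psi\colon\bar G\xrightarrow{\sim}\dbZ/p$ the identity, $\bar\psi'=\bar\psi$ (or any nonzero multiple), and $\bar\alp=(\bar\psi\tensor\bar\psi',\bar\psi)$, so that $\bar\psi\cup\bar\psi'=0$ since $\cup$ is alternate and $\Lam_{\bar G}(\bar\alp)=\beta_{\bar G}(\bar\psi)$ corresponds to $\omega_2^{\bar\psi}$ by Corollary \ref{extensions for linearly dependent elements} (equivalently Proposition \ref{extension of Bockstein}). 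In all these cases the prescribed $\theta$ is exactly the isomorphism $\bar\psi$ (or $(\bar\psi,\bar\psi')$), so $\omega_i^\theta$ is obtained on the nose.

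The remaining cases $i=3$ (when $p=2$) and $i=5$ (when $p\neq 2$) have $s=2$. Here let $\bar\psi,\bar\psi'$ be the two coordinate functionals on $(\dbZ/p)^2$ under the identification $\theta$, so that $(\bar\psi,\bar\psi')\colon\bar G\xrightarrow{\sim}(\dbZ/p)^2$ is precisely $\theta$; in particular $\bar\psi,\bar\psi'$ are $\dbF_p$-linearly independent and $\Ker(\bar\psi)\cap\Ker(\bar\psi')=1$. When $p=2$ set $\bar\alp=\bar\psi\tensor\bar\psi'$; by Proposition \ref{extensions of cup}(e), $\Lam_{\bar G}(\bar\alp)=\bar\psi\cup\bar\psi'$ corresponds to $\omega_3^{(\bar\psi,\bar\psi')}=\omega_3^\theta$. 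When $p\neq 2$ set $\bar\alp=(\bar\psi\tensor\bar\psi',\bar\psi)$; by Proposition \ref{extensions for linearly independent elements}, $\Lam_{\bar G}(\bar\alp)=\bar\psi\cup\bar\psi'+\beta_{\bar G}(\bar\psi)$ corresponds to $\omega_5^{(\bar\psi,\bar\psi')}=\omega_5^\theta$. In each case $\bar\alp$ visibly has simple type and trivial kernel by construction.

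I do not expect a genuine obstacle here, since every computation needed is already available from Propositions \ref{extensions of cup}, \ref{extension of Bockstein}, \ref{extensions for linearly independent elements} and Corollary \ref{extensions for linearly dependent elements}. The only point requiring a little care is matching the \emph{twist} by $\theta$: one must check that the isomorphism $\bar G \to (\dbZ/p)^s$ produced by those earlier propositions (namely $\bar\psi$, or $(\bar\psi,\bar\psi')$) can be taken to be the given $\theta$ rather than merely some isomorphism, which is why I choose $\bar\psi,\bar\psi'$ to be the coordinate functionals of $\theta$ from the outset. A secondary subtlety, already flagged above, is that in the $p\neq 2$ cases $i=1$ one must use second coordinate $0$ (so $\Lam$ returns the cup product alone) whereas in cases $i=2,5$ one uses second coordinate $\bar\psi$ (so the Bockstein term is included); getting these right is the whole content of the statement.
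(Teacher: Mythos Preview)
Your approach is essentially the paper's: identify $\bar G$ with $(\dbZ/p)^s$ via $\theta$ and run through the cases using Propositions \ref{extensions of cup}, \ref{extension of Bockstein}, \ref{extensions for linearly independent elements}, and Corollary \ref{extensions for linearly dependent elements}. All cases are correct except one.

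The case $i=1$ with $p\neq 2$ contains an error stemming from a misreading of Definition \ref{simple type}. You set $\bar\psi=\id_{\dbZ/p}$, $\bar\psi'=0$, and take $\bar\alp=(\bar\psi\tensor\bar\psi',0)$, remarking that ``the second coordinate must be $0$, not $\bar\psi$''. But for $q\neq 2$, Definition \ref{simple type}(ii) \emph{requires} the second coordinate to equal the first tensor factor; there is no freedom to choose it. With $\bar\psi=\id\neq 0$, the pair $(\bar\psi\tensor\bar\psi',0)$ is not of simple type via that presentation. (It happens that $\id\tensor 0=0$ in the tensor product, so the element you wrote down is actually $(0,0)$, which \emph{is} of simple type via $\psi=0$, $\psi'=\id$ --- but your reasoning does not see this, and your closing remark about ``getting these second coordinates right'' repeats the misreading.) The paper handles this case by swapping the roles: take $\bar\psi=0$, $\bar\psi'=\id_{\dbZ/p}$, so that $\bar\alp=(0\tensor\id,0)$ is manifestly of simple type with trivial kernel $\Ker(0)\cap\Ker(\id)=1$, and $\Lam_{\bar G}(\bar\alp)=0\cup\id+\beta_{\bar G}(0)=0$ corresponds to $\omega_1^{\bar\psi'}=\omega_1$ by Proposition \ref{extensions of cup}(b).
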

\begin{proof}
We may assume that $\bar G=(\dbZ/p)^s$ and $\theta=\id$.
Let $\pr_j\colon(\dbZ/p)^2\to\dbZ/p$ be the projection on the $j$th coordinate, $j=1,2$.

When $p=2$ we take  $\bar\alp=\bar\psi\tensor\bar\psi'$, where
\[
(\bar\psi,\bar\psi')=(0,0),\  (\id_{\dbZ/2},0),\  (\id_{\dbZ/2},\id_{\dbZ/2}),\ (\pr_1,\pr_2),
\]
to obtain using Proposition \ref{extensions of cup}(a)(b)(c)(e)
$\omega_0$, $\omega_1$,  $\omega_2$, $\omega_3$, respectively.

When $p\neq2$ we take $\bar\alp=(\bar\psi\tensor\bar\psi',\bar\psi)$, where
$(\bar\psi,\bar\psi')=(0,0), \ (0,\id_{\dbZ/p})$,
to obtain using Proposition \ref{extensions of cup}(a)(b) the extensions $\omega_0$, $\omega_1$,  respectively.
Also, take $\bar\alp=(\bar\psi\tensor\bar\psi',\bar\psi)$, where
$\bar\psi=\bar\psi'=\id_{\dbZ/p}$ to obtain using Corollary \ref{extensions for linearly dependent elements}
the extension $\omega_2$.
Finally, $\bar\alp=(\bar\psi\tensor\bar\psi',\bar\psi)$, where $\bar\psi=\pr_1$, $\bar\psi'=\pr_2$,
gives using  Proposition \ref{extensions for linearly independent elements} the extension $\omega_5$.
\end{proof}

\section{Lifting of homomorphisms}

We now apply the computations of the previous section to solve some specific embedding
problems.

\begin{lem}
\label{embedding problem for Bockstein}
Let $G$ be a profinite group and $\psi\colon G\to\dbZ/p$ an epimorphism.
Then $\beta_G(\psi)=0$ if and only if $\psi$ factors via the natural map $\dbZ/p^2\to\dbZ/p$.
\end{lem}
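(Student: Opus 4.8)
The statement is an ``if and only if'' about when the Bockstein obstruction to lifting $\psi$ through $\dbZ/p^2\to\dbZ/p$ vanishes, so the natural tool is Proposition \ref{Hoechsmann} (Hoechsmann) combined with the explicit extension computed in Proposition \ref{extension of Bockstein}. First I would set $M=\Ker(\psi)$, so that $\psi$ factors as $G\to G/M\xrightarrow{\bar\psi}\dbZ/p$ with $\bar\psi$ an isomorphism. Asking whether $\psi$ lifts to an epimorphism $G\to\dbZ/p^2$ compatible with the projection $\dbZ/p^2\to\dbZ/p$ is exactly the embedding problem
\[
\xymatrix{
&&&& G\ar[dl]\ar[d]^{\psi}\\
\omega_2^{\bar\psi}: & 0\ar[r] & \dbZ/p\ar[r] & \dbZ/p^2\ar[r]^{\bar\psi\inv\circ\pr} & G/M\ar[r] & 1,
}
\]
where I identify $\dbZ/p^2\to\dbZ/p$ with $\bar\psi\inv\circ\pr\colon\dbZ/p^2\to G/M$ (here $\pr\colon\dbZ/p^2\to\dbZ/p$ is the natural projection). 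By the last sentence of Proposition \ref{Hoechsmann}, a solution $\Phi\colon G\to\dbZ/p^2$ exists if and only if $\inf_G(\alp)=0$, where $\alp\in H^2(G/M,\dbZ/p)$ is the class corresponding to $[\omega_2^{\bar\psi}]$.

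The second step is to identify that class: by Proposition \ref{extension of Bockstein}, the extension $\omega_2^{\bar\psi}$ corresponds precisely to $\beta_{G/M}(\bar\psi)\in H^2(G/M)$. Hence the embedding problem is solvable exactly when $\inf_G(\beta_{G/M}(\bar\psi))=0$. Since the Bockstein homomorphism is functorial in $G$ (it commutes with inflation), $\inf_G(\beta_{G/M}(\bar\psi))=\beta_G(\inf_G(\bar\psi))=\beta_G(\psi)$. Therefore $\psi$ lifts to some $\Phi\colon G\to\dbZ/p^2$ with $\pr\circ\Phi=\psi$ (after the identification above) if and only if $\beta_G(\psi)=0$.

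The last small point is to check that ``$\psi$ factors via $\dbZ/p^2\to\dbZ/p$'' in the statement means exactly the existence of such a $\Phi$, and that we need not worry about $\Phi$ being surjective: any homomorphism $\Phi\colon G\to\dbZ/p^2$ with $\pr\circ\Phi=\psi$ automatically has image surjecting onto $\dbZ/p$ under $\pr$, hence (as $\dbZ/p^2$ has a unique maximal subgroup, which is $\Ker(\pr)$) the image is all of $\dbZ/p^2$; so factoring through $\dbZ/p^2\to\dbZ/p$ and lifting to an epimorphism onto $\dbZ/p^2$ are the same condition. I expect the only mild obstacle to be bookkeeping: making the identification of $\dbZ/p^2\to\dbZ/p$ with $\bar\psi\inv\circ\pr$ precise so that Proposition \ref{Hoechsmann} applies verbatim, and invoking functoriality of $\beta$ with respect to the quotient map $G\to G/M$ in the correct direction.
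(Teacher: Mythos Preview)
Your proof is correct and follows essentially the same approach as the paper: pass to $\bar G=G/\Ker(\psi)$, identify the relevant extension as $\omega_2^{\bar\psi}$ via Proposition~\ref{extension of Bockstein}, apply Proposition~\ref{Hoechsmann} to reduce solvability of the embedding problem to the vanishing of $\inf_G(\beta_{\bar G}(\bar\psi))=\beta_G(\psi)$, and note that any lift is automatically surjective.
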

\begin{proof}
Let $\bar G=G/\Ker(\psi)\isom\dbZ/p$ and let $\pi\colon G\to\bar G$ be the natural map.
There exists $\bar\psi\in H^1(\bar G)$ with $\psi=\bar\psi\circ\pi$ and $\inf_G(\bar\psi)=\psi$.
Then $\inf_G(\beta_{\bar G}(\bar\psi))=\beta_G(\psi)$.
By Proposition \ref{extension of Bockstein}, $\beta_{\bar G}(\bar\psi)$ corresponds to $\omega_2^{\bar\psi}$.
It follows from the last sentence of Proposition \ref{Hoechsmann} that $\beta_G(\psi)=0$ if and only if
the following embedding problem is solvable
\[
\xymatrix{
& G\ar[dl]_{\Phi}\ar[d]^{\psi}&\\
\dbZ/p^2\ar[r] & \dbZ/p\ar[r] &0.
}
\]
Note that if the homomorphism $\Phi$ exists, then it must be surjective.
\end{proof}

In the next proposition let $r,s$ be the generators of $M_{p^3}$ as in \S8.

\begin{prop}
\label{Zp2 or Mp3}
Let $p\neq2$ and let $G$ be a profinite group of Galois relation type.
Every epimorphism $\psi\colon G\to\dbZ/p$ factors through one of the epimorphisms:
\begin{enumerate}
\item[(i)]
the natural map $\dbZ/p^2\to\dbZ/p$;
\item[(ii)]
the map $\lam\tagg\colon M_{p^3}\to\dbZ/p$, defined by $r\mapsto\bar1$, $s\mapsto\bar0$.
\end{enumerate}
\end{prop}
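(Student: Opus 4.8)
The plan is to split into two cases according to whether the Bockstein element $\beta_G(\psi)\in H^2(G)$ vanishes. If $\beta_G(\psi)=0$, then Lemma~\ref{embedding problem for Bockstein} applies directly and shows that $\psi$ breaks via the natural map $\dbZ/p^2\to\dbZ/p$; this is alternative~(i), and nothing more is needed. So assume $\beta_G(\psi)\neq0$. First I would invoke Definition~\ref{Galois relation type}(ii) to choose $\xi\in H^1(G)$ with $\psi\cup\xi+\beta_G(\psi)=0$. Since $p$ is odd the cup product on $H^1(G)=H^1(G,\dbZ/p)$ is alternating, so if $\xi$ were an $\dbF_p$-multiple of $\psi$ we would get $\psi\cup\xi=0$ and hence $\beta_G(\psi)=0$, contrary to assumption; therefore $\psi$ and $\xi$ are $\dbF_p$-linearly independent. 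Put $M=\Ker(\psi)\cap\Ker(\xi)$, an open normal subgroup. Both $\psi,\xi$ are trivial on $M$, hence inflated from $\bar\psi,\bar\xi\in H^1(G/M)$; as inflation on $H^1$ is injective, $\bar\psi,\bar\xi$ remain linearly independent, so $(\bar\psi,\bar\xi)$ is an isomorphism $G/M\xrightarrow{\sim}(\dbZ/p)^2$.

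The core of the argument is to run the simple type element $\bar\alp=(\bar\psi\tensor\bar\xi,\bar\psi)\in\Omega(G/M)$ through the dictionary of \S9 and through Hoechsmann's Proposition~\ref{Hoechsmann}. Concretely, $\bar\alp$ has trivial kernel, and by Proposition~\ref{extensions for linearly independent elements} the class $\Lam_{G/M}(\bar\alp)=\bar\psi\cup\bar\xi+\beta_{G/M}(\bar\psi)\in H^2(G/M)$ corresponds to the extension $\omega_5^{(\bar\psi,\bar\xi)}$, that is, to $M_{p^3}\to G/M$ given by $\lam'$ composed with the identification $(\bar\psi,\bar\xi)\inv$. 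On the other hand, the compatibility of inflation with $\Lam$ together with the choice of $\xi$ gives
\[
\inf_G\bigl(\Lam_{G/M}(\bar\alp)\bigr)=\Lam_G(\psi\tensor\xi,\psi)=\psi\cup\xi+\beta_G(\psi)=0 .
\]
By the last sentence of Proposition~\ref{Hoechsmann}, the embedding problem for $\omega_5^{(\bar\psi,\bar\xi)}$ over the projection $G\to G/M$ is therefore solvable: there is a continuous homomorphism $\Phi\colon G\to M_{p^3}$ making the relevant square commute. Chasing that commutativity yields $\lam'\circ\Phi=(\psi,\xi)\colon G\to(\dbZ/p)^2$; composing with the first coordinate projection $\pr_1$ and noting that $\pr_1\circ\lam'=\lam\tagg$, we obtain $\lam\tagg\circ\Phi=\psi$. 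Thus $\psi$ breaks via $\lam\tagg\colon M_{p^3}\to\dbZ/p$, which is alternative~(ii); one may add that $\Phi$ is automatically surjective by Remark~\ref{subgroups of Mp3}.

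I expect the main obstacle to be the bookkeeping in this last step: one must carefully track the twisting isomorphism $(\bar\psi,\bar\xi)$ when passing between the abstract extension $\omega_5$ of $(\dbZ/p)^2$ and the extension of $G/M$ it induces, so that the solution $\Phi$ supplied by Hoechsmann's proposition really composes back to $\psi$ and not to some other character. By contrast, the observation that $\beta_G(\psi)\neq0$ forces linear independence of $\psi$ and $\xi$ — which is what places us under the hypotheses of Proposition~\ref{extensions for linearly independent elements}, and hence produces the nonabelian extension $\omega_5$ rather than the Bockstein extension $\omega_2$ — is comparatively soft.
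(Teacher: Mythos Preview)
Your proposal is correct and follows essentially the same route as the paper: split on whether $\beta_G(\psi)=0$, and in the nonvanishing case use $\xi$ from Definition~\ref{Galois relation type}(ii), linear independence of $\psi,\xi$ from the alternating cup product, Proposition~\ref{extensions for linearly independent elements} to identify the relevant extension as $\omega_5^{(\bar\psi,\bar\xi)}$, and Proposition~\ref{Hoechsmann} together with Remark~\ref{subgroups of Mp3} to solve the embedding problem surjectively. The only cosmetic difference is that the paper states the surjectivity of $\Phi$ before concluding $\psi=\pr_1\circ\lam'\circ\Phi=\lam\tagg\circ\Phi$, whereas you mention it afterward.
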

\begin{proof}
If $\beta_G(\psi)=0$, then by Lemma \ref{embedding problem for Bockstein}, $\psi$ breaks via the map (i).

Next assume that $\beta_G(\psi)\neq0$.
Since $G$ has Galois relation type, there exists $\xi\in H^1(G)$ with $\psi\cup\xi+\beta_G(\psi)=0$.
In particular, $\psi\cup\xi\neq0$.
Since $p\neq2$ and the cup product is alternate, $\psi$ and $\xi$ are $\dbF_p$-linearly independent.
Now let $\bar G=G/(\Ker(\psi)\cap\Ker(\xi))\isom(\dbZ/p)^2$, and let $\pi\colon G\to\bar G$ be the canonical map.
Take $\bar\psi,\bar\xi\in H^1(\bar G)$ with $\psi=\bar\psi\circ\pi$, $\xi=\bar\xi\circ\pi$, $\inf_G(\bar\psi)=\psi$,
and $\inf_G(\bar\xi)=\xi$.
Then
\[
\textstyle{\inf_G}(\Lam_{\bar G}(\bar\psi\tensor\bar\xi,\bar\psi))=\Lam_G(\psi\tensor\xi,\psi)=0.
\]
By Proposition \ref{extensions for linearly independent elements},
$\Lam_{\bar G}(\bar\psi\tensor\bar\xi,\bar\psi)$ corresponds to $\omega_5^{(\bar\psi,\bar\xi)}$.
It follows again from Proposition \ref{Hoechsmann} that the embedding problem
\[
\xymatrix{
& G\ar[dl]_{\Phi}\ar[d]^{(\psi,\xi)}&\\
M_{p^3}\ar[r]^{\lam'} & (\dbZ/p)^2\ar[r] &0
}
\]
is solvable.
By Remark \ref{subgroups of Mp3}, no proper subgroup of $M_{p^3}$ is mapped surjectively by $\lam'$.
Therefore $\Phi$ is surjective.
As before, let $\pr_1\colon(\dbZ/p)^2\to\dbZ/p$ be the projection on the first coordinate.
We deduce that $\psi$ breaks via the epimorphism $\pr_1\circ\lam'$, which is just $\lam''$.
\end{proof}

Next let $r,s$ be the generators of $D_4$ as in \S8.
We write $G(p)$ for the maximal pro-$p$ quotient of the profinite group $G$.
One has the following analog of Proposition \ref{Zp2 or Mp3} for $p=2$.

\begin{prop}
\label{lifting in even case}
Let $p=2$ and let $G$ be a profinite group of Galois relation type and such that $G(2)\not\isom\dbZ/2$.
Every epimorphism $\psi\colon G\to\dbZ/2$ factors via one of the epimorphisms:
\begin{enumerate}
\item[(i)]
the natural map $\dbZ/4\to\dbZ/2$;
\item[(ii)]
the map $\rho'\colon D_4\to\dbZ/2$ , defined by $r\mapsto\bar1$, $s\mapsto\bar0$;
\item[(iii)]
the map $\rho\tagg\colon D_4\to\dbZ/2$, defined by $r\mapsto\bar0$, $s\mapsto\bar1$.
\end{enumerate}
\end{prop}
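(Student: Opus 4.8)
The plan is to follow the pattern of the proof of Proposition \ref{Zp2 or Mp3}, with $\omega_3$ and $D_4$ now playing the roles that $\omega_5$ and $M_{p^3}$ played there. First, if $\beta_G(\psi)=0$, then Lemma \ref{embedding problem for Bockstein} immediately gives alternative (i): $\psi$ factors via the natural map $\dbZ/4\to\dbZ/2$. So I may assume henceforth that $\beta_G(\psi)\neq0$.

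In that case I would first record that $\dim_{\dbF_2}H^1(G)\geq2$: if $H^1(G)$ were one-dimensional then $G(2)$ would be procyclic, and since $G(2)\not\cong\dbZ/2$ and $G(2)\neq1$ it would admit $\dbZ/4$ as a quotient, so the unique nonzero $\psi\in H^1(G)$ would factor via $\dbZ/4\to\dbZ/2$ and hence $\beta_G(\psi)=0$ by Lemma \ref{embedding problem for Bockstein}, contrary to assumption. Next I would take $\xi\in H^1(G)$ as in Definition \ref{Galois relation type}(ii), noting that $\xi\neq0$ (otherwise that identity forces $\beta_G(\psi)=0$). Combining Definition \ref{Galois relation type}(ii) with Lemma \ref{formula for Bockstein when q is 2} yields, for every $\eta\in H^1(G)$,
\[
\eta\cup(\eta+\xi)=\eta\cup\eta+\eta\cup\xi=\beta_G(\eta)+\eta\cup\xi=0 .
\]
From this I would extract an $\dbF_2$-linearly independent pair $\eta_1,\eta_2\in H^1(G)$ with $\eta_1\cup\eta_2=0$: if $\xi\neq\psi$, take $(\eta_1,\eta_2)=(\psi,\,\psi+\xi)$, which is independent because $\xi\notin\{0,\psi\}$; if $\xi=\psi$, use $\dim_{\dbF_2}H^1(G)\geq2$ to choose $\eta$ independent of $\psi$ and take $(\eta_1,\eta_2)=(\eta,\,\eta+\psi)$.

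Given such a pair, I would put $M=\Ker(\eta_1)\cap\Ker(\eta_2)$, so that $\bar G:=G/M\cong(\dbZ/2)^2$, and let $\bar\eta_1,\bar\eta_2\in H^1(\bar G)$ satisfy $\inf_G(\bar\eta_i)=\eta_i$. By Proposition \ref{extensions of cup}(e), $\bar\eta_1\cup\bar\eta_2$ corresponds to $\omega_3^{(\bar\eta_1,\bar\eta_2)}$; since the inflation of its class to $H^2(G)$ equals $\eta_1\cup\eta_2=0$, Proposition \ref{Hoechsmann} supplies a homomorphism $\Phi\colon G\to D_4$ with $\theta\circ\Phi=(\eta_1,\eta_2)$, and $\Phi$ is surjective by Remark \ref{subgroups of Mp3} since $(\eta_1,\eta_2)$ is onto $(\dbZ/2)^2$. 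Composing the identity $\theta\circ\Phi=(\eta_1,\eta_2)$ with the first projection $\pr_1$ gives $\theta'\circ\Phi=\eta_1$, and composing it with $\pr_1+\pr_2$ gives $\theta''\circ\Phi=\eta_1+\eta_2$. Hence when $\xi\neq\psi$ one obtains $\psi=\eta_1=\theta'\circ\Phi$, alternative (ii), and when $\xi=\psi$ one obtains $\psi=\eta_1+\eta_2=\theta''\circ\Phi$, alternative (iii).

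I expect the only delicate point to be the degenerate case $\xi=\psi$, in which the naive choice $(\psi,\psi+\xi)$ collapses; this is exactly where the hypothesis $G(2)\not\cong\dbZ/2$ is used, to manufacture the auxiliary character $\eta$, and one must keep track that the factorization obtained there goes through $\theta''$ (the $\dbZ/4$-type quotient of $D_4$) rather than through $\theta'$. The remaining steps are routine applications of the embedding-problem machinery assembled in the preceding sections.
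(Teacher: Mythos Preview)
Your proof is correct and follows essentially the same route as the paper's: reduce to $\beta_G(\psi)\neq0$, use the identity $\eta\cup(\eta+\xi)=0$ (which the paper computes as $\psi\cup\xi+\psi\cup\psi=2\beta_G(\psi)=0$), solve the resulting $D_4$-embedding problem via Proposition~\ref{Hoechsmann} and Proposition~\ref{extensions of cup}(e), and read off the factorization through $\theta'$ or $\theta''$ according to whether $\xi\neq\psi$ or $\xi=\psi$. The only organizational difference is that the paper disposes of the procyclic case for $G(2)$ at the outset, whereas you derive $\dim_{\dbF_2}H^1(G)\geq2$ afterwards by contradiction with $\beta_G(\psi)\neq0$; the content is the same.
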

\begin{proof}
Let $\xi$ be as in Definition \ref{Galois relation type}(ii).
By the assumptions, $G(2)\neq1,\dbZ/2$.
Hence, if $G(2)$ is pro-cyclic, then $\psi$ factors via the map (i).
We may therefore assume that $G(2)$ is not pro-cyclic.

If $\beta_G(\psi)=0$, then by Lemma \ref{embedding problem for Bockstein}, $\psi$ factors
via the map (i).

Next we assume that $\psi,\xi$ are $\dbF_2$-linearly independent.
Let $\bar G=G/(\ker(\psi)\cap\Ker(\xi))$ and let $\pi\colon G\to \bar G$ be the natural map.
There exist $\bar\psi,\bar\xi\in H^1(\bar G)$ with $\psi=\bar\psi\circ\pi$, $\xi=\bar\xi\circ\pi$,
$\inf_G(\bar\psi)=\psi$ and $\inf_G(\bar\xi)=\xi$.
Note that $\Ker(\bar\psi)\cap\Ker(\bar\xi+\bar\psi)=\Ker(\bar\psi)\cap\ker(\bar\xi)=1$.
By Proposition \ref{extensions of cup}(e), $\bar\psi\cup(\bar\xi+\bar\psi)$ corresponds to the extension
$\omega_3^{(\bar\psi,\bar\xi+\bar\psi)}$.
By the choice of $\xi$ and Lemma \ref{formula for Bockstein when q is 2},
\[
\textstyle\inf_G(\bar\psi\cup(\bar\xi+\bar\psi))=\psi\cup\xi+\psi\cup\psi=\psi\cup\xi+\beta_G(\psi)=0.
\]
Proposition \ref{Hoechsmann} therefore implies that the embedding problem
\begin{equation}
\label{embed prob}
\xymatrix{
& G\ar@{->>}[ld]_{\Phi} \ar[d]^{(\psi,\xi+\psi)} &\\
 D_4 \ar[r]^{\rho} & (\dbZ/2)^2 \ar[r] & 0
}
\end{equation}
(with $\rho$ as in \S8) is solvable.
Since no proper subgroup of $D_4$ is mapped by $\rho$ surjectively onto
$(\dbZ/2)^2$ (Remark \ref{subgroups of Mp3}),  $\Phi$ is surjective.
We deduce that $\psi$ factors via the epimorphism $\pr_1\circ\rho$, which is just $\rho'$.

Finally assume that $\beta_G(\psi)\neq0$ and $\psi,\xi$ are $\dbF_2$-linearly dependent.
As $\beta_G(\psi)=\psi\cup\xi$, necessarily $\xi\neq0$.
But $\psi\neq0$, so $\psi=\xi$.

Now $G(2)$ is not pro-cyclic, so there exists $\psi'\in H^1(G)$ such that $\psi,\psi'$ are $\dbF_2$-linearly independent.
Then $\psi',\xi+\psi'$ are also $\dbF_2$-linearly independent.
By the argument above, the embedding problem (\ref{embed prob}), with $(\psi,\xi+\psi)$ replaced by $(\psi',\xi+\psi')$,
is solvable.
Composing with the map $\sig\colon(\dbZ/2)^2\to\dbZ/2$, $(\bar i,\bar j)\mapsto\overline{i+j}$, we
obtain that $\psi=\xi$ factors via $\sig\circ\rho$, which is just $\rho\tagg$.
\end{proof}

\section{The main results}

Let $G$ be a again a profinite group and $q=p$ a prime number.

\begin{thm}
\label{characterization of distinguished}
The following conditions on a normal open subgroup $N$ of $G$ are equivalent:
\begin{enumerate}
\item[(a)]
$N$ is distinguished;
\item[(b)]
\begin{enumerate}
\item[(i)]
When $p=2$, $G/N$ is isomorphic to one of the  groups
\[
1,\ \dbZ/2,\ (\dbZ/2)^2,\ \dbZ/4, \  D_4;
\]
\item[(ii)]
When $p\neq 2$, $G/N$ is isomorphic to one of the  groups
\[
1,\  \dbZ/p,\  (\dbZ/p)^2,\ \dbZ/p^2,\   M_{p^3}.
\]
\end{enumerate}
\end{enumerate}
\end{thm}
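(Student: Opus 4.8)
The plan is to deduce both implications from Proposition~\ref{distinguished and embedding problems}, which translates ``$N$ is distinguished with data $M,\bar\alp$'' into the existence of an injection $h\colon G/N\to B$ into the middle term $B$ of a central extension
\[
\omega\colon\quad 0\to\dbZ/p\to B\xrightarrow{\ g\ } G/M\to1
\]
corresponding to $\Lam_{G/M}(\bar\alp)$, such that $g\circ h$ is the canonical projection $G/N\to G/M$. Thus being distinguished amounts to $G/N$ being isomorphic to a subgroup $H\leq B$ with $g(H)=G/M$, for $B$ ranging over the middle terms that occur for simple-type $\bar\alp$ with trivial kernel; and conversely such a configuration, fed back through Proposition~\ref{summary2}, supplies the required data.

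For (a)$\Rightarrow$(b), let $N$ be distinguished with data $M,\bar\alp$. By Proposition~\ref{summary1} the class $\Lam_{G/M}(\bar\alp)$ corresponds to one of $\omega_0,\omega_1^{\bar\psi},\omega_2^{\bar\psi},\omega_3^{(\bar\psi,\bar\psi')}$ (for $p=2$) or $\omega_0,\omega_1^{\bar\psi},\omega_2^{\bar\psi},\omega_5^{(\bar\psi,\bar\psi')}$ (for $p\neq2$), so $B$ is isomorphic to one of $\dbZ/p$, $(\dbZ/p)^2$, $\dbZ/p^2$, $D_4$, $M_{p^3}$, and $H:=h(G/N)$ is a subgroup of $B$ with $g(H)=G/M$. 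Running through these five model groups: if $B\isom D_4$ or $M_{p^3}$, then $H=B$ by Remark~\ref{subgroups of Mp3}; if $B\isom\dbZ/p^2$, its unique nontrivial proper subgroup is $\Ker(g)$, so $H=B$; if $B\isom(\dbZ/p)^2$ with $g$ a coordinate projection, then $H$ is either $B$ or a line distinct from $\Ker(g)$, hence $\isom(\dbZ/p)^2$ or $\dbZ/p$; and if $B\isom\dbZ/p$ (so $G/M=1$), then $H\isom1$ or $\dbZ/p$. The possible isomorphism types of $G/N\isom H$ are therefore exactly those in list (b).

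For (b)$\Rightarrow$(a), if $G/N\isom1$ or $\dbZ/p$ then $N=\Ker(\psi)$ for a suitable $\psi\in H^1(G)$ (with $\psi=0$ in the first case), so $N$ is distinguished by Example~\ref{example of distinguished}. In the remaining cases write $B$ for the model group with $B\isom G/N$, fix an isomorphism $\phi\colon G/N\xrightarrow{\sim}B$, and let $M$ be the open normal subgroup with $N\leq M\leq G$ and $\phi(M/N)=\Ker(g)$: thus $M/N$ has order $p$ when $B\isom(\dbZ/p)^2$ or $\dbZ/p^2$, and $M/N=Z(G/N)$ when $B\isom D_4$ or $M_{p^3}$, so that $G/M\isom(\dbZ/p)^s$ with $s=1$ or $s=2$ accordingly. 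Then $\phi$ descends to an isomorphism $\bar\phi\colon G/M\xrightarrow{\sim}B/\Ker(g)$, and composing with the canonical isomorphism $B/\Ker(g)\isom(\dbZ/p)^s$ gives $\theta\colon G/M\xrightarrow{\sim}(\dbZ/p)^s$. Applying Proposition~\ref{summary2} with this $\theta$ and the matching index ($i=1,2,3,5$ according as $B\isom(\dbZ/p)^2$, $\dbZ/p^2$, $D_4$, $M_{p^3}$) produces $\bar\alp\in\Omega(G/M)$ of simple type with trivial kernel such that $\Lam_{G/M}(\bar\alp)$ corresponds to $\omega_i^\theta$; by the choice of $\theta$ the extension $\omega_i^\theta$ is equivalent to $0\to\dbZ/p\to B\xrightarrow{g^\theta}G/M\to1$ with $g^\theta\circ\phi$ the canonical projection $G/N\to G/M$. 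Hence $h=\phi$ verifies condition (b) of Proposition~\ref{distinguished and embedding problems}, and $N$ is distinguished with data $M,\bar\alp$.

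I expect the genuine content to lie entirely in \S\S8--10, which are already in place: the identification of the extensions attached to simple-type elements (Propositions~\ref{summary1} and \ref{summary2}) and, crucially, the fact that neither $D_4$ nor $M_{p^3}$ has a proper subgroup surjecting onto $(\dbZ/p)^2$ (Remark~\ref{subgroups of Mp3}) — this last point being precisely what forces these two nonabelian groups to occur only as full quotients $G/N$. The remaining obstacle is conceptual rather than computational: recognising that ``$N$ distinguished'' is the same, under the embedding-problem dictionary, as ``$G/N$ sits compatibly inside one of the five model groups''. Once that translation is set up, the theorem reduces to the elementary subgroup enumeration above together with the routine verification that the isomorphisms $\phi$, $\bar\phi$, $\theta$ are mutually compatible.
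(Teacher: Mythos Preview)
Your proof is correct and follows the same route as the paper's: both directions go through Proposition~\ref{distinguished and embedding problems} together with Propositions~\ref{summary1} and~\ref{summary2}. The paper's (a)$\Rightarrow$(b) is slightly more streamlined --- it simply observes that $G/N$ embeds in $B$ and that every subgroup of each model group $B$ already appears on the relevant list, so your surjectivity constraint $g(H)=G/M$ and the appeal to Remark~\ref{subgroups of Mp3} are not needed in that direction (though they do no harm).
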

\begin{proof}
(a)$\Rightarrow$(b):\quad
Let $N$ be distinguished and $M$, $\bar\alp$, $\varphi$ data for $N$.
Thus $N=\Ker(\varphi)$.
Set $\bar G=G/M$ and consider a central extension
\[
\omega: \qquad 0\ \to\ \dbZ/p\ \to\ B\ \to\ \bar G\ \to\ 1
\]
corresponding to $\Lam_{\bar G}(\bar\alp)$.
By Proposition \ref{distinguished and embedding problems}, $G/N$ embeds in $B$.

If $p=2$, then by Proposition \ref{summary1}, $\omega$ is equivalent to an extension of one of the
forms $\omega_0$, $\omega_1^{\bar\psi}$,  $\omega_2^{\bar\psi}$, $\omega_3^{(\bar\psi,\bar\psi')}$.
Then $G/N$ embeds in one of the groups $\dbZ/2$, $(\dbZ/2)^2$, $\dbZ/4$, $D_4$, and is therefore as in (i).

If $p\neq2$, then by Proposition \ref{summary1}, $\omega$ is equivalent to an extension
of one of the forms $\omega_0$, $\omega_1^{\bar\psi}$,  $\omega_2^{\bar\psi}$, $\omega_5^{(\bar\psi,\bar\psi')}$.
Then $G/N$ embeds in one of the groups $\dbZ/p$, $(\dbZ/p)^2$, $\dbZ/p^2$, $M_{p^3}$, and is therefore as in (ii).

\medskip

(b)$\Rightarrow$(a):\quad
By Example \ref{example of distinguished}, $G$ itself is distinguished.
We may therefore assume that $G/N$ is nontrivial.
Hence it is isomorphic to the middle group $B$ of $\omega_i$ where $i\in \{0,1,2,3\}$,
if $p=2$, and $i\in\{0,1,2,5\}$, if $p\neq2$.
Therefore there is an open normal subgroup $M$ of $G$ such that $N\leq M$ and the following diagram commutes:
\[
\xymatrix{
\omega: &  0\ar[r] &\dbZ/p\ar[r]\ar@{=}[d] & G/N\ar[r]\ar[d]_{\wr} & G/M\ar[r]\ar[d]_{\wr}^{\theta} & 1\\
\omega_i: & 0\ar[r] & \dbZ/p \ar[r] & B\ar[r] & (\dbZ/p)^s \ar[r] & 0,
}
\]
where $\theta$ is an isomorphism.
Then $\omega,\omega_i^\theta$ are equivalent.
By Proposition \ref{summary2}, $\omega_i^\theta$ corresponds to $\Lam_{G/M}(\bar\alp)\in H^2(G/M)$
for some $\bar\alp\in \Omega(G/M)$ of simple type and with trivial kernel, and therefore so does $\omega$.
We conclude from Proposition \ref{distinguished and embedding problems} that $N$ is distinguished.
\end{proof}

We deduce the following stronger form of the Main Theorem:

\begin{cor}
\label{list p odd}
Suppose that $p\neq2$ and let $G$ be a profinite group of Galois relation type.
Then $G^{(3)}$ is the intersection of all normal open subgroups $N$ of $G$
with $G/N$ isomorphic to one of $1$, $\dbZ/p^2$, $M_{p^3}$.
\end{cor}
\begin{proof}
By Theorems \ref{formal main theorem} and \ref{characterization of distinguished},
$G^{(3)}$ is the intersection of all normal open subgroups $N$ of $G$ with $G/N$
 isomorphic to one of $1$, $\dbZ/p$,   $\dbZ/p^2$,  $M_{p^3}$.
By Proposition \ref{Zp2 or Mp3}, $\dbZ/p$ can be omitted from this list.
\end{proof}

For $p=2$  Theorem \ref{formal main theorem}, Theorem \ref{characterization of
distinguished} and Proposition \ref{lifting in even case} give:

\begin{cor}
\label{list p even}
Let $p=2$ and let $G$ be of Galois relation type.
Then $G^{(3)}$ is the intersection of all normal open subgroups $N$ of $G$
such that $G/N$ is isomorphic to one of the groups
$1$, $\dbZ/2$, $\dbZ/4$, $D_4$.
\end{cor}

By Proposition \ref{Galois nature}, this extends \cite{MinacSpira96}*{Cor.\ 2.18},
which proves it for $G=G_F$, $F$  a field.
Combined with Proposition \ref{lifting in even case} it gives:

\begin{cor}
\label{short list p even}
Let $p=2$ and let $G$ be a profinite group of Galois relation type such that $G(2)\not\isom\dbZ/2$.
Then $G^{(3)}$ is the intersection of all normal open subgroups $N$ of $G$
such that $G/N$ is isomorphic to one of the groups $1$, $\dbZ/4$,  $D_4$.
\end{cor}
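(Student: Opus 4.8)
The plan is to start from Corollary \ref{list p even}, which already tells us that $G^{(3)}$ is the intersection of all normal open subgroups $N$ with $G/N$ isomorphic to one of $1$, $\dbZ/2$, $\dbZ/4$, $D_4$. The only thing to do is to show that, under the extra hypothesis $G(2)\not\isom\dbZ/2$, the subgroups $N$ with $G/N\isom\dbZ/2$ are redundant: each such $N$ already contains $\bigcap\{\Ker(\Phi)\ |\ G/\Ker(\Phi)\isom 1,\dbZ/4,D_4\}$. Equivalently, if $N$ is normal open with $G/N\isom\dbZ/2$, I must show that the corresponding epimorphism $\psi\colon G\to\dbZ/2$ kills the intersection of the kernels of all epimorphisms onto $\dbZ/4$ and $D_4$.

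First I would invoke Proposition \ref{lifting in even case}: since $G$ is of Galois relation type and $G(2)\not\isom\dbZ/2$, every epimorphism $\psi\colon G\to\dbZ/2$ factors through one of the three epimorphisms (i) $\dbZ/4\to\dbZ/2$, (ii) $\theta'\colon D_4\to\dbZ/2$ with $r\mapsto\bar1,s\mapsto\bar0$, or (iii) $\theta''\colon D_4\to\dbZ/2$ with $r\mapsto\bar0,s\mapsto\bar1$. In case (i) we get a commutative triangle $\psi = (\text{nat})\circ\Phi$ with $\Phi\colon G\to\dbZ/4$, necessarily surjective, so $\Ker(\Phi)\subseteq\Ker(\psi)=N$ and $G/\Ker(\Phi)\isom\dbZ/4$; hence $N$ already contains a member of the short list. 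In cases (ii) and (iii) we likewise get $\Phi\colon G\to D_4$ with $\theta'\circ\Phi=\psi$ (resp.\ $\theta''\circ\Phi=\psi$), and by the surjectivity clause in Proposition \ref{lifting in even case} (which in turn uses Remark \ref{subgroups of Mp3}) the map $\Phi$ is onto $D_4$; so $\Ker(\Phi)$ is a normal open subgroup with $G/\Ker(\Phi)\isom D_4$ and $\Ker(\Phi)\subseteq N$.

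Having shown that every $N$ with $G/N\isom\dbZ/2$ contains some $N'$ with $G/N'\isom\dbZ/4$ or $D_4$, the intersection over the short list $\{1,\dbZ/4,D_4\}$ is contained in the intersection over the full list $\{1,\dbZ/2,\dbZ/4,D_4\}$; the reverse inclusion is trivial since the short list is a sublist. Therefore the two intersections coincide, and by Corollary \ref{list p even} both equal $G^{(3)}$.

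The routine part is bookkeeping with the commutative triangles and the surjectivity of the lifts; the one place to be slightly careful is that in cases (ii) and (iii) one genuinely needs the surjectivity of $\Phi$ onto $D_4$ (not merely onto some subgroup) to conclude $G/\Ker(\Phi)\isom D_4$, but this is exactly what Proposition \ref{lifting in even case} delivers via Remark \ref{subgroups of Mp3}. No real obstacle remains; the corollary is essentially a repackaging of Corollary \ref{list p even} and Proposition \ref{lifting in even case}.
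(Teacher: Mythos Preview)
Your proposal is correct and follows exactly the approach the paper intends: the paper's entire proof is the one-line remark ``Combining this with Proposition \ref{lifting in even case} we further obtain'', and your writeup is a faithful and careful expansion of that combination of Corollary \ref{list p even} with Proposition \ref{lifting in even case}. Your attention to the surjectivity of the lift $\Phi$ in cases (ii) and (iii) is warranted and correctly resolved by appealing to the proof of Proposition \ref{lifting in even case} (via Remark \ref{subgroups of Mp3}); for case (i) surjectivity is automatic since no proper subgroup of $\dbZ/4$ surjects onto $\dbZ/2$.
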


\begin{rems}
\rm
(a)\quad
The converse of Corollary \ref{short list p even} also holds: if $G(2)\isom\dbZ/2$, then
$G^{(3)}$ is not an intersection as above.

\medskip

(b)\quad
Let $F$ be a field of characteristic $\neq2$ and let $G=G_F$.
Then $G(2)\isom\dbZ/2$ if and only if $F$ is a Euclidean field, i.e., the set $(F^\times)^2$
of all nonzero squares in $F$ is an ordering on $F$ (\cite{Becker74}, \cite{EfratBook}*{\S19.2}).
Therefore, by (a), the Euclidean fields are those fields for which the
group $\dbZ/2$ cannot be omitted from the list in Corollary \ref{list p even}.
\end{rems}

\section{The structure of $G/G^{(3)}$}
When $p=2$, and $G=G_F$ for a field $F$, the quotient $G/G^{(3)}$
is the \textbf{$W$-group} of $F$, studied in \cite{MinacSpira90}, \cite{MinacSpira96}, and  \cite{MaheMinacSmith04}.
It encodes much of the ``real" arithmetic structure of $F$.
We now give some restrictions on the group structure of $G/G^{(3)}$ also for $p$ odd.

\begin{prop}
\label{G/G3}
Let $G$ be a profinite group of Galois relation type with $G/G^{(3)}$ nonabelian.
\begin{enumerate}
\item[(a)]
If $p\neq2$, then $M_{p^3}$ is a quotient of $G/G^{(3)}$.
\item[(b)]
If $p=2$, then $D_4$ is a quotient of $G/G^{(3)}$.
\end{enumerate}
\end{prop}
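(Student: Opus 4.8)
The plan is to deduce both parts simultaneously from the characterization of $G^{(3)}$ as the intersection of distinguished subgroups (Theorem \ref{formal main theorem}), together with the list of quotients $G/N$ attached to distinguished $N$ (Theorem \ref{characterization of distinguished}). The key observation is that $G/G^{(3)}$ being nonabelian forces at least one distinguished subgroup $N$ with $G/N$ nonabelian, and by Theorem \ref{characterization of distinguished} the only nonabelian possibility is $M_{p^3}$ (for $p\neq2$) or $D_4=D_8$ (for $p=2$). From such an $N$ one reads off the desired quotient.

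First I would argue by contraposition: suppose that for every normal open subgroup $N$ of $G$ with $G/N$ nonabelian, $N$ is \emph{not} distinguished. By Theorem \ref{characterization of distinguished}, every distinguished $N$ then has $G/N$ abelian, i.e.\ $G/N$ is one of $1,\dbZ/p,(\dbZ/p)^2,\dbZ/p^2$ (resp.\ $1,\dbZ/2,(\dbZ/2)^2,\dbZ/4$ when $p=2$); in particular $G^{(2)}\leq N$ implies $G/N$ is a quotient of the abelian group $G/G^{(2)}$, but more to the point each such $G/N$ is abelian, hence contains its own commutator subgroup in the kernel, i.e.\ $[G,G]\leq N$. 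Taking the intersection over all distinguished $N$ and invoking Theorem \ref{formal main theorem}, we get $[G,G]\leq \Del_G=G^{(3)}$, so $G/G^{(3)}$ is abelian, contrary to hypothesis. Hence there exists a distinguished $N_0$ with $G/N_0$ nonabelian, and by Theorem \ref{characterization of distinguished} we have $G/N_0\isom M_{p^3}$ if $p\neq2$ and $G/N_0\isom D_4$ if $p=2$.

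It remains to see that $M_{p^3}$ (resp.\ $D_4$) is a quotient of $G/G^{(3)}$, not merely of $G$. For this I would use Remark \ref{comments on Mp3 D8}(b): for $B=M_{p^3}$ (resp.\ $B=D_4$) one has $B^{(3)}=1$. Since $N_0\supseteq G^{(3)}$ — indeed $G^{(3)}\leq\Del_G\leq N_0$ by Proposition \ref{remarks on Del} — the projection $G\to G/N_0\isom B$ factors through $G/G^{(3)}$, giving an epimorphism $G/G^{(3)}\to B$. Thus $M_{p^3}$ (resp.\ $D_8$) is a quotient of $G/G^{(3)}$, as claimed.

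The only genuinely delicate point is the contrapositive step, where one must be sure that "every distinguished quotient is abelian" really yields $[G,G]\subseteq G^{(3)}$; this is immediate once one notes that for an abelian quotient $G/N$ one has $[G,G]\subseteq N$, and then intersects. Everything else is a direct appeal to results already established: Theorem \ref{formal main theorem}, Theorem \ref{characterization of distinguished}, Proposition \ref{remarks on Del}, and Remark \ref{comments on Mp3 D8}(b). I expect no serious obstacle; the proof should be short.
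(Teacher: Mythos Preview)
Your proposal is correct and follows essentially the same route as the paper's proof: both argue that if every relevant quotient $G/N$ were abelian then $G/G^{(3)}$ would be abelian, conclude that some $N$ has $G/N\isom M_{p^3}$ (resp.\ $D_4$), and then factor through $G/G^{(3)}$. The only cosmetic differences are that the paper cites Corollaries~\ref{list p odd}/\ref{list p even} rather than Theorems~\ref{formal main theorem} and~\ref{characterization of distinguished} directly, and obtains the factoring via the functoriality $h(G^{(3)})\subseteq\bar G^{(3)}=1$ rather than via $G^{(3)}\leq\Del_G\leq N_0$; both arguments are equally short and valid.
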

\begin{proof}
By our assumption, $G^{(3)}$ cannot be an intersection of open normal subgroups $N$ of $G$ with $G/N$ abelian.
When $p\neq2$ (resp., $p=2$) Corollary \ref{list p odd}
(resp., Corollary \ref{list p even}) yields an open normal subgroup
$N$ of $G$ with $G/N\isom M_{p^3}$ (resp., $G/N\isom D_4$).
The natural epimorphism $h\colon G\to \bar G=G/N$ maps $G^{(3)}$ to $\bar G^{(3)}$, which is trivial by Remark \ref{comments on Mp3 D8}(b).
Hence $h$ induces an epimorphism $\bar h\colon G/G^{(3)}\to\bar G$.
\end{proof}

We recover the following known ``automatic realizations":

\begin{cor}
Suppose that $F$ is a field of characteristic $\neq p$ and containing a root of unity of order $p$.
\begin{enumerate}
\item[(a)]
$($\cite{Brattstrom89}$)$
If $p\neq2$ and $H_{p^3}$ is realizable as  a Galois group over $F$,
then $M_{p^3}$ is also realizable as a Galois group over $F$.
\item[(b)]
$($\cite{MinacSmith91}*{Prop.\ 2.1}$)$
If $p=2$ and $Q_8$ is realizable as a Galois group over $F$, then $D_4$ is also realizable over $F$.
\end{enumerate}
\end{cor}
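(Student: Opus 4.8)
The corollary is an immediate consequence of Proposition~\ref{G/G3} once we observe that realizability of a finite group $\bar G$ as a Galois group over $F$ is the same as $\bar G$ being a quotient of the absolute Galois group $G=G_F$, and that $G_F$ has Galois relation type by Proposition~\ref{Galois nature}. So the only thing to check is that the hypothesis ``$H_{p^3}$ (resp.\ $Q_8$) is realizable over $F$'' forces $G/G^{(3)}$ to be nonabelian, whereupon Proposition~\ref{G/G3} produces $M_{p^3}$ (resp.\ $D_4$) as a quotient of $G/G^{(3)}$, hence as a quotient of $G_F$, i.e.\ as a Galois group over $F$.

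\begin{proof}
Put $G=G_F$, which has Galois relation type by Proposition~\ref{Galois nature}.

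(a)\quad Suppose $H_{p^3}$ is realizable as a Galois group over $F$, i.e.\ there is an open normal subgroup $N$ of $G$ with $G/N\isom H_{p^3}$. Let $h\colon G\to H_{p^3}$ be the corresponding epimorphism. By Remark~\ref{comments on Mp3 D8}(b) one has $H_{p^3}^{(3)}=1$, so $h$ maps $G^{(3)}$ into $H_{p^3}^{(3)}=1$ and therefore factors through an epimorphism $G/G^{(3)}\twoheadrightarrow H_{p^3}$. Since $H_{p^3}$ is nonabelian, $G/G^{(3)}$ is nonabelian. By Proposition~\ref{G/G3}(a), $M_{p^3}$ is a quotient of $G/G^{(3)}$, hence of $G=G_F$. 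Thus $M_{p^3}$ is realizable as a Galois group over $F$.

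(b)\quad Suppose $Q_8$ is realizable as a Galois group over $F$, and let $h\colon G\to Q_8$ be a corresponding epimorphism. Again $Q_8^{(3)}=1$ by Remark~\ref{comments on Mp3 D8}(b), so $h$ induces an epimorphism $G/G^{(3)}\twoheadrightarrow Q_8$, and $G/G^{(3)}$ is nonabelian. By Proposition~\ref{G/G3}(b), $D_4=D_8$ is a quotient of $G/G^{(3)}$, hence of $G_F$, so $D_4$ is realizable as a Galois group over $F$.
\end{proof}

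There is essentially no obstacle here: the substantive content has already been packaged into Proposition~\ref{G/G3}, whose proof in turn rests on Corollaries~\ref{list p odd} and~\ref{list p even}. The only point requiring a word of care is the passage ``nonabelian quotient $\Rightarrow$ $G/G^{(3)}$ nonabelian,'' which is exactly the statement that $G^{(3)}$ contains the commutator subgroup of $G$ minus the obstruction — but concretely it is handled, as above, by noting that the relevant group of order $p^3$ has trivial third term in its own descending $p$-central sequence, so the epimorphism onto it descends to $G/G^{(3)}$.
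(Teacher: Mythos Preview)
Your proof is correct and follows essentially the same approach as the paper's own proof: both note that $G=G_F$ has Galois relation type, use Remark~\ref{comments on Mp3 D8}(b) to see that $H_{p^3}^{(3)}=1$ (resp.\ $Q_8^{(3)}=1$), deduce that the given nonabelian group is a quotient of $G/G^{(3)}$, and then invoke Proposition~\ref{G/G3}. Your version is slightly more verbose but the logical structure is identical.
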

\begin{proof}
When $p\neq2$ (resp., $p=2$), take $\bar G=H_{p^3}$ (resp., $\bar G=Q_8$).
Thus $\bar G$ is a quotient of $G=G_F$, and as $\bar G^{(3)}=1$ (by Remark \ref{comments on Mp3 D8}(b)), also of $G/G^{(3)}$.
Hence $G/G^{(3)}$ is nonabelian.
Now apply Proposition \ref{G/G3}.
\end{proof}

The next fact was earlier proved in \cite{BensonLemireMinacSwallow07}*{Th.\ A.3} when $G=G_F$ for a field $F$ containing a primitive $p$th root of unity.

\begin{prop}
\label{elements of order p}
Let $p\neq2$ and let $G$ be a profinite group of Galois relation type.
Every element of $G/G^{(3)}$ of order $p$ belongs to $G^{(2)}/G^{(3)}$.
\end{prop}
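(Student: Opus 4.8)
The plan is to translate the statement into the language of distinguished subgroups and embedding problems, which is exactly what the preceding sections have built up. Let $\pi\colon G\to G/G^{(3)}$ be the natural map, and suppose $gG^{(3)}$ has order $p$ but does not lie in $G^{(2)}/G^{(3)}$. Since $G^{(2)}\leq G^{(3)}$ fails (in general $G^{(3)}\leq G^{(2)}$), the image $\bar g$ of $g$ in $G/G^{(2)}$ is nontrivial, so there is $\psi\in H^1(G)$ with $\psi(g)\neq 0$, and after scaling we may take $\psi(g)=\bar 1$; thus $\psi\colon G\to\dbZ/p$ is an epimorphism. This is the point where Proposition \ref{Zp2 or Mp3} enters.

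First I would apply Proposition \ref{Zp2 or Mp3} to $\psi$: it factors either through the natural map $\dbZ/p^2\to\dbZ/p$ (via some epimorphism $\Phi\colon G\to\dbZ/p^2$), or through $\lam''\colon M_{p^3}\to\dbZ/p$ (via some epimorphism $\Phi\colon G\to M_{p^3}$, where $\lam''$ sends $r\mapsto\bar 1$, $s\mapsto\bar 0$). In either case, by Remark \ref{comments on Mp3 D8}(b), the codomain $B$ of $\Phi$ satisfies $B^{(3)}=1$, so $\Phi$ kills $G^{(3)}$ and induces $\bar\Phi\colon G/G^{(3)}\to B$. Now the key numerical point: in $\dbZ/p^2$, the only element of order dividing $p$ that maps to $\bar 1$ under $\dbZ/p^2\to\dbZ/p$ — there is none, since any preimage of $\bar 1$ has order $p^2$. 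Similarly, in $M_{p^3}$, an element mapping to $\bar 1$ under $\lam''$ must (by the explicit description $r^js^i$ in Remark \ref{comments on Mp3 D8}(c)) have the form $s^i r^j$ with $p\nmid j$, and such elements have order $p^2$, not $p$ (again by the power formula in Remark \ref{comments on Mp3 D8}(c), $(s^ir^j)^p=1$ iff $p\mid j$). Hence $\bar\Phi(gG^{(3)})$ has order $p^2$ in $B$, contradicting that $gG^{(3)}$ has order $p$.

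So the argument is: pick $\psi$ detecting $g$ outside $G^{(2)}$, lift it to an epimorphism onto $\dbZ/p^2$ or $M_{p^3}$ using Proposition \ref{Zp2 or Mp3}, push $g$ forward, and observe that any element of the codomain lying over a generator of $\dbZ/p$ has order exactly $p^2$; since the pushforward of $g$ has order $\leq p$, this is absurd. The only mild subtlety — and the place I'd be most careful — is the second case ($M_{p^3}$): one must check that $\Phi(g)$ genuinely maps to a generator of $\dbZ/p$ under $\lam''\circ$(the quotient that expresses how $\psi$ factors), i.e.\ that the factorization $\psi = \lam''\circ\Phi$ is compatible with $\psi(g)=\bar 1$, forcing $\Phi(g)\notin\langle s\rangle\cdot\langle r^p\rangle$, hence $\Phi(g)$ has the form $s^ir^j$ with $p\nmid j$. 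This is immediate from $\lam''(s^ir^j)=\bar j$. The first case is entirely routine. I expect no real obstacle beyond bookkeeping with the presentation of $M_{p^3}$; the substance is already packaged in Proposition \ref{Zp2 or Mp3} and Remark \ref{comments on Mp3 D8}.
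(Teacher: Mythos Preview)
Your argument is correct and follows essentially the same route as the paper: factor $\psi$ through $\dbZ/p^2$ or $M_{p^3}$ via Proposition \ref{Zp2 or Mp3}, descend to $G/G^{(3)}$ using Remark \ref{comments on Mp3 D8}(b), and use Remark \ref{comments on Mp3 D8}(c) to see that elements of order $p$ in the target lie over $\bar 0$. The only difference is cosmetic---you argue by contradiction from a single $g$ and $\psi$, while the paper shows directly that every $\bar\psi$ kills every order-$p$ element and then intersects kernels.
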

\begin{proof}
It suffices to show that the elements of order $p$ in $G/G^{(3)}$ are in the kernel of every epimorphism $\bar\psi\colon G/G^{(3)}\to \dbZ/p$.
Now $\bar\psi$ lifts to a unique epimorphism $\psi\colon G\to\dbZ/p$.
By Proposition  \ref{Zp2 or Mp3}, $\psi$ breaks via an epimorphism $\pi\colon\bar G\to\dbZ/p$,
where either $\bar G=\dbZ/p^2$ and $\pi$ is the natural projection, or $\bar G=M_{p^3}$ and $\pi=\lam\tagg$
(where $\lam\tagg$ maps the generators $r,s$ of $M_{p^3}$ to $\bar1,\bar0$, respectively).
In both cases, $\bar G^{(3)}=1$, by Remark \ref{comments on Mp3 D8}(b) again.
Therefore there is a commutative triangle
\[
\xymatrix{
&G/G^{(3)} \ar@{->>}[d]^{\bar\psi}\ar@{->>}[dl]\\
\bar G \ar@{->>}[r]^{\pi} & \dbZ/p.\\
}
\]
In both cases $\pi$ is trivial on elements of $\bar G$ of order $p$
(for $\bar G=M_{p^3}$ this follows from Remark \ref{comments on Mp3 D8}(c)).
The claim follows.
\end{proof}

\begin{rem}  \rm
Proposition \ref{elements of order p} is no longer true when $p=2$.
For instance, $G=\dbZ/2(\isom G_\dbR)$ has Galois relation type, yet $G/G^{(3)}=\dbZ/2$ and $G^{(2)}/G^{(3)}=1$.
More generally, take $G=G_F$ for a field $F$ of characteristic $\neq2$.
Then $G/G^{(3)}$ contains an involution which is not in $G^{(2)}/G^{(3)}$ if and only if $F$ is
formally real \cite{MinacSpira90}*{Th.\ 2.7}.
\end{rem}

\begin{exam}
\label{non W-groups}
\rm
Suppose that $p\neq2$ and that $G$ has Galois relation type.
By Proposition \ref{elements of order p}, $G/G^{(3)}$ cannot be isomorphic to $(\dbZ/p)^I$,
with $I\neq\emptyset$ to $H_{p^3}$, nor to $M_{p^3}$ (see Remark \ref{comments on Mp3 D8}(c)).
\end{exam}

\begin{rem}  \rm
By the celebrated Artin--Schreier theorem, an absolute Galois group of a field
is either $1$, $\dbZ/2$, or is infinite.
Our results provide a new cohomological proof of this fact in characteristic $0$, as follows.

Assume that $F$ is a field of characteristic $0$ with $G=G_F$ finite.
If $G\isom\dbZ/p$ with $p\neq2$, then $F$ contains a primitive $p$th root of unity.
By Proposition \ref{Galois nature}, $G$ has Galois relation type,
contrary to what we have seen in Example \ref{non W-groups}.
This shows that $G$ is a finite $2$-group.

Next suppose that $G$ contains an element of order $4$.
We may then assume that $G\isom\dbZ/4$.
Let $K$ be its unique subgroup of order $2$ and write $H^1(K)=\{0,\psi\}$.
The map $\res_K\colon H^1(G)\to H^1(K)$ is trivial.
Hence the Kummer element $\kappa_2(-1)\in H^1(K)$ (which comes from $H^1(G)$)
is zero.
By Proposition \ref{Bockstein and Kummer}, $\beta_K(\psi)=\psi\cup\kappa_2(-1)=0$.
On the other hand, there are no epimorphisms $K\to\dbZ/4$,
contrary to Lemma \ref{embedding problem for Bockstein}.

Hence, $G$ consists of involutions.
By Proposition \ref{lifting in even case}, $G\isom 1,\dbZ/2$.
\end{rem}

\section{Examples}
We first give examples showing that none of the groups listed in Corollaries \ref{list p even}
and \ref{list p odd} can be omitted from these lists.

\begin{exam} \rm
Taking $G=G_\dbC=1$ we see that the trivial group cannot be removed from the above lists.
\end{exam}

\begin{exam}  \rm
For $p=2$ and $G=G_\dbR=\dbZ/2$ one has $G^{(3)}=1$.
This shows that $\dbZ/2$ cannot be removed from the list in Corollary \ref{list p even}.
\end{exam}

\begin{exam}  \rm
Let $F$ be a finite field, and let $G=G_F=\hat\dbZ$.
Then $G/G^{(3)}\isom\dbZ/p^2$.
Therefore $\dbZ/p^2$ cannot be removed from the lists.
\end{exam}

\begin{exam}  \rm
Take $p=2$ and $F=\dbR((t))$.
Then $G=G_F=\langle\tau,\eps\ |\ \eps^2=(\tau\eps)^2=1\rangle$
\cite{EfratBook}*{\S22.1}.
There is an epimorphism $G\to D_4$, $\tau\mapsto r$, $\eps\mapsto s$ (with notation as in \S8).
Hence $G/G^{(3)}$ is non-abelian.
Now let $N_0$ be the intersection of all closed normal subgroups $N$ of $G$
such that $G/N$ is isomorphic to one of  $1$, $\dbZ/2$,and  $\dbZ/4$.
Then $G/N_0$ is abelian (in fact, it is isomorphic to $(\dbZ/2)^2$).
Consequently, $N_0\neq G^{(3)}$.
Therefore $D_4$ cannot be removed from the list in Corollary \ref{list p even}.
\end{exam}

\begin{exam}  \rm
Let $p\neq2$.
Dirichlet's theorem on primes in arithmetical progressions yields $n\geq0$ with
$l=p(pn+1)+1$ prime.
Let $\zeta_{p^2}$ be in the algebraic closure of $\dbF_l$.
Then $\dbF_l$ contains the $p$th roots of unity, but does not contain a $p^2$th root of unity $\zeta_{p^2}$.
Therefore the maximal pro-$p$ Galois group $G_{\dbF_l}(p)$ has a generator $\bar\sig$
such that $\bar\sig(\zeta_{p^2})=\zeta_{p^2}^{1+p}$.
Lift $\bar\sig$ to some $\sig\in G=G_{\dbQ_l}(p)$.
Also let $\tau$ be a generator of the inertia group of $G$.
Then $G$ is generated by $\tau$ and $\sig$, subject to the defining relation
$\sig\tau\sig\inv=\tau^{1+p}$  \cite{EfratBook}*{Example 22.1.6}.

Now let $N_0$ be the intersection of all closed normal subgroups $N$ of $G$
with $G/N$ isomorphic to $1$ or $\dbZ/p^2$.
Then $G/N_0$ is abelian.
Since there is an epimorphism $G\to M_{p^3}$, $\tau\mapsto r$, $\sig\mapsto s$ (notation as in \S8),  $G/G^{(3)}$ is non-abelian, so  $N_0\neq G^{(3)}$
(in fact, $G/N_0\isom(\dbZ/p^2)\times (\dbZ/p)$ while $G/G^{(3)}\isom (\dbZ/p^2)\rtimes(\dbZ/p^2)=
\langle\tilde\tau\rangle\rtimes\langle\tilde\sig\rangle$,
with action $\tilde\sig\tilde\tau\tilde\sig\inv=\tilde\tau^{1+p}$).
Thus $M_{p^3}$ cannot be removed from the list in Corollary \ref{list p odd}.
\end{exam}

Our final two examples show that in Corollaries \ref{list p even} and \ref{list p odd} one cannot
omit the assumption that $G$ has Galois relation type.

\begin{exam}  \rm
\label{false for Q8}
Let $p=2$ and let $G=Q_8$.
Then $G$ has no normal subgroups $N$ with $G/N\isom\dbZ/4$ or $G/N\isom D_4$, and
has three distinct normal subgroups $N$ with $G/N\isom \dbZ/2$, all containing the center $Z(G)$.
Thus the intersection of all normal subgroups $N$ of $G$ as in Corollary \ref{list p even}
is $Z(G)(\isom\dbZ/2)$.
On the other hand, $G^{(3)}=1$ (Remark \ref{comments on Mp3 D8}(b)).
\end{exam}

\begin{exam}  \rm
\label{false for Zp, Hp3}
Let $p\neq2$ and let $G=\dbZ/p$ or $G=H_{p^3}$.
Then $G$ has no quotients isomorphic to $\dbZ/p^2$ or $M_{p^3}$.
Thus the intersection in Corollary \ref{list p odd} is $G$ itself.
But by Remark \ref{comments on Mp3 D8}(b), $G^{(3)}=1$.
\end{exam}

In this respect, the Main Theorem is a genuine structural result about absolute Galois groups.

\begin{rem}
\label{cohomological computations}
\rm
In view of Corollaries \ref{list p even} and \ref{list p odd}, the previous two
examples show that $Q_8$ (when $p=2$) and $\dbZ/p$, $H_{p^3}$ (when $p\neq2$) do not have Galois relation type.
This can be seen directly as follows.

For $G=Q_8$ and $p=2$, one has a graded ring isomorphism
\[
H^*(Q_8)\isom\dbF_2[x,y,z]/(x^2+xy+y^2,x^2y+xy^2),
\]
where $x,y,z$ have degrees $1,1,4$, respectively (see \cite{Adem97}*{p.\ 811, Example},
\cite{AdemMilgram04}*{Ch.\ IV, Lemma 2.10}).
In this ring, no product of nonzero elements of degree $1$ vanishes, yet $x^2+xy+y^2=0$.
Hence condition (i) of Definition \ref{cup} is not satisfied for $G=Q_8$.

For $G=\dbZ/p$ and $p\neq2$ one has
$H^*(G)\isom\dbF_p[x,y]/(x^2)$,
where $x,y$ have degrees $1,2$, respectively, and (with the obvious abuse of notation)
$\beta_G(x)=y$ \cite{Evens91}*{\S3.2}.
Here $\cup\colon H^1(G)\times H^1(G)\to H^2(G)$ is the zero map, but $\beta_G(x)\neq0$.
Hence  (ii) of Definition \ref{cup} is not satisfied.

For $G=H_{p^3}$ and $p\neq2$, the structure of $H^*(G)$ is considerably more complicated,
and was computed by Leary \cite{Leary92}*{Th.\ 6 and Th.\ 7}.
Here as well, $\cup\colon H^1(G)\times H^1(G)\to H^2(G)$ is the zero map, but $\beta_G$
is nontrivial.
Therefore condition (ii) of Definition \ref{cup} is not satisfied.
\end{rem}

\begin{bibdiv}
\begin{biblist}[\resetbiblist{BLMS07}]

\bib{Adem97}{article}{
author={Adem, Alejandro},
title={Recent developments in the cohomology of finite groups},
journal={Notices of the AMS},
volume={44},
date={1997},
pages={806\ndash812},
}

\bib{AdemKaraMinac99}{article}{
    author={Adem, Alejandro},
    author={Karagueuzian, Dikran B.},
    author={Min\'a\v c, J\'an},
     title={On the cohomology of Galois groups determined by Witt rings},
   journal={Adv. Math.},
    volume={148},
      date={1999},
     pages={105\ndash160},
}

\bib{AdemMilgram04}{book}{
   author={Adem, Alejandro},
   author={Milgram, R. James},
   title={Cohomology of Finite Groups},
   edition={2},
   publisher={Springer-Verlag},
   place={Berlin},
   date={2004},
   pages={viii+324},
}

\bib{Becker74}{article}{
    author={Becker, Eberhard},
     title={Euklidische K\"orper und euklidische H\"ullen von K\"orpern},
   journal={J. reine angew. Math.},
    volume={268/269},
      date={1974},
     pages={41\ndash 52},
}

\bib{BensonLemireMinacSwallow07}{article}{
author={Benson, David J.},
author={Lemire, Nicole},
author={Min\'a\v c, J\'an},
author={Swallow, John},
title={Detecting pro-p-groups that are not absolute Galois groups},
journal={J.\ reine angew.\ Math.},
volume={613},
date={2007},
pages={175\ndash191},
}

\bib{Brattstrom89}{article}{
author={Brattstr{\"o}m, Gudrun},
   title={On $p$-groups as Galois groups},
   journal={Math. Scand.},
   volume={65},
   date={1989},
   pages={165--174},
}

\bib{CartanEilenberg56}{book}{
  author={Cartan, Eli},
  author={Eilenberg, Samuel},
  title={Homological Algebra},
  publisher={Princeton University Press},
  place={Princeton},
  date={1956},
}

\bib{Chapman82}{article}{
author={Chapman, G.R.},
title={The Cohomology Ring of a Finite Abelian Group},
journal={Proc. London Math. Soc.},
volume={45},
date={1982},
pages={564\ndash 576},
}

\bib{CheboluEfratMinac}{article}{
author={Chebolu, Sunil K.},
author={Efrat, Ido},
author={Min\' a\v c, J\'an},
title={Quotients of absolute Galois groups which determine the entire Galois cohomology},
journal={Math.\ Ann.},
doi={10.1007/s00208-011-0635-6},
}

\bib{EfratBook}{book}{
   author={Efrat, Ido},
   title={Valuations, Orderings, and Milnor $K$-theory},
   series={Mathematical Surveys and Monographs},
   volume={124},
   publisher={American Mathematical Society},
   place={Providence, RI},
   date={2006},
   pages={xiv+288},
}

\bib{Evens91}{book}{
 author={Evens, Leonard},
   title={The Cohomology of Groups},
   series={Oxford Mathematical Monographs},
   publisher={The Clarendon Press Oxford University Press},
   place={New York},
   date={1991},
   pages={xii+159},
}

\bib{Frohlich85}{article}{
author={Fr{\"o}hlich, A.},
   title={Orthogonal representations of Galois groups, Stiefel-Whitney
   classes and Hasse-Witt invariants},
   journal={J. Reine Angew. Math.},
   volume={360},
   date={1985},
   pages={84--123},
}

\bib{GilleSzamuely}{book}{
 author={Gille, Philippe},
   author={Szamuely, Tam{\'a}s},
   title={Central Simple Algebras and Galois Cohomology},
   publisher={Cambridge University Press},
   place={Cambridge},
   date={2006},
   pages={xii+343},
}

\bib{Hoechsmann68}{article}{
  author={Hoechsmann, Klaus},
  title={Zum Einbettungsproblem},
  journal={J. reine angew. Math.},
  volume={229},
  date={1968},
  pages={81\ndash106},
}

\bib{Koch02}{book}{
    author={Koch, H.},
     title={Galois Theory of $p$-Extensions},
 publisher={Springer},
     place={Berlin},
      date={2002},
}

\bib{Ledet05}{book}{
  author={Ledet, Arne},
  title={Brauer Type Embedding Problems},
  series={Fields Institute Monographs},
  volume={21},
  publisher={American Mathematical Society},
  place={Providence, RI},
  date={2005},
  pages={viii+171},
}

\bib{Leary92}{article}{
author={Leary, I. J.},
title={The mod-$p$ cohomology rings of some $p$-groups},
journal={Math.\ Proc.\ Camb.\ Phil.\ Soc.},
date={1992},
volume={112},
pages={63\ndash75},
}

\bib{MaheMinacSmith04}{article}{
   author={Mah\'e, Louis},
   author={Min\'a\v c, J\'an},
   author={Smith, Tara L.},
   title={Additive structure of multiplicative subgroups of fields and Galois theory},
   journal={Doc. Math.},
   volume={9},
   date={2004},
   pages={301-355},
}

\bib{MassyNguyenQuangDo77}{article}{
  author={Massy, Richard},
  author={Nguyen-Quang-Do, Thong},
  title={Plongement d'une extension de degr\'e $p\sp {2}$ dans une surextension non ab\'elienne de degr\'e $p\sp {3}$: \'etude locale-globale},
  journal={J. reine angew. Math.},
  volume={291},
  date={1977},
  pages={149\ndash161},
label={MN77}
}

\bib{Massy87}{article}{
   author={Massy, Richard},
   title={Construction de $p$-extensions galoisiennes d'un corps de caract\'eristique diff\'erente de $p$},
   journal={J. Algebra},
   volume={109},
   date={1987},
   pages={508--535},
}

\bib{MerkurjevSuslin82}{article}{
    author={Merkurjev, A. S.},
    author={Suslin, A. A.},
     title={$K$-cohomology of Severi-Brauer varieties and the norm residue homomorphism},
  language={Russian},
   journal={Izv. Akad. Nauk SSSR Ser. Mat.},
    volume={46},
      date={1982},
     pages={1011\ndash 1046},
    translation={
        journal={Math. USSR Izv.},
         volume={21},
           date={1983},
   pages={307\ndash 340},
},
label={MeSu82},
}

\bib{MinacSmith91}{article}{
    author={Min\'a\v c, J\'an},
    author={Smith, Tara L.},
     title={A characterization of $C$-fields via Galois groups},
   journal={J. Algebra},
    volume={137},
      date={1991},
     pages={1\ndash11},
}

 \bib{MinacSpira90}{article}{
   author={Min{\'a}{\v{c}}, J{\'a}n},
   author={Spira, Michel},
   title={Formally real fields, Pythagorean fields, $C$-fields and $W$-groups},
   journal={Math. Z.},
   volume={205},
   date={1990},
   pages={519--530},
    label={MSp90}
}

\bib{MinacSpira96}{article}{
  author={Min{\'a}{\v {c}}, J{\'a}n},
  author={Spira, Michel},
  title={Witt rings and Galois groups},
  journal={Ann. of Math. (2)},
  volume={144},
  date={1996},
  pages={35\ndash60},
  label={MSp96},
}

\bib{NeukirchSchmidtWingberg}{book}{
  author={Neukirch, J{\"u}rgen},
  author={Schmidt, Alexander},
  author={Wingberg, Kay},
  title={Cohomology of Number Fields, Second edition},
  publisher={Springer},
  place={Berlin},
  date={2008},
}

\bib{SerreLF}{book}{
    author={Serre, Jean-Pierre},
     title={Local fields},
    series={Graduate Texts in Mathematics},
    volume={67},
 publisher={Springer-Verlag},
     place={New York},
      date={1979},
}

\bib{TignolAmitsur85}{article}{
author={Tignol, J.-P.},
author={Amitsur, Shimshon A.},
title={Kummer subfields of Malcev--Neumann division algebras},
journal={Israel J.\ Math.},
volume={50},
date={1985},
pages={114\ndash144},
}

\bib{Tignol86}{article}{
author={Tignol, J.-P.},
title={Cyclic and elementary abelian subfields of Malcev--Neumann division algebras},
journal={J.\ Pure Appl.\ Algebra},
volume={42},
date={1986},
pages={199\ndash220},
}

\bib{Townsley88}{thesis}{
author={Townsley-Kulich, Lisa},
title={Investigations of the integral cohomology ring of a finite group},
type={Ph.D.\ thesis},
place={Northwestern University},
date={1988},
}

\bib{Villegas88}{thesis}{
  author={Villegas, Fernando Rodriguez},
  title={Relations between quadratic forms and certain Galois extensions},
  date={1988},
  organization={Ohio State University},
  type={a manuscript},
  eprint={http://www.math.utexas.edu/users/villegas/osu.pdf},
}

\end{biblist}
\end{bibdiv}

\end{document}